\colorlet{dgrmred}{red!60!white}
\colorlet{dgrmblu}{blue!60!white}
\colorlet{dgrmpur}{Plum!60!white}
\colorlet{dgrmmag}{magenta!60!white}
\colorlet{dgrmgrn}{green!60!white}
\colorlet{dgrmyel}{yellow!60!white}
\theoremstyle{plain}
\newtheorem{thm}{Theorem}[section]
\newtheorem*{thm*}{Theorem}
\newtheorem{thmletter}{Theorem}[section]
\newtheorem{lem}[thm]{Lemma}
\newtheorem*{conj*}{Conjecture}
\newtheorem{cor}[thm]{Corollary}
\newtheorem*{cor*}{Corollary}
\newtheorem{corletter}[thmletter]{Corollary}
\newtheorem{prop}[thm]{Proposition}
\theoremstyle{definition}
\newtheorem{defn}[thm]{Definition}
\theoremstyle{remark}
\newtheorem{rem}[thm]{Remark}
\newcommand{\defnemph}[1]{\emph{#1}}
\newcommand{\iso}{\cong}
\newcommand{\NN}{\mathbb{N}}
\newcommand{\ZZ}{\mathbb{Z}}
\newcommand{\QQ}{\mathbb{Q}}
\newcommand{\CC}{\mathbb{C}}
\newcommand{\field}{\Bbbk}
\newcommand{\TL}{\mathrm{TL}}
\newcommand{\JW}{\mathrm{JW}}
\newcommand{\sred}{{\color{red} s}}
\newcommand{\tblu}{{\color{blue} t}}
\newcommand{\upur}{{\color{Plum} u}}
\newcommand{\vgrn}{\color{OliveGreen} v}
\newcommand{\qbinom}[2]{\genfrac{[}{]}{0pt}{}{#1}{#2}}
\DeclareMathOperator{\Hom}{Hom}
\DeclareMathOperator{\Frac}{Frac}
\DeclareMathOperator{\pTr}{pTr}
\DeclareMathOperator*{\coeff}{coeff}
\begin{document}
\title[Two-colored Jones--Wenzl projectors]{Existence and rotatability of the two-colored Jones--Wenzl projector}
\author{Amit Hazi}
\address{Department of Mathematics\\
      University of York\\
      York\\ 
      YO10 5DD\\
      United Kingdom}
\email{amit.hazi@york.ac.uk}

\begin{abstract}
The two-colored Temperley--Lieb algebra $2\TL_R(\prescript{}{s}{n})$ is a generalization of the Temperley--Lieb algebra. 
The analogous two-colored Jones--Wenzl projector $\JW_R(\prescript{}{s}{n}) \in 2\TL_R(\prescript{}{s}{n})$ plays an important role in the Elias--Williamson construction of the diagrammatic Hecke category.
We give conditions for the existence and rotatability of $\JW_R(\prescript{}{s}{n})$ in terms of the invertibility and vanishing of certain two-colored quantum binomial coefficients.
As a consequence, we prove that Abe's category of Soergel bimodules is equivalent to the diagrammatic Hecke category in complete generality.
\end{abstract}

\subjclass[2020]{Primary 20G42, 20F55; Secondary 11R18}

\maketitle

\section{Introduction} \label{intro}

Let $R$ be a commutative ring, and fix two scalars $[2]_s,[2]_t \in R$.
The \defnemph{two-colored Temperley--Lieb algebra} $2\TL_R(\prescript{}{s}{n}):=2\TL_R(\prescript{}{s}{n};[2]_s,[2]_t)$ is the $R$-algebra with generators 
$e_i$ for $1 \leq i \leq n-1$,
subject to the relations
\begin{align}
e_i^2& =-[2]_s & & \text{$i$ odd,} \label{eq:oddquadratic} \\
e_i^2& =-[2]_t & & \text{$i$ even,} \label{eq:evenquadratic} \\
e_i e_j & =e_j e_i & & \text{for $|i-j|>1$,} \\
e_i e_{i\pm 1} e_i & =e_i & & \label{eq:braidreln}
\end{align}
The algebra $2\TL_R(\prescript{}{t}{n})$ is defined identically, except that the parity conditions on the relations \eqref{eq:oddquadratic} and \eqref{eq:evenquadratic} are swapped. 
These algebras (introduced by Elias in \cite{elias-dihedralcathedral}) form a generalization of the ordinary Temperley--Lieb algebra, which occurs as a special case when $[2]_s=[2]_t$. 
By a standard argument there is an $R$-basis of $2\TL_R(\prescript{}{s}{n})$ consisting of monomials in the generators $e_i$.

We call a non-zero idempotent $\JW_R(\prescript{}{s}{n}) \in 2\TL_R(\prescript{}{s}{n})$ (and similarly for $\prescript{}{t}{n}$) a \defnemph{two-colored Jones--Wenzl projector} if $e_i\JW_R(\prescript{}{s}{n})=0$ for all $1 \leq i \leq n-1$ and the coefficient of $1$ in $\JW_R(\prescript{}{s}{n})$ is $1$. 
Such idempotents (if they exist) are unique. 


The behavior of $2\TL_R(\prescript{}{s}{n})$ is controlled by certain elements $[n]_s,[n]_t \in R$ for $n \in \ZZ$ called the \defnemph{two-colored quantum numbers}. 
These elements (defined in \eqref{eq:twocolqnum}) are bivariate polynomials in $[2]_s$ and $[2]_t$ which are analogous to ordinary quantum numbers.
For an integer $0 \leq k \leq n$ the \defnemph{two-colored quantum binomial coefficient}
\begin{equation*}
\qbinom{n}{k}_{s}=\frac{[n]_{s}!}{[k]_{s}![n-k]_{s}!}=\frac{[n]_{s}[n-1]_{s}\dotsm [n-k+1]_{s}}{[k]_{s}[k-1]_{s} \dotsm [1]_{s}}
\end{equation*}
can also be shown to be an element of $R$.
Our first main result is the two-colored analogue of the well-known existence theorem for ordinary Jones--Wenzl projectors.

\begin{thmletter} \label{existence}
The two-colored Jones--Wenzl projector $\JW_R(\prescript{}{s}{n})$ exists if and only if $\qbinom{n}{k}_{s}$ is invertible in $R$ for each integer $0 \leq k \leq n$.
\end{thmletter}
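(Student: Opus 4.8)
The plan is to reduce to a universal ground ring, show that the projector exists generically, and then pin down exactly which denominators are needed.

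Set $R_{0}=\ZZ[[2]_{s},[2]_{t}]$. Any commutative ring $R$ with a choice of $[2]_{s},[2]_{t}$ is canonically an $R_{0}$-algebra and $2\TL_{R}(\prescript{}{s}{n})=2\TL_{R_{0}}(\prescript{}{s}{n})\otimes_{R_{0}}R$; the conditions defining $\JW$ --- the linear equations $e_{i}\JW=0=\JW e_{i}$ for $1\le i\le n-1$, the normalization that the coefficient of $1$ equals $1$, and idempotency --- are polynomial in the coordinates of $\JW$ with respect to the monomial basis, hence are preserved under base change. Over $\Frac(R_{0})$ every two-colored quantum number $[m]_{s},[m]_{t}$ is a nonzero polynomial and therefore invertible, so the evident two-colored analogue of Wenzl's recursion yields a definite element $\JW^{\mathrm{gen}}:=\JW_{\Frac(R_{0})}(\prescript{}{s}{n})$ with coordinates $c_{D}\in\Frac(R_{0})$ indexed by the diagrams $D$.

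Everything then rests on the following denominator estimate. Write $\mathfrak{B}_{n}=\{\qbinom{n}{0}_{s},\dots,\qbinom{n}{n}_{s}\}$. First, every coordinate $c_{D}$ of $\JW^{\mathrm{gen}}$ lies in the localization $R_{0}[\mathfrak{B}_{n}^{-1}]$. Second, for each $0\le k\le n$ there is a basis diagram $D_{k}$ whose coordinate satisfies $\qbinom{n}{k}_{s}\,c_{D_{k}}=\pm 1$; indeed, after clearing denominators, $\qbinom{n}{k}_{s}\,c_{D_{k}}-(\pm 1)$ is an $R_{0}$-linear combination of the left-hand sides of the linear equations $e_{i}\JW=0$, $\JW e_{i}=0$, viewed as linear forms in the coordinates of $\JW$. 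Granting this, both directions of the theorem follow at once. If every $\qbinom{n}{k}_{s}$ is invertible in $R$, then $R$ is an $R_{0}[\mathfrak{B}_{n}^{-1}]$-algebra, so the image of $\JW^{\mathrm{gen}}$ in $2\TL_{R}(\prescript{}{s}{n})$ is an idempotent killed on both sides by every $e_{i}$ with coefficient $1$ on $1$, i.e.\ it is $\JW_{R}(\prescript{}{s}{n})$. Conversely, if $\JW_{R}(\prescript{}{s}{n})$ exists with coordinates $(c_{D}^{R})$, these satisfy the defining linear equations, so specializing the identity above gives $\qbinom{n}{k}_{s}\,c_{D_{k}}^{R}=\pm 1$ in $R$; thus every $\qbinom{n}{k}_{s}$ is invertible, with inverse $\pm c_{D_{k}}^{R}$.

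The main obstacle is proving the denominator estimate, i.e.\ showing that it is precisely the top binomials $\qbinom{n}{k}_{s}$, rather than the lower two-colored quantum numbers, that must be inverted. I would attempt this by induction on $n$: Wenzl's recursion writes $\JW^{\mathrm{gen}}$ in terms of $\JW_{\Frac(R_{0})}(\prescript{}{s}{n-1})$ and a scalar of the form $[n-1]_{\bullet}/[n]_{\bullet}$ (the colors depending on the parity of $n$), so the naive denominators involve all of $[1]_{s},\dots,[n]_{s}$ and their $t$-analogues, and one must show that the spurious factors telescope away and that the surviving denominators reorganize into $\mathfrak{B}_{n}$. This is delicate both because the two colors must be bookkept carefully throughout and because there is no downward induction on $n$ to lean on: already over $\ZZ$ with $[2]_{s}=[2]_{t}=0$ one computes $\JW_{\ZZ}(\prescript{}{s}{3})=1-e_{1}e_{2}-e_{2}e_{1}$, so $\JW(\prescript{}{s}{3})$ exists although $\JW(\prescript{}{s}{2})$ does not. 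An alternative would be to produce an explicit closed formula for $\JW^{\mathrm{gen}}$ --- a two-colored analogue of the Frenkel--Khovanov/Morrison expansion --- in which the coefficient of each diagram visibly has a single $\qbinom{n}{k}_{s}$ in its denominator; writing down and verifying such a formula is itself the heart of the matter.
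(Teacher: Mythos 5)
Your reduction to the universal ring $R_0=\ZZ[[2]_s,[2]_t]$, the decision to work with the generic projector $\JW^{\mathrm{gen}}$ over $\Frac(R_0)$, and your identification of the two halves of the denominator estimate --- that every coefficient lies in $R_0[\mathfrak{B}_n^{-1}]$, and that for each $k$ some diagram $D_k$ has coefficient exactly $\qbinom{n}{k}_s^{-1}$ --- are all present in the paper (the second half is its Lemma~\ref{qbinominvcoef}, with sign $+1$). The deduction of both directions of the theorem from that estimate is also in the right spirit, though the paper phrases it more cleanly via the annihilator $T_R$ and the observation that the coefficients of $g_n\JW_Q(\prescript{}{\sred}{n})$ generate the unit ideal of $A$, where $g_n=\prod_{1\le k\le n,\,k\nmid n+1}\Theta_{k,\sred}$ is the lcm of denominators.

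The gap is exactly the one you flag: you never prove the denominator estimate, and the routes you propose (direct induction via Wenzl's recursion, or a closed formula) are not what makes the argument go through. The paper instead \emph{leverages the known one-color existence theorem} (its Theorem~\ref{existence-onecol}, cited from Elias--Libedinsky). It introduces cyclotomic parts $\Theta_{k,\sred}$ of quantum numbers and the associated valuations $\nu_{k,\sred}$, proves principality results for ideals of quantum binomials (\cref{valbinom}, \cref{qbinomideal}, \cref{qbinominvideal}), and establishes the bound $\nu_k(\coeff D)\ge-1$ for $\JW_{\overline{Q}}(n)$ in the one-color case by combining the Wenzl recursion with the known existence theorem. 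It then transfers this to two colors using the isomorphism $\TL_{Q'}(n)\cong 2\TL_{Q'}(\prescript{}{\sred}{n})$ over $Q'=\Frac(A[x]/(x^2-x_sx_t))$, which shows that two-colored coefficients differ from one-colored ones only by monomial factors $x^ax_s^bx_t^c$, so all $\nu_{k,\sred}$ for $k>2$ agree with $\nu_k$, and the $k=2$ case is handled separately via a color-swap involution argument. Your observation that there is no downward induction (the example $\JW_{\ZZ}(\prescript{}{s}{3})$ existing when $\JW_{\ZZ}(\prescript{}{s}{2})$ does not) is correct and is a good indication of why a naive inductive argument fails; the paper's workaround is precisely this transfer from the one-color case where the result is already available. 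Finally, your proposed ``identity'' claim --- that $\qbinom{n}{k}_sc_{D_k}-(\pm1)$ is an $R_0$-linear combination of the linear forms of the defining equations --- is a nontrivial assertion that would need its own proof and is not how the paper argues; nothing like a certificate of that form appears.
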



The terminology for two-colored Temperley--Lieb algebras comes from their presentation as diagram algebras.
We associate the labels $s$ and $t$ with the colors red and blue, respectively, writing $\sred$ and $\tblu$ for emphasis.
A \defnemph{two-colored Temperley Lieb diagram} is a Temperley--Lieb diagram with the planar regions between strands colored with alternating colors. 
As a diagram algebra $2\TL_R(\prescript{}{\sred}{n})$ is spanned by two-colored Temperley--Lieb diagrams with $n$ boundary points on the top and bottom whose leftmost region is colored red.
A blue disk inside a red region evaluates to $-[2]_{\sred}$, while a red disk inside a blue region evaluates to $-[2]_{\tblu}$. 
Moreover we only consider two-colored Temperley--Lieb diagrams up to isotopy.
These diagrammatic relations directly correspond to \eqref{eq:oddquadratic}--\eqref{eq:braidreln}.
We draw the two-colored Jones--Wenzl projector as a rectangle labeled $\JW_R(\prescript{}{\sred}{n})$:
\begin{equation*}
\begin{gathered}
        \begin{tikzpicture}[xscale=-.3,yscale=0.2]
          \begin{scope}
            \clip (-3,3.5) rectangle (3,-3.5);
            \draw[fill=dgrmred] (0,4) rectangle (4,-4);
            \draw[fill=dgrmblu] (1,4) rectangle (2,-4);
         \draw[fill=dgrmblu] (-4,4) rectangle (-2,-4);
         \draw[fill=white] (-2.5,2) rectangle (2.5,-2);
         \node at (0,0) {$\JW_R(\prescript{}{\sred}{n})$};
         \node at (-1,2.7) {$\dots$};
         \node at (-1,-2.7) {$\dots$};
       \end{scope}
       \draw[dashed] (-3,3.5) to (3,3.5);
       \draw[dashed] (-3,-3.5) to (3,-3.5);
     \end{tikzpicture} \\
\text{$n$ odd}
\end{gathered} \qquad \qquad \qquad \qquad 
\begin{gathered}
        \begin{tikzpicture}[xscale=-.3,yscale=0.2]
          \begin{scope}
            \clip (-3,3.5) rectangle (3,-3.5);
            \draw[fill=dgrmred] (0,4) rectangle (4,-4);
            \draw[fill=dgrmblu] (1,4) rectangle (2,-4);
         \draw[fill=dgrmred] (-4,4) rectangle (-2,-4);
         \draw[fill=white] (-2.5,2) rectangle (2.5,-2);
         \node at (0,0) {$\JW_R(\prescript{}{\sred}{n})$};
         \node at (-1,2.7) {$\dots$};
         \node at (-1,-2.7) {$\dots$};
       \end{scope}
       \draw[dashed] (-3,3.5) to (3,3.5);
       \draw[dashed] (-3,-3.5) to (3,-3.5);
     \end{tikzpicture} \\
\text{$n$ even}
\end{gathered}
\end{equation*}

Suppose both $\JW_R(\prescript{}{\sred}{n})$ and $\JW_R(\prescript{}{\tblu}{n})$ exist.
We say that $\JW_R(\prescript{}{\sred}{n})$ is \defnemph{rotatable} if the (clockwise and counterclockwise) rotations of $\JW_R(\prescript{}{\sred}{n})$ by one strand are equal to some scalar multiple of $\JW_R(\prescript{}{\tblu}{n})$:
\begin{align*}
\begin{gathered}
  \begin{tikzpicture}[xscale=.45,yscale=.35]
    \begin{scope}
         \clip (-3,2) rectangle (3.5,-2);
    \draw[fill=dgrmblu] (-3.3,3) rectangle (1.4,-3);
    \draw[fill=dgrmred] (3.8,3) rectangle (1.4,-3);
             \draw[fill=dgrmred] (-.9,3) rectangle (0.5,-3);
             \draw[fill=white] (-.3,3) rectangle (0.8,-3);
             \draw[fill=dgrmblu] (2,3) to (2,-1)
         to[out=-90,in=180] (2.5,-1.5) to[out=0,in=-90] (3,-1) to
         (3,3) to (2,3);
         \draw[fill=dgrmred] (-1.5,-3) to (-1.5,1)
         to[out=90,in=0] (-2,1.5) to[out=180,in=90] (-2.5,1) to
         (-2.5,-3) to (-1.5,-3);
         \draw[fill=white] (-1.9,1) rectangle (2.4,-1);
         \node at (.25,0) {$\JW_R(\prescript{}{\sred}{n})$};
         \node at (.3,1.4) {$\dots$}; 
         \node at (.3,-1.4) {$\dots$};
         \end{scope}
         \draw[dashed] (-3,2) to (3.5,2);
         \draw[dashed] (-3,-2) to (3.5,-2);
       \end{tikzpicture}
\end{gathered}
    & =
     \lambda
\begin{gathered}
  \begin{tikzpicture}[xscale=.45,yscale=.35]
    \begin{scope}
         \clip (-2.3,2) rectangle (2.8,-2);
    \draw[fill=dgrmblu] (-3.3,3) rectangle (.5,-3);
             \draw[fill=dgrmred] (-1.5,3) rectangle (-.9,-3);
    \draw[fill=dgrmred] (3.8,3) rectangle (.5,-3);
    \draw[fill=dgrmblu] (2,3) rectangle (1.4,-3);
             \draw[fill=white] (-.3,3) rectangle (.8,-3);
         \draw[fill=white] (-1.9,1) rectangle (2.4,-1);
         \node at (.25,0) {$\JW_R(\prescript{}{\tblu}{n})$};
         \node at (.3,1.4) {$\dots$}; 
         \node at (.3,-1.4) {$\dots$};
         \end{scope}
         \draw[dashed] (-2.3,2) to (2.8,2);
         \draw[dashed] (-2.3,-2) to (2.8,-2);
       \end{tikzpicture}
\end{gathered}
=
\begin{gathered}
  \begin{tikzpicture}[xscale=.45,yscale=-.35]
    \begin{scope}
         \clip (-3,2) rectangle (3.5,-2);
    \draw[fill=dgrmblu] (-3.3,3) rectangle (1.4,-3);
    \draw[fill=dgrmred] (3.8,3) rectangle (1.4,-3);
             \draw[fill=dgrmred] (-.9,3) rectangle (0.5,-3);
             \draw[fill=white] (-.3,3) rectangle (0.8,-3);
             \draw[fill=dgrmblu] (2,3) to (2,-1)
         to[out=-90,in=180] (2.5,-1.5) to[out=0,in=-90] (3,-1) to
         (3,3) to (2,3);
         \draw[fill=dgrmred] (-1.5,-3) to (-1.5,1)
         to[out=90,in=0] (-2,1.5) to[out=180,in=90] (-2.5,1) to
         (-2.5,-3) to (-1.5,-3);
         \draw[fill=white] (-1.9,1) rectangle (2.4,-1);
         \node at (.25,0) {$\JW_R(\prescript{}{\sred}{n})$};
         \node at (.3,1.4) {$\dots$}; 
         \node at (.3,-1.4) {$\dots$};
         \end{scope}
         \draw[dashed] (-3,2) to (3.5,2);
         \draw[dashed] (-3,-2) to (3.5,-2);
       \end{tikzpicture}
\end{gathered} & \text{($n$ odd)} \\
\begin{gathered}
  \begin{tikzpicture}[xscale=.45,yscale=.35]
    \begin{scope}
         \clip (-3,2) rectangle (3.5,-2);
    \draw[fill=dgrmblu] (-3.3,3) rectangle (3.8,-3);
             \draw[fill=dgrmred] (-.9,3) rectangle (1.4,-3);
             \draw[fill=white] (-.3,3) rectangle (.8,-3);
             \draw[fill=dgrmred] (2,3) to (2,-1)
         to[out=-90,in=180] (2.5,-1.5) to[out=0,in=-90] (3,-1) to
         (3,3) to (2,3);
         \draw[fill=dgrmred] (-1.5,-3) to (-1.5,1)
         to[out=90,in=0] (-2,1.5) to[out=180,in=90] (-2.5,1) to
         (-2.5,-3) to (-1.5,-3);
         \draw[fill=white] (-1.9,1) rectangle (2.4,-1);
         \node at (.25,0) {$\JW_R(\prescript{}{\sred}{n})$};
         \node at (.3,1.4) {$\dots$}; 
         \node at (.3,-1.4) {$\dots$};
         \end{scope}
         \draw[dashed] (-3,2) to (3.5,2);
         \draw[dashed] (-3,-2) to (3.5,-2);
       \end{tikzpicture}
\end{gathered}
    & =
     \lambda
\begin{gathered}
  \begin{tikzpicture}[xscale=.45,yscale=.35]
    \begin{scope}
         \clip (-2.3,2) rectangle (2.8,-2);
    \draw[fill=dgrmblu] (-3.3,3) rectangle (3.8,-3);
             \draw[fill=dgrmred] (-1.5,3) rectangle (2,-3);
             \draw[fill=dgrmblu] (-.9,3) rectangle (1.4,-3);
             \draw[fill=white] (-.3,3) rectangle (.8,-3);
         \draw[fill=white] (-1.9,1) rectangle (2.4,-1);
         \node at (.25,0) {$\JW_R(\prescript{}{\tblu}{n})$};
         \node at (.3,1.4) {$\dots$}; 
         \node at (.3,-1.4) {$\dots$};
         \end{scope}
         \draw[dashed] (-2.3,2) to (2.8,2);
         \draw[dashed] (-2.3,-2) to (2.8,-2);
       \end{tikzpicture}
\end{gathered}
=
\begin{gathered}
  \begin{tikzpicture}[xscale=.45,yscale=-0.35]
    \begin{scope}
         \clip (-3,2) rectangle (3.5,-2);
    \draw[fill=dgrmblu] (-3.3,3) rectangle (3.8,-3);
             \draw[fill=dgrmred] (-.9,3) rectangle (1.4,-3);
             \draw[fill=white] (-.3,3) rectangle (.8,-3);
             \draw[fill=dgrmred] (2,3) to (2,-1)
         to[out=-90,in=180] (2.5,-1.5) to[out=0,in=-90] (3,-1) to
         (3,3) to (2,3);
         \draw[fill=dgrmred] (-1.5,-3) to (-1.5,1)
         to[out=90,in=0] (-2,1.5) to[out=180,in=90] (-2.5,1) to
         (-2.5,-3) to (-1.5,-3);
         \draw[fill=white] (-1.9,1) rectangle (2.4,-1);
         \node at (.25,0) {$\JW_R(\prescript{}{\sred}{n})$};
         \node at (.3,1.4) {$\dots$}; 
         \node at (.3,-1.4) {$\dots$};
         \end{scope}
         \draw[dashed] (-3,2) to (3.5,2);
         \draw[dashed] (-3,-2) to (3.5,-2);
       \end{tikzpicture}
\end{gathered} & \text{($n$ even)}
\end{align*}
Our second main result gives a combined condition for the existence and rotatability of two-colored Jones--Wenzl projectors.

\begin{thmletter} \label{existsrotates}
The two-colored Jones--Wenzl projectors $\JW_R(\prescript{}{\sred}{n})$ and $\JW_R(\prescript{}{\tblu}{n})$ exist and are rotatable if and only if $\qbinom{n+1}{k}_{\sred}=\qbinom{n+1}{k}_{\tblu}=0$ in $R$ for all integers $1 \leq k \leq n$.
\end{thmletter}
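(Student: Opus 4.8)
The plan is to take the existence half from Theorem~\ref{existence} and to reduce rotatability, for each colour, to a single additional Temperley--Lieb relation applied to the projector, which is then evaluated diagrammatically. By Theorem~\ref{existence}, $\JW_R(\prescript{}{\sred}{n})$ and $\JW_R(\prescript{}{\tblu}{n})$ exist if and only if every $\qbinom nk_{\sred}$ and $\qbinom nk_{\tblu}$, $0\le k\le n$, is invertible. So the theorem reduces to two assertions: (a) assuming both projectors exist, they are both rotatable if and only if $\qbinom{n+1}{k}_{\sred}=\qbinom{n+1}{k}_{\tblu}=0$ for $1\le k\le n$; and (b) these vanishing conditions already force every $\qbinom nk_{\sred},\qbinom nk_{\tblu}$ to be invertible, so that ``exists and rotatable'' coincides with the stated condition. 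Assertion (b) is a finite computation with two-colored quantum binomials: from $[k]_{\sred}\qbinom{n+1}{k}_{\sred}=[n+1]_{\sred}\qbinom n{k-1}_{\sred}$, $[n+1-k]_{\sred}\qbinom{n+1}{k}_{\sred}=[n+1]_{\sred}\qbinom nk_{\sred}$, the $\tblu$-analogues, and the two-colored Pascal recursion, one checks that each $\qbinom nk_{\sred}$ (resp.\ $\qbinom nk_{\tblu}$) generates the unit ideal of $\ZZ[[2]_{\sred},[2]_{\tblu}]$ together with the elements $\qbinom{n+1}{j}_{\sred},\qbinom{n+1}{j}_{\tblu}$ ($1\le j\le n$), and therefore becomes invertible in $R$; some care is needed because $R$ may contain zero divisors.

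For assertion (a), assume both projectors exist and let $\rho$ be the clockwise one-strand rotation, so $\rho(\JW_R(\prescript{}{\sred}{n}))\in 2\TL_R(\prescript{}{\tblu}{n})$. In the diagram for $\rho(\JW_R(\prescript{}{\sred}{n}))$, all the caps of $2\TL_R(\prescript{}{\tblu}{n})$ except the one meeting the rotated strand (say $e_1$, up to relabelling) touch only the unrotated copy of $\JW_R(\prescript{}{\sred}{n})$ inside it, where they witness the relations $e_1\JW_R(\prescript{}{\sred}{n})=\dots=e_{n-2}\JW_R(\prescript{}{\sred}{n})=0$; so $e_i\rho(\JW_R(\prescript{}{\sred}{n}))=0$ for $2\le i\le n-1$ automatically, and the sole obstruction to $\rho(\JW_R(\prescript{}{\sred}{n}))$ being a scalar multiple of $\JW_R(\prescript{}{\tblu}{n})$ is $e_1\rho(\JW_R(\prescript{}{\sred}{n}))=0$. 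Undoing one click of rotation, this becomes $e_0\JW_R(\prescript{}{\sred}{n})=0$, where $e_0$ is the diagram obtained from $e_1$ by one click of rotation (a cup/cap wrapping round the outer region). Conversely, if $e_0\JW_R(\prescript{}{\sred}{n})=0$ then $\rho(\JW_R(\prescript{}{\sred}{n}))$ lies in the two-sided annihilator $\{x:e_ix=xe_i=0\text{ for all }i\}$ of $2\TL_R(\prescript{}{\tblu}{n})$ (the two-sidedness using the top--bottom and left--right reflection symmetries of $\JW$), which equals $R\cdot\JW_R(\prescript{}{\tblu}{n})$ since $\JW_R(\prescript{}{\tblu}{n})$ exists; hence $\rho(\JW_R(\prescript{}{\sred}{n}))=\lambda\JW_R(\prescript{}{\tblu}{n})$ with $\lambda\ne0$ (as $\rho$ is invertible) and with the same $\lambda$ for the counterclockwise rotation (as the left--right reflection interchanges the two rotations). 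Thus $\JW_R(\prescript{}{\sred}{n})$ is rotatable if and only if $e_0\JW_R(\prescript{}{\sred}{n})=0$, and symmetrically $\JW_R(\prescript{}{\tblu}{n})$ is rotatable if and only if $e_0\JW_R(\prescript{}{\tblu}{n})=0$.

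It remains to identify this obstruction. Since $e_0$ is a cup composed after a cap and that cup is split injective (the wrap-around circle evaluates to an invertible scalar), $e_0\JW_R(\prescript{}{\sred}{n})=0$ is equivalent to the vanishing of the morphism obtained from $\JW_R(\prescript{}{\sred}{n})$ by capping its two outermost strands together around the outer region. I would compute this by induction on $n$ using the two-colored analogue of Wenzl's recursion, which expresses $\JW_R(\prescript{}{\sred}{n})$ through $\JW_R(\prescript{}{\sred}{n-1})\otimes\ident$ with a coefficient that is a ratio of two-colored quantum numbers (available because those quantum numbers are invertible) --- equivalently, using the explicit expansion of $\JW_R(\prescript{}{\sred}{n})$ in the diagram basis. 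Pushing the outer cap through the recursion produces the outer partial trace of $\JW_R(\prescript{}{\sred}{n-1})$, controlled by quantum numbers via the familiar ``capping off a strand'' identity, together with the inductively known $e_0\JW_R(\prescript{}{\sred}{n-1})$; the outcome is that $e_0\JW_R(\prescript{}{\sred}{n})=\sum_{k=1}^{n}\pm u_k\,\qbinom{n+1}{k}_{\sred}\,D_k$ with $u_k\in R^{\times}$ and the $D_k$ linearly independent two-colored diagrams (distinguished by where their cup/cap returns occur), so $e_0\JW_R(\prescript{}{\sred}{n})=0$ if and only if every $\qbinom{n+1}{k}_{\sred}$ vanishes, and likewise for $\tblu$. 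Combining this with assertion (b) proves the theorem in both directions.

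The hard part will be this induction: tracking the colours, signs, and iterated partial traces while pushing the outer cap through Wenzl's recursion, and above all verifying that the coefficients which survive are exactly the quantum binomials $\qbinom{n+1}{k}_{\sred}$ rather than just $[n+1]_{\sred}=\qbinom{n+1}{1}_{\sred}$ (which governs only $D_1$). That the stronger condition is genuinely necessary is already visible for $n\ge3$: one can have $[n+1]_{\sred}=0$ in $R$ while some $\qbinom{n+1}{k}_{\sred}\ne0$, and then $\JW_R(\prescript{}{\sred}{n})$ is not rotatable. By comparison, the quantum-binomial bookkeeping in assertion (b) is routine.
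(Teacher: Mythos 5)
Your proposal follows essentially the same route as the paper, but two steps need attention.

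For the existence half, your ``assertion~(b)'' (vanishing of the $\qbinom{n+1}{k}$'s forces invertibility of the $\qbinom{n}{k}$'s) is indeed necessary. The paper gets it by citing Abe's Proposition~3.4, and it also drops out of the paper's own ideal-theoretic lemmas: by \cref{qbinomideal} the ideal of the $\qbinom{n+1}{k}_{\sred}$ is $(\Theta_{n+1,\sred})$, by \cref{thetaprincipal} $\Theta_{n+1,\sred}$ is comaximal in $A$ with $g_n=\prod_{k\nmid n+1}\Theta_{k,\sred}$, and by \cref{qbinominvideal} each $\qbinom{n}{k}_{\sred}$ divides $g_n$. Your plan via the Pascal recursion and the two divisibility identities is plausible, but it is genuinely more laborious than either of these, and the claim that it is ``routine'' should be backed up; this is not a finite check you can wave away.

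For the rotatability half, your reduction to $e_0\JW_R(\prescript{}{\sred}{n})=0$ (via the annihilator argument) is sound and is exactly the paper's observation that rotatability is equivalent to $\pTr(\JW_R(\prescript{}{\sred}{n}))=0$. The weak point is the claimed identity
\[
e_0\JW_R(\prescript{}{\sred}{n})=\sum_{k=1}^{n}\pm u_k\,\qbinom{n+1}{k}_{\sred}\,D_k ,\qquad u_k\in R^{\times},
\]
with the $D_k$ linearly independent diagrams. As written this is too strong: $\pTr(\JW_Q(\prescript{}{\sred}{n}))=-\tfrac{[n+1]_{\sred}}{[n]_{\sred}}\JW_Q(\prescript{}{\sred}{(n-1)})$ is supported on the full Catalan number of diagrams in $2\TL(\prescript{}{\sred}{(n-1)})$, not just $n$ of them, and a typical coefficient is a \emph{sum} of expressions $-a\cdot\qbinom{n+1}{k+1}_{\sred}\big/\bigl(\qbinom{n}{k}_{\sred}\qbinom{n}{k+1}_{\sred}\bigr)$ with $a\in A$ arbitrary, not a single unit multiple of a binomial. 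What is true, and what suffices, is (i)~every such coefficient is killed once all $\qbinom{n+1}{j}_{\sred}$ vanish, and (ii)~for the $n$ nested cup/cap diagrams of \cref{qbinominvcoef} the coefficient is exactly $-\qbinom{n+1}{k+1}_{\sred}\big/\bigl(\qbinom{n}{k}_{\sred}\qbinom{n}{k+1}_{\sred}\bigr)$, a unit times $\qbinom{n+1}{k+1}_{\sred}$, which supplies the converse direction. This is precisely \cref{simple-rotatability}; the paper derives it from the generic trace formula \eqref{eq:genericpTr} together with \cref{genericcoefs} and \cref{qbinominvcoef}, which avoids the delicate ``push the cap through Wenzl's recursion'' induction you flag as the hard part. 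I would recommend replacing your displayed identity with statements~(i) and~(ii) and citing those lemmas rather than attempting the direct induction.
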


The same algebraic condition was first introduced by Abe in the context of the Hecke category \cite[Assumption~1.1]{abe-homBS}, which we discuss below.


\subsection*{The Hecke category}

The two-colored Temperley--Lieb algebra lies at the heart of the Elias--Williamson diagrammatic Hecke category \cite{ew-soergelcalc}. 
In more detail, the diagrammatic Hecke category is only well defined when certain two-colored Jones--Wenzl projectors exist and are rotatable. 
Elias--Williamson initially gave an incorrect algebraic condition for rotatability in \cite[{(3.3)}]{ew-soergelcalc}; they later identified and partially corrected this error in \cite[\S 5]{ew-localizedcalc}. 
Our rotatability condition in \cref{existsrotates} is enough to completely correct this error. 

\begin{corletter} \label{Hwelldefined}
In the absence of parabolic type $H_3$ subgroups (see \cref{H3defined}), the diagrammatic Hecke category is well defined if and only if the underlying realization is an Abe realization (see \cite[Assumption~1.1]{abe-homBS} or \cref{realizcorrected}). 
\end{corletter}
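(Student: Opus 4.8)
The plan is to combine \cref{existsrotates} with the classification of finite Coxeter groups of rank~$3$. First I would make precise what it means for the diagrammatic Hecke category $\mathcal{D}$ to be well defined. Following Elias--Williamson \cite{ew-soergelcalc}, with the corrections of \cite{ew-localizedcalc}, the category $\mathcal{D}$ associated with a Coxeter system $(W,S)$ and a realization is given by generators and relations, and is \emph{well defined} exactly when: (i) for every pair $s\neq t$ in $S$ with $m_{st}<\infty$, the two-colored Jones--Wenzl projectors $\JW_R(\prescript{}{\sred}{m_{st}-1})$ and $\JW_R(\prescript{}{\tblu}{m_{st}-1})$ needed to build the $2m_{st}$-valent vertex and to make the one- and two-color relations consistent exist and are rotatable in the two-colored Temperley--Lieb algebras $2\TL_R(\prescript{}{\sred}{m_{st}-1})$, $2\TL_R(\prescript{}{\tblu}{m_{st}-1})$ whose parameters are the two-colored quantum numbers attached to $\langle s,t\rangle$ by the realization; and (ii) the Zamolodchikov relation holds for every finite standard parabolic subgroup of rank~$3$. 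Here (i) depends only on the rank-$2$ parabolics, while (ii) is the only genuinely higher-rank input.

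I would then dispatch the forward implication, which does not use the hypothesis on $H_3$. If $\mathcal{D}$ is well defined, then in particular (i) holds for every finite dihedral parabolic $\langle s,t\rangle$. Applying \cref{existsrotates} with $n=m_{st}-1$, condition (i) for the pair $(s,t)$ is equivalent to $\qbinom{m_{st}}{k}_{\sred}=\qbinom{m_{st}}{k}_{\tblu}=0$ in $R$ for all $1\le k\le m_{st}-1$. As this must hold simultaneously for every such pair, the realization satisfies Abe's Assumption~1.1 (\cite[Assumption~1.1]{abe-homBS}, \cref{realizcorrected}), i.e.\ it is an Abe realization.

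For the converse I would assume the realization is an Abe realization and run the same argument in reverse: \cref{existsrotates}, applied pair-by-pair, gives (i). It remains to establish (ii). The finite standard parabolic subgroups of rank~$3$ are, up to isomorphism, $A_1^{3}$, $A_1\times I_2(m)$ for $3\le m<\infty$, $A_3$, $B_3$ and $H_3$. For the reducible types the Zamolodchikov relation follows formally from relations of rank $\le 2$, hence from (i); for $A_3$ and $B_3$ it was shown by Elias--Williamson \cite{ew-soergelcalc} to be a consequence of the rank-$\le 2$ conditions. Since by hypothesis $(W,S)$ admits no parabolic subgroup of type $H_3$ (\cref{H3defined}), condition (ii) holds, and hence $\mathcal{D}$ is well defined. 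As corroboration one could also invoke Abe's category of Soergel bimodules, which is available precisely under an Abe realization, together with the functor into it constructed earlier in the paper; but that route cannot by itself cover type $H_3$, since even defining the functor presupposes the $H_3$ Zamolodchikov relation in $\mathcal{D}$.

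The main obstacle, and the source of the caveat in the statement, is exactly condition (ii) in type $H_3$: whereas the Zamolodchikov relations for the reducible rank-$3$ types and for $A_3$ and $B_3$ are known to follow automatically once the rank-$\le 2$ Jones--Wenzl projectors behave correctly, no such implication is presently available for $H_3$, so in the presence of an $H_3$ parabolic one cannot deduce well-definedness from Abe's condition alone. Everything else reduces to translating \cref{existsrotates} into the language of \cite[Assumption~1.1]{abe-homBS} via the pair-by-pair reduction, together with the classical list of finite rank-$3$ Coxeter groups.
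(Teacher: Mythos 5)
Your argument is essentially the paper's: the heart of the matter is that \cref{existsrotates}, applied to each finite dihedral parabolic $\langle s,t\rangle$ with $n=m_{st}-1$, translates the existence and rotatability of the relevant two-colored Jones--Wenzl projectors into Abe's vanishing condition on quantum binomial coefficients, and the $H_3$ exclusion handles the remaining rank-$3$ issue. The paper's own proof is considerably terser: after establishing this equivalence it simply defers to the discussion in \cite[\S 5.1]{ew-localizedcalc} for the meaning of ``well defined'' and the remaining bookkeeping, rather than spelling out the rank-$3$ classification as you do.

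One imprecision worth flagging: you assert that the Zamolodchikov relations for $A_3$ and $B_3$ ``were shown by Elias--Williamson to be a consequence of the rank-$\le 2$ conditions.'' They are not consequences of the rank-$\le 2$ relations; they are independent relations imposed as part of the definition of the diagrammatic Hecke category. The reason $H_3$ must be excluded (see \cref{H3defined}) is not that a Zamolodchikov condition might \emph{fail} to hold there, but that the $H_3$ Zamolodchikov relation is not explicitly known, so the presentation of the category cannot even be written down in that case. Correspondingly, ``well defined'' in \cref{Hwelldefined} refers to the consistency of the rank-$\le 2$ data (existence and rotatability of the Jones--Wenzl projectors needed to build the $2m_{st}$-valent vertices and the two-color relations), not to verifying rank-$3$ relations, which are simply imposed wherever they are available. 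This reframing does not change the validity of your argument, only the way it should be presented: condition (ii) is not something one checks but something one writes down, and $H_3$ is excluded because one cannot.
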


Recently Abe has shown that there is a ``bimodule-theoretic'' category (a modification of the category of classical Soergel bimodules) which under mild conditions is equivalent to the diagrammatic Hecke category when the underlying realization is an Abe realization \cite{abe-bimodhecke,abe-homBS}.
An important consequence of \cref{Hwelldefined} is that this equivalence essentially always holds. 

\begin{corletter} \label{abeequalsew}
Assume Demazure surjectivity holds (see \cref{heckecategorification}), and that the base ring is a Noetherian domain. 
If the diagrammatic Hecke category is well defined, it is equivalent to Abe's category of Soergel bimodules.
\end{corletter}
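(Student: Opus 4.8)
The plan is to derive this as a formal consequence of \cref{existsrotates} together with Abe's comparison theorem between his bimodule-theoretic category and the diagrammatic Hecke category. Abe has shown in \cite{abe-bimodhecke,abe-homBS} that, for any realization which is an Abe realization (i.e.\ satisfies \cite[Assumption~1.1]{abe-homBS}, see also \cref{realizcorrected}), over a Noetherian domain for which Demazure surjectivity holds, his category of Soergel bimodules is equivalent to the diagrammatic Hecke category. So, granting those two inputs, it suffices to prove the single implication: \emph{if the diagrammatic Hecke category is well defined, then the underlying realization is an Abe realization}. The desired equivalence then follows by combining this with Abe's theorem under the hypotheses in the statement.

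To establish that implication, the first step is to unwind what well-definedness of the diagrammatic Hecke category entails. As recalled in \cref{intro}, for every pair $s \neq t$ of simple reflections with $m := m_{st} < \infty$ the category is only well defined once the two-colored Jones--Wenzl projectors $\JW_R(\prescript{}{\sred}{m})$ and $\JW_R(\prescript{}{\tblu}{m})$ exist and are rotatable: here the two scalars $[2]_{\sred}, [2]_{\tblu}$ are the ones determined by the realization on the rank-two parabolic generated by $s$ and $t$, and these projectors are exactly what is used to build the $2m$-valent generating morphisms and to impose their rotational (cyclicity) relations. Applying \cref{existsrotates} with $n = m$ to each such pair yields $\qbinom{m+1}{k}_{\sred} = \qbinom{m+1}{k}_{\tblu} = 0$ in $R$ for all $1 \leq k \leq m$. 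Since nothing is imposed when $m_{st} = \infty$, ranging over all pairs $s \neq t$ shows the realization satisfies \cite[Assumption~1.1]{abe-homBS} verbatim, i.e.\ it is an Abe realization. Note that only this ``easy'' direction of \cref{Hwelldefined} is needed, so the parabolic type $H_3$ phenomenon plays no role here.

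The main obstacle is the bookkeeping in the first step: one must carefully match the notion of ``well defined'' from \cite{ew-soergelcalc,ew-localizedcalc} (and \cref{Hwelldefined}) with the precise existence-and-rotatability statement of \cref{existsrotates}, in particular confirming that the two-colored quantum binomials appearing in \cref{existsrotates} are computed from the very scalars $[2]_{\sred}, [2]_{\tblu}$ that the realization attaches to the pair $(s,t)$, and that Abe's \cite[Assumption~1.1]{abe-homBS} is stated in exactly this form. Once the realization is recognized as an Abe realization, the remaining step is a citation: Abe's equivalence \cite{abe-bimodhecke,abe-homBS} applies under Demazure surjectivity (see \cref{heckecategorification}) and the Noetherian-domain hypothesis, which are precisely the standing assumptions of the corollary, so no further difficulty arises there.
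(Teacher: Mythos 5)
Your overall strategy matches the paper's own argument: use \cref{existsrotates} to convert well-definedness of $\mathcal{H}$ into Abe's algebraic condition, then invoke Abe's equivalence theorems \cite[Theorem~3.9]{abe-homBS} and \cite[Theorem~5.9]{abe-bimodhecke}. You also correctly observe that only the implication ``well defined $\Rightarrow$ Abe realization'' is needed, so the $H_3$ caveat in \cref{Hwelldefined} is irrelevant.

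However, there is an off-by-one error that makes your derived condition wrong. Having set $m := m_{st}$, you assert that well-definedness requires existence and rotatability of $\JW_R(\prescript{}{\sred}{m})$ and $\JW_R(\prescript{}{\tblu}{m})$, and then apply \cref{existsrotates} with $n = m$ to get $\qbinom{m+1}{k}_{\sred} = \qbinom{m+1}{k}_{\tblu} = 0$ for $1 \leq k \leq m$. But the projectors needed for the $2m_{st}$-valent generator are $\JW_R(\prescript{}{\sred}{(m_{st}-1)})$ and $\JW_R(\prescript{}{\tblu}{(m_{st}-1)})$, as made explicit in the discussion following \cref{realizcorrected}. Applying \cref{existsrotates} with $n = m_{st}-1$ gives $\qbinom{m_{st}}{k}_{\sred} = \qbinom{m_{st}}{k}_{\tblu} = 0$ for $1 \leq k \leq m_{st}-1$, which is precisely condition \ref{item:abecond} of an Abe realization and Abe's \cite[Assumption~1.1]{abe-homBS}. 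Your version (with $m_{st}+1$ in the top of the binomial) is a genuinely different, stronger condition --- in particular it forces $[m_{st}+1]_{\sred}=0$ rather than $[m_{st}]_{\sred}=0$ --- so the claim that it recovers Abe's assumption ``verbatim'' does not hold as written. Once the index is corrected to $n = m_{st}-1$, the rest of the argument goes through and coincides with the paper's proof.
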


We find it noteworthy that our result gives the best possible equivalence result for two seemingly distinct categorifications of the Hecke algebra.

\subsection*{Acknowledgments}

We thank the anonymous referee for their helpful comments and suggestions.
We are also grateful for financial support from the Royal Commission for the Exhibition of 1851 and EPSRC (EP/V00090X/1).

\section{Preliminaries}
\label{prelim}

Let $A=\ZZ[x_{\sred},x_{\tblu}]$ be the integral polynomial ring in two variables. 
The \defnemph{two-colored quantum numbers} are defined as follows. 
First set $[1]_{\sred}=[1]_{\tblu}=1$, $[2]_{\sred}=x_{\sred}$, and $[2]_{\tblu}=x_{\tblu}$ in $A$. 
For $n>1$ we inductively define
\begin{align}
[n+1]_{\sred}& =[2]_{\sred} [n]_{\tblu} - [n-1]_{\sred} \text{,} & [n+1]_{\tblu} & =[2]_{\tblu} [n]_{\sred} - [n-1]_{\tblu} \text{.} \label{eq:twocolqnum}
\end{align}
These formulas can be rearranged to inductively define $[n]_{\sred}$ and $[n]_{\tblu}$ for $n \leq 0$. 
For a commutative $A$-algebra $R$, we also define two-colored quantum numbers in $R$ to be the specializations of two-colored quantum numbers in $A$, which we will write in the same way.

These polynomials are bivariate extensions of the usual (one-colored) quantum numbers, which can be recovered as follows.
Let $\overline{A}=A/(x_{\sred}-x_{\tblu}) \iso \ZZ[x]$, where $x$ is the image of $x_{\sred}$ or $x_{\tblu}$.
Then the one-colored quantum number $[n]$ is the image of $[n]_{\sred}$ or $[n]_{\tblu}$ in $\overline{A}$.
When $n$ is odd, $[n]$ is an even polynomial, so we can formally evaluate $[n]$ at $x=\sqrt{x_{\sred} x_{\tblu}}$ to obtain an element of $A$.
When $n$ is even, $[n]/[2]$ is an even polynomial, which we can similarly formally evaluate at $x=\sqrt{x_{\sred} x_{\tblu}}$.
In both cases, it is easy to show by induction that
\begin{equation} \label{eq:colordependence}
\begin{aligned}
[n]_{\sred}& =[n](\sqrt{x_{\sred} x_{\tblu}})=[n]_{\tblu} & & \text{if $n$ is odd,} \\
\frac{[n]_{\sred}}{[2]_{\sred}}& =\left(\frac{[n]}{[2]}\right)(\sqrt{x_{\sred} x_{\tblu}})=\frac{[n]_{\tblu}}{[2]_{\tblu}} & & \text{if $n$ is even}
\end{aligned}
\end{equation}
in $A$.
In other words, two-colored quantum numbers are essentially the same as ordinary quantum numbers up to a factor of $[2]_{\sred}$ and $[2]_{\tblu}$ depending on color.

It is self-evident that the automorphism of $A$ which exchanges $x_{\sred}$ and $x_{\tblu}$ (``color swap'') also exchanges $[n]_{\sred}$ and $[n]_{\tblu}$ for all $n$.
For this reason, we will generally write statements only for $[n]_{\sred}$ and leave it to the reader to formulate color-swapped analogues.
Similarly we have $2\TL(\prescript{}{\sred}{n}; [2]_{\sred},[2]_{\tblu}) \iso 2\TL(\prescript{}{\tblu}{n}; [2]_{\tblu},[2]_{\sred})$, and this isomorphism maps $\JW_R(\prescript{}{\sred}{n})$ to $\JW_R(\prescript{}{\tblu}{n})$ when they exist, so we will only state our results for $2\TL_R(\prescript{}{\sred}{n})$ and $\JW_R(\prescript{}{\sred}{n})$.

Let $D=e_{i_1} e_{i_2} \dotsb e_{i_r}$ be a monomial of length $r$ in the generators of $2\TL(\prescript{}{\sred}{n})$.
We say that $D$ is \defnemph{reduced} if it cannot be rewritten as a monomial $e_{j_1} e_{j_2} \dotsm e_{j_s}$ in the generators using \eqref{eq:oddquadratic}--\eqref{eq:braidreln} for some $s<r$.
As mentioned in \cref{intro}, the two-colored Temperley--Lieb algebra $2\TL(\prescript{}{\sred}{n})$ has a basis consisting of these reduced monomials. 
As in the one-colored case, there is a bijection between this basis in $2\TL(\prescript{}{\sred}{n})$ and (isotopy classes of) two-colored Temperley--Lieb diagrams whose leftmost region is colored red which induces an isomorphism between the algebraic and diagrammatic versions of $2\TL(\prescript{}{\sred}{n})$. 
(For a careful proof of this fact in the one-colored case see e.g.~\cite[Theorem~2.4]{ridout-st-aubin}.)
Under this isomorphism we have
\begingroup
\allowdisplaybreaks
\begin{align*}
e_i & \longmapsto
  \begin{tikzpicture}[xscale=.75,yscale=.25,baseline=(base)]
    \path (0,-6pt) coordinate (base);
    \begin{scope}
        \clip (-3.5,2) rectangle (2.7,-2);
        \draw[fill=dgrmred] (-3.8-0.25,3) rectangle (2.2+0.25,-3);
        \draw[fill=dgrmblu] (2.7+0.25,3) rectangle (2.2+0.25,-3);
        \draw[fill=dgrmblu] (-3-0.25,3) rectangle (-2.5-0.25,-3);
        \draw[fill=dgrmblu] (-1.4,3) rectangle (-0.9,-3);
        \draw[fill=dgrmblu] (0.6,3) rectangle (1.1,-3);
        \draw[fill=white] (-2-0.25,3) rectangle (-1.4,-3);
        \draw[fill=white] (1.1,3) rectangle (1.7+0.25,-3);
        \draw[fill=dgrmblu] (-0.4,3) to (-0.4,2) arc[x radius=0.25, y radius=0.75, start angle=180, end angle=360]
        to (0.1,3);
        \draw[fill=dgrmblu] (-0.4,-3) to (-0.4,-2) arc[x radius=0.25, y radius=0.75, start angle=180, end angle=0]
        to (0.1,-3);
        \node at (-2.25*0.5-1.4*0.5,0) {$\dots$}; 
        \node at (1.1*0.5+1.95*0.5,0) {$\dots$};
     \end{scope}
         \draw[dashed] (-3.5,2) to (2.7,2);
         \draw[dashed] (-3.5,-2) to (2.7,-2);
         \path (-3-0.25,-2) node[below, font=\scriptsize] {$1$};
         \path (-2.5-0.25,-2) node[below, font=\scriptsize] {$2$};
         \path (2.2+0.25,-2) node[below, font=\scriptsize] {$n$};
         \path (-0.4,-2) node[below, font=\scriptsize] {$i$};
  \end{tikzpicture}
   \qquad \text{($i,n$ odd)} \\
e_i & \longmapsto
  \begin{tikzpicture}[xscale=.75,yscale=.25,baseline=(base)]
    \path (0,-6pt) coordinate (base);
    \begin{scope}
        \clip (-3.5,2) rectangle (2.7,-2);
        \draw[fill=dgrmred] (-3.8-0.25,3) rectangle (2.2+0.25,-3);
        \draw[fill=dgrmblu] (2.7+0.25,3) rectangle (2.2+0.25,-3);
        \draw[fill=dgrmblu] (-3-0.25,3) rectangle (-2.5-0.25,-3);
        \draw[fill=dgrmred] (-1.4,3) rectangle (-0.9,-3);
        \draw[fill=dgrmred] (0.6,3) rectangle (1.1,-3);
        \draw[fill=dgrmblu] (-0.9,-3) rectangle (0.6,3);
        \draw[fill=white] (-2-0.25,3) rectangle (-1.4,-3);
        \draw[fill=white] (1.1,3) rectangle (1.7+0.25,-3);
        \draw[fill=dgrmred] (-0.4,3) to (-0.4,2) arc[x radius=0.25, y radius=0.75, start angle=180, end angle=360]
        to (0.1,3);
        \draw[fill=dgrmred] (-0.4,-3) to (-0.4,-2) arc[x radius=0.25, y radius=0.75, start angle=180, end angle=0]
        to (0.1,-3);
        \node at (-2.25*0.5-1.4*0.5,0) {$\dots$}; 
        \node at (1.1*0.5+1.95*0.5,0) {$\dots$};
     \end{scope}
         \draw[dashed] (-3.5,2) to (2.7,2);
         \draw[dashed] (-3.5,-2) to (2.7,-2);
         \path (-3-0.25,-2) node[below, font=\scriptsize] {$1$};
         \path (-2.5-0.25,-2) node[below, font=\scriptsize] {$2$};
         \path (2.2+0.25,-2) node[below, font=\scriptsize] {$n$};
         \path (-0.4,-2) node[below, font=\scriptsize] {$i$};
  \end{tikzpicture}
 \qquad \text{($i$ even, $n$ odd)} \\
e_i & \longmapsto 
  \begin{tikzpicture}[xscale=.75,yscale=.25,baseline=(base)]
    \path (0,-6pt) coordinate (base);
    \begin{scope}
        \clip (-3.5,2) rectangle (2.7,-2);
        \draw[fill=dgrmred] (-3.8-0.25,3) rectangle (2.2+0.25,-3);
        \draw[fill=dgrmblu] (2.7+0.25,3) rectangle (2.2+0.25,-3);
        \draw[fill=dgrmblu] (-3-0.25,3) rectangle (-2.5-0.25,-3);
        \draw[fill=dgrmblu] (-1.4,3) rectangle (-0.9,-3);
        \draw[fill=dgrmblu] (0.6,3) rectangle (1.1,-3);
        \draw[fill=white] (-2-0.25,3) rectangle (-1.4,-3);
        \draw[fill=white] (1.1,3) rectangle (1.7+0.25,-3);
        \draw[fill=dgrmblu] (-0.4,3) to (-0.4,2) arc[x radius=0.25, y radius=0.75, start angle=180, end angle=360]
        to (0.1,3);
        \draw[fill=dgrmblu] (-0.4,-3) to (-0.4,-2) arc[x radius=0.25, y radius=0.75, start angle=180, end angle=0]
        to (0.1,-3);
        \node at (-2.25*0.5-1.4*0.5,0) {$\dots$}; 
        \node at (1.1*0.5+1.95*0.5,0) {$\dots$};
     \end{scope}
         \draw[dashed] (-3.5,2) to (2.7,2);
         \draw[dashed] (-3.5,-2) to (2.7,-2);
         \path (-3-0.25,-2) node[below, font=\scriptsize] {$1$};
         \path (-2.5-0.25,-2) node[below, font=\scriptsize] {$2$};
         \path (2.2+0.25,-2) node[below, font=\scriptsize] {$n$};
         \path (-0.4,-2) node[below, font=\scriptsize] {$i$};
  \end{tikzpicture}
 \qquad \text{($i$ odd, $n$ even)} \\
e_i & \longmapsto 
  \begin{tikzpicture}[xscale=.75,yscale=.25,baseline=(base)]
    \path (0,-6pt) coordinate (base);
    \begin{scope}
        \clip (-3.5,2) rectangle (2.7,-2);
        \draw[fill=dgrmred] (-3.8-0.25,3) rectangle (3,-3);
        \draw[fill=dgrmblu] (2.2+0.25,3) rectangle (1.7+0.25,-3);
        \draw[fill=dgrmblu] (-3-0.25,3) rectangle (-2.5-0.25,-3);
        \draw[fill=dgrmred] (-1.4,3) rectangle (-0.9,-3);
        \draw[fill=dgrmred] (0.6,3) rectangle (1.1,-3);
        \draw[fill=dgrmblu] (-0.9,-3) rectangle (0.6,3);
        \draw[fill=white] (-2-0.25,3) rectangle (-1.4,-3);
        \draw[fill=white] (1.1,3) rectangle (1.7+0.25,-3);
        \draw[fill=dgrmred] (-0.4,3) to (-0.4,2) arc[x radius=0.25, y radius=0.75, start angle=180, end angle=360]
        to (0.1,3);
        \draw[fill=dgrmred] (-0.4,-3) to (-0.4,-2) arc[x radius=0.25, y radius=0.75, start angle=180, end angle=0]
        to (0.1,-3);
        \node at (-2.25*0.5-1.4*0.5,0) {$\dots$}; 
        \node at (1.1*0.5+1.95*0.5,0) {$\dots$};
     \end{scope}
         \draw[dashed] (-3.5,2) to (2.7,2);
         \draw[dashed] (-3.5,-2) to (2.7,-2);
         \path (-3-0.25,-2) node[below, font=\scriptsize] {$1$};
         \path (-2.5-0.25,-2) node[below, font=\scriptsize] {$2$};
         \path (2.2+0.25,-2) node[below, font=\scriptsize] {$n$};
         \path (-0.4,-2) node[below, font=\scriptsize] {$i$};
  \end{tikzpicture} 
  \qquad  \text{($i,n$ even)} 
\end{align*}
\endgroup
Given an element $f \in 2\TL_R(\prescript{}{\sred}{n})$ and a two-colored Temperley--Lieb diagram $D$ we will write
\begin{equation*}
\coeff_{{} \in f} D
\end{equation*}
for the coefficient of $D$ when $f$ is written in the diagrammatic basis.

If $R$ is a commutative $A$-algebra for which $\JW_R(\prescript{}{\sred}{n})$ exists for all $n$, then the coefficients of $\JW_{R}(\prescript{}{\sred}{n})$ can be calculated inductively as follows.
Suppose $D$ is a two-colored Temperley--Lieb diagram in $2\TL_R(\prescript{}{\sred}{(n+1)})$.
Let $\hat{D}$ be the diagram with $n+2$ bottom boundary points and $n$ top boundary points obtained by folding down the strand connected to the top right boundary point of $D$.
If there is a strand connecting the $i$th and $(i+1)$th bottom boundary points of $\hat{D}$, let $D_i$ denote the two-colored Temperley--Lieb diagram with $n$ strands so obtained by deleting this cap. 
For example, if 
\begin{equation*}
D=\begin{gathered}
        \begin{tikzpicture}[xscale=.4,yscale=.2]
          \begin{scope}
            \clip (-3,3.5) rectangle (3,-3.5);
            \draw[fill=dgrmred] (2,4) rectangle (-3,-4);
            \draw[fill=dgrmblu] (2,4) rectangle (3,-4);
          \draw[fill=dgrmblu] (-2,-4) to (-2,4) to (-1,3.5) to[out=-90,in=90] (1,-3.5) to (1,-4) to (0,-3.5) to[out=90,in=0] (-.5,-2.5) to[out=180,in=90] (-1,-3.5) to (-2,-4);
          \draw[fill=dgrmblu] (0,4) to (0,3.5) to[out=-90,in=180] (.5,2.5) to[out=0,in=-90] (1,3.5) to (0,4);          
       \end{scope}
       \draw[dashed] (-3,3.5) to (3,3.5);
       \draw[dashed] (-3,-3.5) to (3,-3.5);
     \end{tikzpicture}
\end{gathered}
\end{equation*}
then
\begin{equation*}
\hat{D}=\begin{gathered}
        \begin{tikzpicture}[xscale=.4,yscale=.2]
          \begin{scope}
            \clip (-3,3.5) rectangle (3.5,-3.5);
            \draw[fill=dgrmred] (3.5,4) rectangle (-3,-4);
            \draw[fill=dgrmblu] (2,-4) to (2,-3.5) to[out=90,in=180] (2.5,-2.5) to[out=0,in=90] (3,-3.5) to (2,-4);
          \draw[fill=dgrmblu] (-2,-4) to (-2,4) to (-1,3.5) to[out=-90,in=90] (1,-3.5) to (1,-4) to (0,-3.5) to[out=90,in=0] (-.5,-2.5) to[out=180,in=90] (-1,-3.5) to (-2,-4);
          \draw[fill=dgrmblu] (0,4) to (0,3.5) to[out=-90,in=180] (.5,2.5) to[out=0,in=-90] (1,3.5) to (0,4);          
       \end{scope}
       \draw[dashed] (-3,3.5) to (3.5,3.5);
       \draw[dashed] (-3,-3.5) to (3.5,-3.5);
     \end{tikzpicture} \\
\end{gathered}
\end{equation*}
and
\begin{equation*}
D_2=\begin{gathered}
        \begin{tikzpicture}[xscale=.4,yscale=.2]
          \begin{scope}
            \clip (-3,3.5) rectangle (2,-3.5);
            \draw[fill=dgrmred] (3.5,4) rectangle (-3,-4);
            \draw[fill=dgrmblu] (-2,4) rectangle (-1,-4);
          \draw[fill=dgrmblu] (0,-4) to (0,-3.5) to[out=90,in=-180] (.5,-2.5) to[out=0,in=90] (1,-3.5) to (0,-4);          
          \draw[fill=dgrmblu] (0,4) to (0,3.5) to[out=-90,in=180] (.5,2.5) to[out=0,in=-90] (1,3.5) to (0,4);          
       \end{scope}
       \draw[dashed] (-3,3.5) to (2,3.5);
       \draw[dashed] (-3,-3.5) to (2,-3.5);
     \end{tikzpicture} \\
\end{gathered} \text{,} \qquad \qquad \qquad
D_5=\begin{gathered}
        \begin{tikzpicture}[xscale=.4,yscale=.2]
          \begin{scope}
            \clip (-3,3.5) rectangle (2,-3.5);
            \draw[fill=dgrmred] (3.5,4) rectangle (-3,-4);
          \draw[fill=dgrmblu] (-2,-4) to (-2,4) to (-1,3.5) to[out=-90,in=90] (1,-3.5) to (1,-4) to (0,-3.5) to[out=90,in=0] (-.5,-2.5) to[out=180,in=90] (-1,-3.5) to (-2,-4);
          \draw[fill=dgrmblu] (0,4) to (0,3.5) to[out=-90,in=180] (.5,2.5) to[out=0,in=-90] (1,3.5) to (0,4);          
       \end{scope}
       \draw[dashed] (-3,3.5) to (2,3.5);
       \draw[dashed] (-3,-3.5) to (2,-3.5);
     \end{tikzpicture} \\
\end{gathered} \text{.}
\end{equation*}

\begin{thm} \label{JWcoef}
Suppose $\JW_R(\prescript{}{\sred}{n})$ and $\JW_R(\prescript{}{\sred}{(n+1)})$ both exist.
Then $[n+1]_{\sred}$ is invertible, and we have
\begin{equation*}
\coeff_{{} \in \JW_{R}(\prescript{}{\sred}{(n+1)})} D=\sum_{i} \frac{[i]_{\upur}}{[n+1]_{\sred}}\coeff_{{} \in \JW_{R}(\prescript{}{\sred}{n})} D_i \text{,}
\end{equation*}
where the sum is taken over all positions $i$ where $D_i$ is defined, and $\upur$ is the color of the deleted cap.
\end{thm}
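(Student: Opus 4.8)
\emph{Overall approach.} The plan is to establish the two-colored analogue of Wenzl's recursion for $\JW_R(\prescript{}{\sred}{(n+1)})$ and then read off coefficients. Write $J:=\JW_R(\prescript{}{\sred}{n})\otimes 1\in 2\TL_R(\prescript{}{\sred}{(n+1)})$ for the projector with a strand adjoined on the right, and let $\vgrn$ denote the color of the region between the last two strands. Expanding $\JW_R(\prescript{}{\sred}{n})$ as $1$ plus an $R$-combination of monomials in $e_1,\dots,e_{n-1}$ and using $e_i\JW_R(\prescript{}{\sred}{(n+1)})=0$ --- together with the top--bottom reflection anti-involution, which also gives $\JW_R(\prescript{}{\sred}{(n+1)})e_i=0$ --- shows that $\JW_R(\prescript{}{\sred}{(n+1)})$ is absorbed by $J$ on both sides, so that $\JW_R(\prescript{}{\sred}{(n+1)})=J\JW_R(\prescript{}{\sred}{(n+1)})J$. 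A short planarity argument identifies $1$ and $e_n$ as the only two-colored Temperley--Lieb diagrams $D$ on $n+1$ strands with $JDJ\neq 0$, so $J\,2\TL_R(\prescript{}{\sred}{(n+1)})\,J=RJ+R\,Je_nJ$, and taking the coefficient of $1$ forces $\JW_R(\prescript{}{\sred}{(n+1)})=J+b\,Je_nJ$ for a unique $b\in R$.

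\emph{The key computation.} Next is the two-colored partial-trace identity $e_nJe_nJ=\nu_n\,e_nJ$, where $\nu_n\in R$ is the coefficient of the identity in the partial trace $\pTr\JW_R(\prescript{}{\sred}{n})$ of $\JW_R(\prescript{}{\sred}{n})$ over its last strand: a diagram chase writes $e_nJe_n$ as $\pTr\JW_R(\prescript{}{\sred}{n})$ placed beside $e_n$, and absorption by $\JW_R(\prescript{}{\sred}{n})$ finishes it. Only the diagrams $1_{n-1}$ and $e_{n-1}$ contribute to $\nu_n$ (as $\pTr 1_n$ is a single circle and $\pTr e_{n-1}=1_{n-1}$), so $\nu_n=-[2]_{c}+\coeff_{{}\in\JW_R(\prescript{}{\sred}{n})}e_{n-1}$ for a suitable color $c$; inserting the value of $\coeff_{{}\in\JW_R(\prescript{}{\sred}{n})}e_{n-1}$ (the inductive hypothesis one level down) and applying \eqref{eq:twocolqnum} yields $[n]_{\vgrn}\,\nu_n=-[n+1]_\sred$. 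Here $[n]_{\vgrn}$, which by \eqref{eq:colordependence} equals $[n]_\sred=\qbinom{n}{1}_\sred$, is invertible whenever $\JW_R(\prescript{}{\sred}{n})$ exists (cf.\ \cref{existence}). Now $0=e_n\JW_R(\prescript{}{\sred}{(n+1)})=(1+b\nu_n)\,e_nJ$, and comparing coefficients of $e_n$ (noting $\coeff_{{}\in e_nJ}e_n=1$) gives $1+b\nu_n=0$; hence $b$ is a unit, $[n]_{\vgrn}=b[n+1]_\sred$ forces $[n+1]_\sred$ to be invertible, $b=[n]_{\vgrn}/[n+1]_\sred$, and
\[ \JW_R(\prescript{}{\sred}{(n+1)})=J+\frac{[n]_{\vgrn}}{[n+1]_\sred}\,J\,e_n\,J\text{.} \]
(The invertibility of $[n+1]_\sred=\qbinom{n+1}{1}_\sred$ also drops out of \cref{existence} directly.)

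\emph{Reading off coefficients.} For a diagram $D$ on $n+1$ strands, $\coeff_{{}\in J}D$ equals $\coeff_{{}\in\JW_R(\prescript{}{\sred}{n})}D'$ when $D=D'\otimes 1$ and vanishes otherwise; this reproduces exactly the $i=n+1$ summand of $\sum_i\frac{[i]_{\upur}}{[n+1]_\sred}\coeff_{{}\in\JW_R(\prescript{}{\sred}{n})}D_i$, which is present precisely when $\hat D$ has a bottom cap at position $n+1$, i.e.\ when the rightmost strand of $D$ is vertical, and there $D_{n+1}=D'$ with $[n+1]_{\upur}=[n+1]_\sred$. Expanding the two factors $\JW_R(\prescript{}{\sred}{n})$ inside $Je_nJ$ and tracking how each product $(A\otimes 1)\,e_n\,(B\otimes 1)$ closes up into a single two-colored diagram then matches $\frac{[n]_{\vgrn}}{[n+1]_\sred}\coeff_{{}\in Je_nJ}D$ with the remaining summands $i=1,\dots,n$; this is precisely the recipe ``fold the rightmost strand down, then delete the bottom cap at position $i$'' defining $D_i$, with $[i]_{\upur}$ appearing as the value of the circle created in the partial trace at position $i$. (The induction starts at small $n$, where the statement is immediate.)

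\emph{Main obstacle.} The genuine difficulty is the partial-trace identity $[n]_{\vgrn}\,e_nJe_nJ=-[n+1]_\sred\,e_nJ$ together with the color bookkeeping it requires: this is the two-colored refinement of the classical fact that closing one strand of $\JW_n$ produces $\tfrac{[n+1]}{[n]}\JW_{n-1}$, and obtaining the two-colored quantum numbers --- and the correct sign --- on the nose, with the parity of $n$ dictating whether $\vgrn$, $c$, and the various cap-region colors are $\sred$ or $\tblu$, is where the real care lies. The diagram-matching of the last step is laborious but mechanical.
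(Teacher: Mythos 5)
Your proof follows the same route the paper delegates to: it is the two-colored Wenzl recursion, and the steps you identify (absorption of $\JW_R(\prescript{}{\sred}{(n+1)})$ by $J = \JW_R(\prescript{}{\sred}{n})\otimes 1$, the planarity argument giving $\JW_R(\prescript{}{\sred}{(n+1)})=J+b\,Je_nJ$, the partial-trace identity $e_nJe_nJ=\nu_n e_nJ$, and the color bookkeeping via \eqref{eq:twocolqnum} and \eqref{eq:colordependence}) are exactly the ingredients in \cite{morrison-JW}, \cite{frenkelkhovanov} and \cite[(6.29)]{ew-localizedcalc}. The color identity $[n]_{\vgrn}=[n]_{\sred}$ and the recursion computation $[n]_{\sred}\nu_n=-[n+1]_{\sred}$ both check out.

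There is, however, one genuine gap: you justify the invertibility of $[n]_{\vgrn}=[n]_{\sred}=\qbinom{n}{1}_{\sred}$ by appealing to \cref{existence}, but in this paper \cref{existence} is \emph{proved from} \cref{JWcoef} (via \cref{onecolvalbound}, \cref{qbinominvcoef}, etc.), so that reference is circular. You need to get the invertibility of $[n]_{\sred}$ internally from the induction. The fix is available with what you already have: from $1+b\nu_n=0$ you know $\nu_n$ is a unit, and since $\pTr\JW_R(\prescript{}{\sred}{n})$ lies in the right annihilator $\{f : e_if=0,\, 1\le i\le n-2\}$ inside $2\TL_R(\prescript{}{\sred}{(n-1)})$, the element $\nu_n^{-1}\pTr\JW_R(\prescript{}{\sred}{n})$ is a two-colored Jones--Wenzl projector, so $\JW_R(\prescript{}{\sred}{(n-1)})$ exists. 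Now the inductive hypothesis (the theorem one level down, for the pair $\JW_R(\prescript{}{\sred}{(n-1)})$, $\JW_R(\prescript{}{\sred}{n})$) yields both that $[n]_{\sred}$ is invertible and the coefficient formula for $\JW_R(\prescript{}{\sred}{n})$ that you feed into $\nu_n=-[2]_c+\coeff_{{}\in\JW_R(\prescript{}{\sred}{n})}e_{n-1}$. Without this step, the chain ``$b$ is a unit $\Rightarrow [n+1]_{\sred}$ is a unit'' does not close, because $[n+1]_{\sred}=-[n]_{\sred}\nu_n$ only forces $[n+1]_{\sred}\in R^{\times}$ once $[n]_{\sred}\in R^{\times}$ is known. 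With the induction set up this way, the rest of your argument goes through and reproduces the stated formula.
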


\begin{proof}
The argument in the one-color setting (see \cite[Proposition~4.1]{morrison-JW} or \cite[Corollary~3.7]{frenkelkhovanov}) follows essentially unchanged from \cite[(6.29)]{ew-localizedcalc}.
\end{proof}

By a similar computation it can be shown that $\JW_{\Frac A}(\prescript{}{\sred}{n})$ exists for all $n \in \NN$ (see e.g.~\cite[Theorem~6.14]{ew-localizedcalc}).
We will carefully show later that this computation is ``generic'', i.e.~if $\JW_R(\prescript{}{\sred}{n})$ exists, then its coefficients are specializations of the coefficients of $\JW_{\Frac A}(n)$.

The existence criterion in \cref{existence} is known to hold in the one-color setting, i.e.~when the images of $x_{\sred}$ and $x_{\tblu}$ in $R$ are equal. 
In these circumstances we write $\TL_R(n)$ and $\JW_R(n)$ for the one-color Temperley--Lieb algebra and Jones--Wenzl projector.

\begin{thm}[{\cite[Theorem~A.2]{el-univcox}}] \label{existence-onecol}
Suppose $R$ is a commutative $A$-algebra which factors through $\overline{A}$. 
Then $\JW_R(n)$ exists if and only if the one-color quantum binomial coefficients
\begin{equation*}
\qbinom{n}{k}=\frac{[n]!}{[k]![n-k]!}=\frac{[n][n-1]\dotsm [n-k+1]}{[k][k-1] \dotsm [1]}
\end{equation*}
are invertible in $R$ for all integers $0 \leq k \leq n$.
\end{thm}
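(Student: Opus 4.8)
The plan is to reduce both directions to an analysis of the denominators of the \emph{generic} Jones--Wenzl projector. Running the computation indicated after \cref{JWcoef} over $\overline A = \ZZ[x]$ rather than $A$ shows that $\JW_{\QQ(x)}(n) = \JW_{\Frac \overline A}(n)$ exists; write it as $\sum_D c_D D$ in the diagrammatic basis, with $c_D \in \QQ(x)$, and let $d_n \in \ZZ[x]$ be a common denominator of the $c_D$. Let $S := \ZZ[x]_\Sigma$ be the localization of $\ZZ[x]$ at the multiplicative set $\Sigma$ generated by $\qbinom{n}{0}, \dotsc, \qbinom{n}{n}$. The heart of the matter is the claim that each $c_D$ lies in $S$; equivalently, that $d_n$ divides a power of $\prod_{k=1}^{n-1} \qbinom{n}{k}$.

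Granting the claim, the ``if'' direction is quick. The element $\JW_S(n) := \sum_D c_D D \in \TL_S(n)$ satisfies $e_i \JW_S(n) = 0$ and $\JW_S(n)^2 = \JW_S(n)$ and has coefficient of $1$ equal to $1$, since all of these identities already hold in the larger algebra $\TL_{\QQ(x)}(n)$. If $R$ is as in the statement and every $\qbinom{n}{k}$ is invertible in $R$, the structure map $\ZZ[x] \to R$ extends uniquely to a ring homomorphism $S \to R$, and the image of $\JW_S(n)$ in $\TL_R(n)$ inherits these three properties; it is nonzero because its coefficient of $1$ is $1$, so it is $\JW_R(n)$, which therefore exists.

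For the ``only if'' direction, suppose $\JW_R(n)$ exists. An element of a commutative ring is a unit as soon as it lies in no maximal ideal, and base change along a quotient $R \to R/\mathfrak m$ sends $\JW_R(n)$ to a nonzero element with the defining properties of $\JW_{R/\mathfrak m}(n)$; so it suffices to prove that, for a field $F$, existence of $\JW_F(n)$ forces every $\qbinom{n}{k}$ to be nonzero in $F$. One way to see this is to note that over any field the system $e_i P = 0$ $(1 \le i \le n-1)$ together with ``coefficient of $1$ is $1$'' has at most one solution, because $\{P : e_i P = 0 \ \forall i\}$ --- the right annihilator of the ideal spanned by the non-identity diagrams, which over $\QQ(x)$ is spanned by $\JW_{\QQ(x)}(n)$ --- is one-dimensional; by Cramer's rule the solution, when it exists, is obtained by dividing $\ZZ[x]$-polynomials (evaluated in $F$) by the specialization of $d_n$, so $\JW_F(n)$ can exist only if $d_n \ne 0$ in $F$. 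Combined with the sharpened form of the claim, that $d_n$ and $\prod_{k=1}^{n-1} \qbinom{n}{k}$ have the same irreducible factors in $\ZZ[x]$, this forces every $\qbinom{n}{k}$ to be nonzero in $F$. (Alternatively, the field case is classical and may be extracted from the representation theory of $\TL_F(n)$, as in \cite{ridout-st-aubin}, together with the quantum analogue of Lucas's theorem; it is also subsumed by the genericity of $\JW_{\QQ(x)}(n)$ discussed after \cref{JWcoef}.)

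The main obstacle is the claim itself. The recursion of \cref{JWcoef} only yields the weaker bound that $d_n$ divides a power of $[2][3] \dotsm [n]$, and the resulting localization $\ZZ[x][\,[2]^{-1}, \dotsc, [n]^{-1}]$ is in general strictly larger than $S$ --- already for $n = 5$, where none of $[2], [3], [4]$ is a unit of $S$ --- so one must show that the excess quantum-number factors cancel out of the reduced denominators of the $c_D$ and identify exactly which irreducible factors remain. I would establish this using the closed formulas of Frenkel--Khovanov \cite{frenkelkhovanov} and Morrison \cite{morrison-JW}, which express each $c_D$ as a signed product of ratios of quantum numbers; a telescoping of these ratios reduces the denominator to a product of $\qbinom{n}{k}$'s, and the quantum Lucas theorem then pins down which binomials occur, giving the sharpened claim as well.
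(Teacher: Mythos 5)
The paper does not prove this theorem---it cites it as \cite[Theorem~A.2]{el-univcox} and then remarks explicitly that none of the known proofs generalize easily to the two-colored setting. Your proposal is therefore an attempt to reprove a result the paper treats as a black box. Read against that background, two genuine gaps remain.

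First, the central claim---that the reduced denominator $d_n$ of the generic Jones--Wenzl coefficients has exactly the same irreducible factors as $\prod_{k=1}^{n-1}\qbinom{n}{k}$---is not established. You correctly identify this as ``the main obstacle'' and gesture at the Frenkel--Khovanov and Morrison closed formulas plus a ``telescoping of ratios'' and a quantum Lucas theorem, but none of this is carried out; the proposal leaves the hard direction of the theorem as a sketch. The paper avoids this entirely (by citing), and its proof of \cref{existence} instead derives the two-colored denominator bound (\cref{twocolvalbound}) \emph{from} the one-colored \cref{existence-onecol}, so nothing in the paper can be leaned on here without circularity---indeed, the ``genericity'' you mention as a fallback is \cref{genericcoefs}, which is proved downstream of \cref{existence} and hence of \cref{existence-onecol}.

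Second, the ``only if'' direction via Cramer's rule is not logically sound as written. The vanishing of $d_n$ in a field $F$ tells you that the generic Cramer's-rule formula cannot be specialized, but it does not rule out solutions to the (overdetermined) system $e_iP=0$, $\coeff_{\in P}1=1$ over $F$---when the determinant vanishes, the numerators might vanish too, or the solution space might jump in dimension. What is actually needed, and what the paper supplies in the analogous spot of its proof of \cref{existence}, is the observation of \cref{qbinominvcoef}: for each $k$ there is a \emph{specific} diagram $D$ with $\coeff_{\in\JW_Q(n)}D=\qbinom{n}{k}^{-1}$ on the nose. After clearing denominators by $g_n$, these coefficients generate the unit ideal of $\ZZ[x]$ (by \cref{qbinominvideal}), so the cleared element $g_n\JW$ has nonzero image in $\TL_F(n)$, lies in $T_F=F\JW_F(n)$, and comparing coefficients of $1$ forces $g_n\neq 0$ in $F$. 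Your argument, lacking this step, does not actually show that existence of $\JW_F(n)$ forces $d_n\neq 0$ in $F$.
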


In light of the ``generic'' nature of the coefficients of $\JW_R(\prescript{}{\sred}{n})$, we can interpret \cref{existence-onecol} as description of the denominators of the coefficients of $\JW_{\Frac \overline{A}}(n)$.
Unfortunately, none of the known proofs of this result (most of which use connections to Lie theory in a crucial way) generalize easily to the two-colored setting.

Finally, we will give an alternative criterion for checking rotatability.
For $f \in 2\TL_R(\prescript{}{\sred}{n})$ define the \defnemph{partial trace} of $f$ to be
\begin{align*}
\pTr(f)& =\begin{gathered}
\begin{tikzpicture}[xscale=.6,yscale=0.8]
        \begin{scope}
          \clip (-1.5,1) rectangle (3,-1);
  \draw[fill=dgrmblu] (1,2) rectangle (3.5,-2);
  \draw[fill=dgrmred] (1.5,.6) to[out=90,in=180] (2,.8)
  to[out=0,in=90] (2.5,0) to[out=-90,in=0] (2,-.8) to[out=180,in=-90]
  (1.5,-.6) to (1.5,.6);
  \draw[fill=dgrmred] (0,2) rectangle (-2,-2);
  \draw[fill=dgrmblu] (-1,2) rectangle (-.5,-2);
  \draw[fill=white] (-1.25,.6) rectangle (1.9,-.6);
  \node at (.25,0) {$f$};
  \node at (.55,.8) {$\dots$};
  \node at (.55,-.8) {$\dots$};
  \end{scope}
  \draw[dashed] (-1.5,1) to (3,1);
  \draw[dashed] (-1.5,-1) to (3,-1);
\end{tikzpicture}
\end{gathered} & & 
\text{($n$ odd)} \\
\pTr(f)& =\begin{gathered}
\begin{tikzpicture}[xscale=.6,yscale=0.8]
        \begin{scope}
          \clip (-1.5,1) rectangle (3,-1);
  \draw[fill=dgrmred] (1,2) rectangle (3.5,-2);
  \draw[fill=dgrmblu] (1.5,.6) to[out=90,in=180] (2,.8)
  to[out=0,in=90] (2.5,0) to[out=-90,in=0] (2,-.8) to[out=180,in=-90]
  (1.5,-.6) to (1.5,.6);
  \draw[fill=dgrmred] (0,2) rectangle (-2,-2);
  \draw[fill=dgrmblu] (-1,2) rectangle (-.5,-2);
  \draw[fill=white] (-1.25,.6) rectangle (1.9,-.6);
  \node at (.25,0) {$f$};
  \node at (.55,.8) {$\dots$};
  \node at (.55,-.8) {$\dots$};
  \end{scope}
  \draw[dashed] (-1.5,1) to (3,1);
  \draw[dashed] (-1.5,-1) to (3,-1);
\end{tikzpicture}
\end{gathered} & &
\text{($n$ even)}
\end{align*}
From the definition of the Jones--Wenzl projector, it is easy to see that $\JW_R(\prescript{}{\sred}{n})$ is rotatable if and only if $\pTr(\JW_R(\prescript{}{\sred}{n}))=0$.
Using entirely standard techniques (e.g.~\cite[\S 6.6]{ew-localizedcalc}), one can show that
\begin{equation}
\pTr(\JW_R(\prescript{}{\sred}{n}))=-\frac{[n+1]_{\sred}}{[n]_{\sred}}\JW_R(\prescript{}{\sred}{(n-1)}) \label{eq:genericpTr}
\end{equation}
when both $\JW_R(\prescript{}{\sred}{n})$ and $\JW_R(\prescript{}{\sred}{(n-1)})$ exist. 
This gives the following partial rotatability criterion.

\begin{prop} \label{genericrotatability}
Suppose both $\JW_R(\prescript{}{\sred}{n})$ and $\JW_R(\prescript{}{\sred}{(n-1)})$ exist. 
Then $\JW_R(\prescript{}{\sred}{n})$ is rotatable if and only if $[n+1]_{\sred}=0$.
\end{prop}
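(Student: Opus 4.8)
The plan is to read the proposition off directly from the partial trace formula \eqref{eq:genericpTr} together with the rotatability criterion recorded just above it. Since both $\JW_R(\prescript{}{\sred}{n})$ and $\JW_R(\prescript{}{\sred}{(n-1)})$ exist, \cref{JWcoef} (applied to the pair $\JW_R(\prescript{}{\sred}{(n-1)})$, $\JW_R(\prescript{}{\sred}{n})$, i.e.\ with $n$ there replaced by $n-1$) shows that $[n]_{\sred}$ is invertible in $R$. Hence the right-hand side of \eqref{eq:genericpTr} makes sense, and we have
\[
\pTr(\JW_R(\prescript{}{\sred}{n})) = -\frac{[n+1]_{\sred}}{[n]_{\sred}}\,\JW_R(\prescript{}{\sred}{(n-1)}).
\]
By the remark preceding the proposition, $\JW_R(\prescript{}{\sred}{n})$ is rotatable if and only if this element is zero.

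It then remains to observe that this element vanishes precisely when $[n+1]_{\sred}=0$. If $[n+1]_{\sred}=0$ this is immediate. Conversely, suppose $-\tfrac{[n+1]_{\sred}}{[n]_{\sred}}\JW_R(\prescript{}{\sred}{(n-1)})=0$. Taking the coefficient of the identity diagram on both sides — which for $\JW_R(\prescript{}{\sred}{(n-1)})$ is the coefficient of $1$, namely $1$, by the definition of the two-colored Jones--Wenzl projector — yields $[n+1]_{\sred}[n]_{\sred}^{-1}=0$ in $R$, and multiplying by the unit $[n]_{\sred}$ gives $[n+1]_{\sred}=0$. Combining the two equivalences proves the proposition.

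There is no genuine obstacle here: all of the substantive content is already available, namely the formula \eqref{eq:genericpTr}, the fact that rotatability is equivalent to the vanishing of the partial trace, and \cref{JWcoef}. The only points requiring (minor) care are verifying that $[n]_{\sred}$ is invertible before dividing by it, and using that $\JW_R(\prescript{}{\sred}{(n-1)})$ is a nonzero idempotent with identity-coefficient $1$ in order to pass back from the vanishing of a scalar multiple of it to the vanishing of $[n+1]_{\sred}$.
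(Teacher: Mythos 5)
Your proof is correct and takes essentially the same approach as the paper, which derives the proposition directly from \eqref{eq:genericpTr} and the observation that rotatability is equivalent to vanishing of the partial trace. You fill in the two minor details the paper leaves implicit (invertibility of $[n]_{\sred}$ via \cref{JWcoef}, and extracting the coefficient of $1$ to pass from the vanishing of a scalar multiple of $\JW_R(\prescript{}{\sred}{(n-1)})$ to the vanishing of $[n+1]_{\sred}$), both of which are handled appropriately.
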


The key to proving the full rotatability criterion will be to interpret \eqref{eq:genericpTr} generically.

\section{Principal ideals}

In this section, we show that several ideals generated by certain two-colored quantum numbers and binomial coefficients are principal.
Recall that for ordinary quantum numbers, one can show that if $d|n$ then $[d]|[n]$. 
Using $\eqref{eq:colordependence}$ it immediately follows that $[d]_{\sred}|[n]_{\sred}$.

\begin{lem}[Quantum B\'{e}zout's identity] \label{qbezout}
Let $m,n \in \NN$. 
There exist polynomials $a,b \in A$ such that 
\begin{equation*}
a[m]_{\sred}+b[n]_{\sred}=[\gcd(m,n)]_{\sred} \text{.}
\end{equation*}
\end{lem}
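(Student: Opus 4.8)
The plan is to run the Euclidean algorithm on the pair $(m,n)$, using an addition formula for two-colored quantum numbers in place of ordinary integer subtraction. After possibly interchanging $m$ and $n$ we may assume $m \le n$, and by the color-swap symmetry of \eqref{eq:twocolqnum} it suffices to produce the combination for the color $\sred$. The engine of the argument is the two-colored analogue of the classical identity $[a][b] - [a-1][b-1] = [a+b-1]$: for all $1 \le m \le n$,
\[
[n]_{\sred} = [m]_{\sred}\,[n-m+1]_{\vgrn} - [m-1]_{\sred}\,[n-m]_{\vgrn'},
\]
where $(\vgrn,\vgrn') = (\sred,\tblu)$ if $m$ is odd and $(\vgrn,\vgrn') = (\tblu,\sred)$ if $m$ is even. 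I would prove this either by a simultaneous induction on $n$ for both colors using \eqref{eq:twocolqnum}, or --- more cleanly --- by substituting \eqref{eq:colordependence} into the one-colored identity and checking that the stray factors of $[2]_{\sred} = x_{\sred}$ and $[2]_{\tblu} = x_{\tblu}$ balance on the two sides.

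The B\'{e}zout identity is then proved by strong induction on $m+n$, carried out for both colors at once. If $m \le 1$, or more generally if $m \mid n$, then $\gcd(m,n) = m$ and the combination $1\cdot[m]_{\sred} + 0\cdot[n]_{\sred}$ works (using $[1]_{\sred} = 1$, $[0]_{\sred} = 0$); so assume $2 \le m < n$. The addition formula rearranges to
\[
[m-1]_{\sred}\,[n-m]_{\vgrn'} = [m]_{\sred}\,[n-m+1]_{\vgrn} - [n]_{\sred} \in \bigl([m]_{\sred}, [n]_{\sred}\bigr).
\]
Since $\gcd(m-1,m) = 1$, the inductive hypothesis for the pair $(m-1,m)$ gives $c,d \in A$ with $c[m]_{\sred} + d[m-1]_{\sred} = 1$; multiplying by $[n-m]_{\vgrn'}$ and using the previous display shows $[n-m]_{\vgrn'} \in ([m]_{\sred},[n]_{\sred})$ with explicit coefficients in $A$. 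As $\gcd(m,n-m) = \gcd(m,n)$, we would then like to apply the inductive hypothesis to $(m,n-m)$. If $m$ is even then $\vgrn' = \sred$, so $[n-m]_{\sred}$ lies in our ideal and we are done. If $m$ is odd then $\vgrn' = \tblu$: when $n-m$ is odd, \eqref{eq:colordependence} gives $[n-m]_{\tblu} = [n-m]_{\sred}$ and we conclude as before; when $n-m$ is even, we instead invoke the color-swapped inductive hypothesis for $(m,n-m)$ to place $[\gcd(m,n-m)]_{\tblu}$ in the ideal generated by $[m]_{\tblu} = [m]_{\sred}$ and $[n-m]_{\tblu}$ (both of which already lie in $([m]_{\sred},[n]_{\sred})$), and then note that $\gcd(m,n-m) = \gcd(m,n)$ is odd, so $[\gcd(m,n)]_{\tblu} = [\gcd(m,n)]_{\sred}$ by \eqref{eq:colordependence}. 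Unwinding the recursion yields the required $a,b \in A$ in every case; the relevant sub-instances $(m-1,m)$ and $(m,n-m)$ both have strictly smaller index sum, so the induction is well founded.

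I expect the main obstacle to be exactly this color bookkeeping: subtracting an odd number from $n$ flips the relevant color, so the naive single-color recursion does not close, and one has to repair it using \eqref{eq:colordependence} together with color-swap symmetry; pinning down the colors in the addition formula is the crux. A tidier but less self-contained route avoids all of this by first using \eqref{eq:colordependence} to reduce the statement --- in three cases according to the parities of $m$ and $n$, and keeping track of the stray factors of $x_{\sred}$ and $x_{\tblu}$ --- to the purely one-colored quantum B\'{e}zout identity $[\gcd(m,n)] \in ([m],[n])$ in $\ZZ[x]$, which is then proved by the same Euclidean induction using only $[a][b] - [a-1][b-1] = [a+b-1]$.
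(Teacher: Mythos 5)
Your proof is correct, but it takes a meaningfully different route from the paper's. You start from the two-colored analogue of the rank-one addition formula $[m][n-m+1]-[m-1][n-m]=[n]$, which only gives $[m-1]_{\sred}[n-m]_{\vgrn'}$ in the ideal rather than $[n-m]_{\sred}$ directly. This forces you to bootstrap via the auxiliary coprime Bézout identity for $(m-1,m)$, and then to do careful color bookkeeping: when $m$ is odd and $n-m$ is even, $[n-m]_{\tblu}$ and $[n-m]_{\sred}$ differ by the non-unit $x_{\tblu}/x_{\sred}$, so you cannot just recurse on the $\sred$-colored statement; instead you invoke the color-swapped inductive hypothesis and then convert $[\gcd(m,n)]_{\tblu}$ back to $[\gcd(m,n)]_{\sred}$ by observing that $\gcd(m,n)$ divides the odd number $m$ and is therefore odd. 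I checked that all of these steps go through, including the well-foundedness of the induction on $m+n$ for both the $(m-1,m)$ and the $(m,n-m)$ sub-instances, and the color assignments $(\vgrn,\vgrn')$ in your addition formula, which are correct in all four parity cases.

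The paper avoids all of this by using the Elias--Williamson product expansion $[a]_{\tblu}[b]_{\sred}=[a+b-1]_{\sred}+[a+b-3]_{\sred}+\cdots$ to prove directly that $[n-1]_{\tblu}[m]_{\sred}-[m-1]_{\tblu}[n]_{\sred}=[n-m]_{\sred}$ (with both subscripts $\sred$ on the left when $m$ and $n$ are both odd). This single identity puts $[n-m]_{\sred}$, in the correct color, straight into the ideal $([m]_{\sred},[n]_{\sred})$, so the Euclidean recursion closes within the $\sred$-colored statement and needs no appeal to the color-swapped lemma or to a $\gcd=1$ special case. Your alternative sketch at the end --- reducing to the one-colored Bézout identity over $\overline{A}$ via \eqref{eq:colordependence} and tracking the stray factors of $x_{\sred}$, $x_{\tblu}$ --- is closer in spirit to how the paper later handles several other statements, and would also work, but again requires a parity case split. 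In short: your argument is sound, but the paper's choice of a ``one-step'' identity (one whose output already carries the right color) is what makes its proof shorter; yours buys self-containedness at the cost of the color-repair machinery you correctly identified as the crux.
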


\begin{proof}
Suppose without loss of generality that $m<n$. 
We will show that the ideal in $A$ generated by $[m]_{\sred}$ and $[n]_{\sred}$ contains $[n-m]_{\sred}$.
If $m$ and $n$ are not both odd, then
\begin{align*}
\begin{split}
[n-1]_{\tblu}[m]_{\sred} - [m-1]_{\tblu}[n]_{\sred} & =
([m+n-2]_{\sred}+[m+n-4]_{\sred}+\dotsb+[-(n-m)+2]_{\sred}) \\
& \quad -([m+n-2]_{\sred}+[m+n-4]_{\sred}+\dotsb+[n-m+2]_{\sred})
\end{split} \\
& =[n-m]_{\sred}+[n-m-2]_{\sred}+\dotsb+[-(n-m)+2]_{\sred} \\
& =[n-m]_{\sred}
\end{align*}
by \cite[(6.5a)--(6.5c)]{ew-localizedcalc}. 
If $m$ and $n$ are both odd, a similar calculation yields 
\begin{equation*}
[n-1]_{\sred}[m]_{\sred} - [m-1]_{\sred}[n]_{\sred}=[n-m]_{\sred} \text{.}
\end{equation*}
By repeating this step multiple times, we can run Euclid's algorithm, and the result follows.
\end{proof}

Next we introduce the cyclotomic parts of quantum numbers, which are roughly analogous to cyclotomic polynomials.
Recall that the one-color quantum numbers are renormalizations of Chebyshev polynomials of the second kind. 
More precisely, if we evaluate a quantum number $[n]$ at $x=2\cos \theta$, we obtain
\begin{equation*}
[n](2\cos \theta) = \frac{\sin n\theta}{\sin \theta} \text{.}
\end{equation*}
Since $[n]$ is a monic polynomial in $x$ of degree $n-1$ we conclude that 
\begin{equation*}
[n]=\prod_{k=1}^{n-1} \left(x-2\cos \frac{k\pi}{n}\right)
\end{equation*}
We define the \defnemph{cyclotomic part} of the one-color quantum number $[n]$ to be the polynomial
\begin{equation*}
\Theta_n = \prod_{\substack{1 \leq k < n\\ (k,n)=1}} \left(x-2\cos \frac{k\pi}{n}\right) \text{.}
\end{equation*}

\begin{lem} \label{cyclofacts}
Let $n \in \NN$. 
We have
\begin{enumerate}[label={\rm (\roman*)}]
\item \label{item:degree} $\Theta_n \in \ZZ[x]$, and $\deg \Theta_n=\varphi(n)$ when $n>1$;

\item \label{item:prod} $[n]=\prod_{k|n} \Theta_n$;

\item \label{item:mobinv} $\Theta_n=\prod_{k|n} [k]^{\mu(n/k)}$, where $\mu:\NN \rightarrow \{\pm 1\}$ is the M\"{o}bius function.
\end{enumerate}
Moreover, if $n>2$ then we also have $\Theta_n(x)=\Psi_n(x^2)$, where $\Psi_n \in \ZZ[x]$ is the minimal polynomial of $4\cos^2(\pi/n)$.
\end{lem}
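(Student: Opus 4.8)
The plan is to prove (ii) first by a direct rearrangement of the root factorization of $[n]$, deduce (i) from it, obtain (iii) by M\"obius inversion, and then settle the ``moreover'' clause using the Galois theory of cyclotomic fields.

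For (ii), I would group the linear factors in $[n]=\prod_{k=1}^{n-1}(x-2\cos(k\pi/n))$ according to $d=\gcd(k,n)$: writing $k=dj$ and $n=dm$, the conditions $1\le k\le n-1$, $\gcd(k,n)=d$ become $1\le j\le m-1$, $\gcd(j,m)=1$, and $2\cos(k\pi/n)=2\cos(j\pi/m)$, so the factors with $\gcd(k,n)=d$ multiply to $\Theta_{n/d}$. Since every $k\in\{1,\dots,n-1\}$ has $\gcd(k,n)=d$ for a unique divisor $d\mid n$ with $d<n$, and every such $d$ arises, this gives $[n]=\prod_{d\mid n,\,d<n}\Theta_{n/d}=\prod_{m\mid n,\,m>1}\Theta_m$, which with the convention $\Theta_1=1$ is exactly (ii).

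For (i), the degree count $\deg\Theta_n=\#\{1\le k<n:\gcd(k,n)=1\}=\varphi(n)$ for $n>1$ is immediate, and $\Theta_n$ is monic as a product of monic linear factors. I would prove integrality by induction on $n$: $\Theta_1=1\in\ZZ[x]$, and for $n>1$ part (ii) gives $[n]=\Theta_n\cdot\prod_{m\mid n,\,m<n}\Theta_m$ in $\RR[x]$, where the second factor is, by induction, a monic element of $\ZZ[x]$; since $[n]$ is a monic element of $\ZZ[x]$, polynomial division forces $\Theta_n\in\ZZ[x]$. Part (iii) is then the multiplicative form of M\"obius inversion applied to the identity $[n]=\prod_{k\mid n}\Theta_k$ inside the group $\QQ(x)^{\times}$ (each $[k]$ being nonzero), which yields $\Theta_n=\prod_{k\mid n}[k]^{\mu(n/k)}$.

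For the ``moreover'' clause, the key point is that for $n>2$ the map $k\mapsto n-k$ is a fixed-point-free involution of $\{1\le k<n:\gcd(k,n)=1\}$ (a fixed point would force $n=2k$, hence $\gcd(k,n)=k>1$), and $2\cos((n-k)\pi/n)=-2\cos(k\pi/n)$; so the $\varphi(n)$ distinct roots of the monic polynomial $\Theta_n$ occur in pairs $\pm r$, whence $\Theta_n(x)=Q_n(x^2)$ for a monic $Q_n$ of degree $\varphi(n)/2$ whose roots are the $\varphi(n)/2$ distinct numbers $4\cos^2(k\pi/n)$, each necessarily simple. On the other hand, writing $\zeta=e^{2\pi i/n}$ we have $4\cos^2(k\pi/n)=2+\zeta^{k}+\zeta^{-k}$, so $\alpha:=4\cos^2(\pi/n)=2+\zeta+\zeta^{-1}$ is an algebraic integer; since $\QQ(\zeta)/\QQ$ is Galois with group $(\ZZ/n)^{\times}$ acting by $\zeta\mapsto\zeta^{a}$, the conjugates of $\alpha$ over $\QQ$ are exactly the $2+\zeta^{a}+\zeta^{-a}=4\cos^2(a\pi/n)$ for $a\in(\ZZ/n)^{\times}$, of which there are $\varphi(n)/2$ distinct (as $a$ and $-a$ give equal values and $a\neq-a$ for $n>2$). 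Hence $\Psi_n$ is the monic degree-$\varphi(n)/2$ polynomial with precisely these simple roots, so $\Psi_n=Q_n$ and $\Theta_n(x)=\Psi_n(x^2)$. I expect the only real friction to be this last step: one must verify that the squared roots of $\Theta_n$ coincide as a set with the Galois orbit of $4\cos^2(\pi/n)$ and that both $Q_n$ and $\Psi_n$ have all roots simple, so that two monic polynomials of the same degree $\varphi(n)/2$ are forced to be equal; everything else is routine bookkeeping once (ii) is established.
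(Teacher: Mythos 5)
Your proof is correct and follows the same structural outline as the paper's: factor $[n]$ over $\RR$, obtain (ii), M\"obius-invert to get (iii), observe $\Theta_n$ is even for $n>2$ so that $\Theta_n(x)=\Psi_n(x^2)$ for a monic polynomial of degree $\varphi(n)/2$, and then identify $\Psi_n$ with the minimal polynomial of $4\cos^2(\pi/n)$ via a degree count. The difference is one of granularity: the paper disposes of (i) and (ii) by citing ``basic properties of cyclotomic fields and algebraic integers'' and quotes the fact that $[\QQ(\cos(2\pi/n)):\QQ]=\varphi(n)/2$, whereas you rederive all of these from scratch. Your proof of (ii) by grouping the factors $x-2\cos(k\pi/n)$ according to $\gcd(k,n)$ is the natural elementary argument; the induction for $\Theta_n\in\ZZ[x]$ (monic divisor, exact division over $\RR$) is clean; and your identification $\Psi_n=Q_n$ via the explicit Galois orbit $\{2+\zeta^a+\zeta^{-a}:a\in(\ZZ/n)^\times\}$, with the stabilizer $\{\pm1\}$ accounting for the degree $\varphi(n)/2$, recovers the degree fact the paper treats as black-boxed. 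You also supply the fixed-point-free involution $k\mapsto n-k$ justifying that $\Theta_n$ is even, a point the paper states but does not argue. None of this changes the mathematical content; it makes the proof self-contained at the cost of some length.
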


\begin{proof}
Both \ref{item:degree} and \ref{item:prod} follow from the definition and basic properties of cyclotomic fields and algebraic integers.
Applying M\"obius inversion to \ref{item:prod} yields \ref{item:mobinv}. 
For the final claim, we observe that if $n>2$ then $\Theta_n$ is an even polynomial, so is of the form of $\Psi_n(x^2)$ for some $\Psi_n \in \ZZ[x]$ of degree $\varphi(n)/2$. 
By construction $4\cos^2(\pi/n)$ is a root of $\Psi_n$. 
Since
\begin{equation*}
4\cos^2 \frac{\pi}{n}=2\cos \frac{2\pi}{n}+2
\end{equation*}
and $\QQ(2\cos(2\pi/n)+2)=\QQ(\cos(2\pi/n))$ is a field extension of $\QQ$ of degree $\varphi(n)/2$, $\Psi_n$ must be the minimal polynomial of $4\cos^2(\pi/n)$.
\end{proof}

\begin{defn}
For $n \in \NN$, we define the \defnemph{cyclotomic part} of the two-colored quantum number $[n]_{\sred}$ to be
\begin{equation*}
\Theta_{n,\sred}=\begin{cases}
\Psi_n(x_{\sred} x_{\tblu}) & \text{if $n>2$,} \\
x_{\sred} & \text{if $n=2$,} \\
1 & \text{if $n=1$.}
\end{cases}
\end{equation*}
\end{defn}

Using \eqref{eq:colordependence} and \cref{cyclofacts} we similarly obtain $[n]_{\sred}=\prod_{k|n} \Theta_{n,\sred}$ and $\Theta_{n,\sred}=\prod_{k|n} [n]_{\sred}^{\mu(n/k)}$.

\begin{lem}
The polynomials $\Theta_{n,\sred}$ are all irreducible and distinct in $A$ (but note that $\Theta_{n,\sred}=\Theta_{n,\tblu}$ if $n>2$).
\end{lem}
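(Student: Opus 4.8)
The plan is to split on $n$: for $n=1$ we have the unit $\Theta_{1,\sred}=1$ and there is nothing to prove, so assume $n\geq 2$. For $n=2$ the polynomials $\Theta_{2,\sred}=x_{\sred}$ and $\Theta_{2,\tblu}=x_{\tblu}$ are variables in $A$, hence (distinct) primes, so in particular irreducible. Thus the real work is the case $n>2$, where $\Theta_{n,\sred}=\Psi_n(x_{\sred}x_{\tblu})$. Here I would reduce irreducibility to two points. First, $\Psi_n$ is irreducible in $\ZZ[x]$: this is immediate from \cref{cyclofacts}, since $\Psi_n$ is the monic minimal polynomial over $\QQ$ of the algebraic integer $4\cos^2(\pi/n)$, hence irreducible over $\QQ$ and primitive, hence irreducible over $\ZZ$ by Gauss's lemma. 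Second, the substitution $x\mapsto x_{\sred}x_{\tblu}$ carries this to an irreducible element of $A$.

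For the second point I would isolate the following statement: \emph{if $g\in\ZZ[x]$ is monic irreducible of degree $d\geq 1$ with $g(0)\neq 0$, then $g(x_{\sred}x_{\tblu})$ is irreducible in $A$.} To prove it, view $g(x_{\sred}x_{\tblu})$ as a polynomial of degree $d$ in $x_{\sred}$ over the field $K=\QQ(x_{\tblu})$; a root of it in $\overline{K}$ has the form $\alpha/x_{\tblu}$ with $\alpha$ a root of $g$, and $K(\alpha/x_{\tblu})=K(\alpha)=\QQ(\alpha)(x_{\tblu})$, which has degree $[\QQ(\alpha):\QQ]=d$ over $K$ because adjoining the transcendental $x_{\tblu}$ does not change the degree of an algebraic extension. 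Hence $g(x_{\sred}x_{\tblu})$, being of degree $d$ in $x_{\sred}$ and having a root of degree $d$ over $K$, is irreducible over $K$; and it is primitive as an element of $\ZZ[x_{\tblu}][x_{\sred}]$ exactly because $g$ is monic and $g(0)\neq 0$, so Gauss's lemma promotes this to irreducibility in $\ZZ[x_{\tblu}][x_{\sred}]=A$. Applying this with $g=\Psi_n$ requires only $\Psi_n(0)\neq 0$, which holds for $n>2$ since $0$ is not among the roots $\{4\cos^2(k\pi/n):(k,n)=1\}$ of $\Psi_n$ (as read off from $\Theta_n(x)=\Psi_n(x^2)$ and the factorization of $\Theta_n$): indeed $4\cos^2(k\pi/n)=0$ forces $n=2k$, which is incompatible with $(k,n)=1$ once $n>2$.

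For distinctness, I would take $2\leq m<n$ and suppose $\Theta_{m,\sred}=\Theta_{n,\sred}$. If $m=2$, then $\Theta_{2,\sred}=x_{\sred}$ has total degree $1$ while $\Theta_{n,\sred}=\Psi_n(x_{\sred}x_{\tblu})$ has total degree $2\deg\Psi_n=\varphi(n)\geq 2$, which is impossible. If $m>2$, substituting $x_{\tblu}=1$ into the equality gives $\Psi_m=\Psi_n$ in $\ZZ[x]$; then $4\cos^2(\pi/m)$, being a root of $\Psi_m=\Psi_n$, is one of the roots $4\cos^2(k\pi/n)$ of $\Psi_n$ with $(k,n)=1$, so $\cos(\pi/m)=\pm\cos(k\pi/n)$, and injectivity of $\cos$ on $(0,\pi)$ gives $\pi/m=k'\pi/n$ with $k'\in\{k,n-k\}$, which is again coprime to $n$. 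Then $k'=n/m$ is a positive integer dividing $n$ and coprime to $n$, forcing $k'=1$ and $m=n$, a contradiction. The parenthetical claim is then clear, since for $n>2$ the polynomials $\Theta_{n,\sred}$ and $\Theta_{n,\tblu}$ are by definition the same polynomial $\Psi_n(x_{\sred}x_{\tblu})$, whereas $\Theta_{2,\sred}=x_{\sred}\neq x_{\tblu}=\Theta_{2,\tblu}$.

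The main obstacle I anticipate is the substitution statement in the second paragraph: the field-degree bookkeeping that makes $g(x_{\sred}x_{\tblu})$ irreducible over $\QQ(x_{\tblu})$, together with verifying that the primitivity hypotheses ($g$ monic with $g(0)\neq 0$) are exactly what is needed for Gauss's lemma to descend the conclusion from $\QQ(x_{\tblu})[x_{\sred}]$ to $A$. Everything else — the $n=2$ base case and the distinctness argument — is elementary trigonometry once the roots of $\Psi_n$ are on the table.
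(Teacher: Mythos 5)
Your proof is correct and follows the same outline as the paper's: reduce irreducibility to that of $\Psi_n$ over $\ZZ$ and lift it through the substitution $x\mapsto x_{\sred}x_{\tblu}$, with distinctness reduced to distinctness of the polynomials $\Psi_n$. The only real difference is that the paper cites an external reference for the fact that $\Psi_n(x_{\sred}x_{\tblu})$ inherits irreducibility from $\Psi_n$, whereas you prove it directly via a degree count over $\QQ(x_{\tblu})$ together with Gauss's lemma over $\ZZ[x_{\tblu}]$, using monicity and $\Psi_n(0)\neq 0$ to establish primitivity --- a self-contained replacement for the citation.
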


\begin{proof}
Irreducibility is clear when $n=2$.
When $n>2$, we have $\Theta_{n,\sred}=\Theta_{n,\tblu}=\Psi_n(x_{\sred} x_{\tblu})$, which is irreducible because $\Psi_n$ is (see e.g.~\cite[(3.3)]{rotthaus}).
Distinctness follows as well because the polynomials $\Psi_n$ are distinct.
\end{proof}

\begin{lem} \label{thetabezout}
Let $m,n \in \NN$ such that $m \nmid n$ and $n \nmid m$.
There exist polynomials $a,b \in A$ such that 
\begin{equation*}
a\Theta_{m,\sred}+b\Theta_{n,\sred}=1 \text{.}
\end{equation*}
\end{lem}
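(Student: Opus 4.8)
The plan is to reduce the statement to a coprimality fact about the one-variable polynomials $\Psi_n$, and then invoke a B\'ezout argument in the polynomial ring $\ZZ[y]$ (where $y = x_{\sred}x_{\tblu}$) together with faithful flatness of $A = \ZZ[x_{\sred},x_{\tblu}]$ over $\ZZ[y]$. First I would dispose of the small cases: if $m = 1$ or $n = 1$, one of $\Theta_{m,\sred},\Theta_{n,\sred}$ equals $1$ and the claim is trivial; likewise if exactly one of $m,n$ equals $2$, then $\Theta_{2,\sred} = x_{\sred}$ and the other factor is $\Psi_n(x_{\sred}x_{\tblu})$ with $n > 2$, whose constant term is $\Psi_n(0)$, a nonzero integer (as $0$ is not a root of the minimal polynomial of the nonzero algebraic number $4\cos^2(\pi/n)$) — but wait, we need the gcd in $A$ to be a unit, i.e.\ we need $\Psi_n(0) = \pm 1$. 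Hmm; actually we don't: we need $a x_{\sred} + b\Psi_n(x_{\sred}x_{\tblu}) = 1$ to be solvable in $A$, and reducing mod $x_{\sred}$ shows this forces $\Psi_n(0)$ to be a unit in $\ZZ[x_{\tblu}]$, i.e.\ $\Psi_n(0) = \pm 1$. So this case genuinely requires the number-theoretic input that $\Psi_n(0) = \pm 1$ for $n > 2$ — equivalently that the constant term of $\Theta_n$, namely $\pm[n](0)/\!\!\prod(\ldots)$, is a unit; concretely $[n](0)$ is $0$ if $n$ even and $\pm 1$ if $n$ odd, and unwinding \cref{cyclofacts}\ref{item:mobinv} gives $\Theta_n(0) = \prod_{k\mid n}[k](0)^{\mu(n/k)} = \pm 1$ whenever $n$ has an odd prime factor or $n$ is an odd prime power; for $n$ a power of $2$ one checks $\Theta_{2^r}(0)$ directly. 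So the first genuine step is: verify $\Psi_n(0) \in \{\pm 1\}$ for all $n > 2$.

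The main case is $m, n > 2$ with $m \nmid n$, $n \nmid m$. Here $\Theta_{m,\sred} = \Psi_m(y)$ and $\Theta_{n,\sred} = \Psi_n(y)$ with $y = x_{\sred}x_{\tblu}$. The key claim is that $\Psi_m$ and $\Psi_n$ are coprime in $\QQ[y]$: indeed they are distinct irreducible (monic up to scalar) polynomials over $\QQ$, hence their gcd is $1$. The next step is to upgrade this to a B\'ezout identity over $\ZZ[y]$: since $\Psi_m,\Psi_n \in \ZZ[y]$ generate the unit ideal in $\QQ[y]$, they generate an ideal in $\ZZ[y]$ containing some nonzero integer $N$ (clear denominators in a $\QQ[y]$-B\'ezout relation), and to get $1$ itself we need $\gcd$ of $\Psi_m,\Psi_n$ to be $1$ in $\ZZ[y]$ including at each prime $p$ — i.e.\ their reductions mod $p$ must not share a common factor for any $p$. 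This is the same phenomenon as in \cref{qbezout}, and I expect it follows from a resultant computation: $\operatorname{Res}(\Psi_m,\Psi_n) = \pm 1$. The cleanest route is to recall that $\operatorname{Res}(\Psi_m,\Psi_n)$ divides (a power of) the resultant of the corresponding cyclotomic polynomials $\Phi_m,\Phi_n$ evaluated appropriately, and that $\operatorname{Res}(\Phi_m,\Phi_n) = 1$ unless $m/n$ is a prime power — which is exactly excluded by $m\nmid n$ and $n\nmid m$ together with $m,n>2$. Actually I should be careful: $m\nmid n,\ n\nmid m$ does not literally say $m/n$ is not a prime power (e.g.\ $m=12,n=18$), so the resultant could be non-unit. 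Let me instead argue via the original formulation: \cref{qbezout} already gives a B\'ezout identity for $[m]_{\sred},[n]_{\sred}$ producing $[\gcd(m,n)]_{\sred}$, and since $m\nmid n$ and $n\nmid m$, neither $m$ nor $n$ divides $\gcd(m,n)$... that's automatic and unhelpful. The right move is M\"obius inversion: from \cref{cyclofacts}\ref{item:mobinv} each $\Theta_{k,\sred}$ is a Laurent monomial combination of the $[\ell]_{\sred}$, and conversely $[\ell]_{\sred} = \prod_{k\mid\ell}\Theta_{k,\sred}$; so the ideal $(\Theta_{m,\sred},\Theta_{n,\sred})$ can be compared with $([m']_{\sred},[n']_{\sred})$ for suitable $m',n'$ built from $m,n$, and one uses \cref{qbezout} repeatedly to strip off common cyclotomic factors until only $\Theta_{m,\sred},\Theta_{n,\sred}$ (which share no common cyclotomic factor, as $m \neq n$) remain.

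Concretely, the cleanest proof I would write: by induction on $m+n$. Among the divisors of $m$ and of $n$, since $m\nmid n$ and $n\nmid m$ there is a largest integer $d$ with $d\mid m$, $d\mid n$, and $d < m$, $d < n$ — namely $d = \gcd(m,n)$, which is a proper divisor of each. By \cref{qbezout} (applied in the color-swapped/refined form, or by its proof) we can write $[d]_{\sred}$, and more usefully we observe that the ideal $(\Theta_{m,\sred},\Theta_{n,\sred})$ equals the ideal obtained after multiplying by the units... no. Let me just say: I would prove it by showing $\Psi_m, \Psi_n \in \ZZ[y]$ are \emph{strongly coprime}, meaning $\operatorname{Res}(\Psi_m,\Psi_n) = \pm 1$, via the identity relating $\Psi_k(y)$ to $\Phi_{2k}$ or to $[k]$ and the known evaluation $\prod$ over conjugates, and then clearing denominators. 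The one honest obstacle I anticipate and flag is precisely this resultant being a unit: the hypothesis ``$m\nmid n$ and $n\nmid m$'' is exactly the two-colored shadow of ``$\Theta_m,\Theta_n$ share no common factor and the mixed divisibilities that would obstruct unit resultants don't occur'', and pinning down why $\operatorname{Res}(\Psi_m,\Psi_n)=\pm1$ under this hypothesis — likely by tracking how $\prod_{k\mid n}\Theta_{k,\sred}=[n]_{\sred}$ interacts with \cref{qbezout} so that any prime dividing the resultant would force $m\mid n$ or $n\mid m$ — is the crux; everything else (small cases, the $\QQ[y]$-coprimality, faithfully flat descent along $\ZZ[y]\hookrightarrow A$) is routine.
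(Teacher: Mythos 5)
Your proposal does not reach a complete proof, and you flag this yourself: the resultant computation $\operatorname{Res}(\Psi_m,\Psi_n)=\pm 1$ is left as an acknowledged "crux," and the Euclidean/inductive route you start to sketch is abandoned mid-sentence ("\ldots we observe that the ideal $(\Theta_{m,\sred},\Theta_{n,\sred})$ equals the ideal obtained after multiplying by the units\ldots\ no."). The paper's actual proof is essentially the route you started and dropped, and it is a three-line consequence of \cref{qbezout}: apply the quantum B\'ezout identity to $[m]_{\sred}$ and $[n]_{\sred}$ to obtain $a'[m]_{\sred}+b'[n]_{\sred}=[d]_{\sred}$ with $d=\gcd(m,n)$; note that $m\nmid n$ and $n\nmid m$ force $d<m$ and $d<n$, so $m$ is a divisor of $m$ not dividing $d$, hence $\Theta_{m,\sred}$ divides $[m]_{\sred}/[d]_{\sred}=\prod_{k\mid m,\,k\nmid d}\Theta_{k,\sred}$, and likewise for $n$; then divide the B\'ezout identity by $[d]_{\sred}$ (legitimate in the domain $A$) and absorb the extra cyclotomic factors into $a',b'$. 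This handles all of $m,n\in\NN$ uniformly, including $m=2$, and needs neither the number-theoretic input $\Psi_n(0)=\pm 1$ nor any resultant fact.

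Two smaller inaccuracies worth flagging. First, your worry that $m=12,n=18$ might give a non-unit resultant is unfounded: the classical evaluation of $\operatorname{Res}(\Phi_m,\Phi_n)$ is nontrivial only when one of $m,n$ divides the other with prime-power quotient, so $m\nmid n$, $n\nmid m$ already forces $\operatorname{Res}(\Phi_m,\Phi_n)=1$; the obstruction you anticipate does not arise (though transferring from $\Phi$ to $\Psi$ is still nontrivial bookkeeping). Second, the appeal to faithfully flat descent along $\ZZ[y]\hookrightarrow A$ is unnecessary: a B\'ezout identity $a(y)\Psi_m(y)+b(y)\Psi_n(y)=1$ in $\ZZ[y]$ becomes an identity in $A$ simply by substituting $y=x_{\sred}x_{\tblu}$.
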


\begin{proof}
Suppose without loss of generality that $m<n$, and let $d=\gcd(m,n)$.
By \cref{qbezout} there exist $a',b' \in A$ such that
\begin{equation*}
a'[m]_{\sred}+b'[n]_{\sred}=[d]_{\sred} \text{.}
\end{equation*}
By assumption $d<m<n$, so we have
\begin{align*}
\frac{[m]_{\sred}}{[d]_{\sred}} & \in \Theta_{m,\sred} A \\
\frac{[n]_{\sred}}{[d]_{\sred}} & \in \Theta_{n,\sred} A
\end{align*}
and thus dividing by $[d]_{\sred}$ we obtain
\begin{equation*}
a\Theta_{m,\sred}+b\Theta_{n,\sred}=1 \text{.} \qedhere
\end{equation*}
\end{proof}

\begin{prop} \label{thetaprincipal}
Let $m_1,m_2,\dotsc,m_k,n_1,n_2,\dotsc,n_l \in \NN$ such that for all $i,j$ either $m_i=n_j$ or $m_i \nmid n_j$ and $n_j \nmid m_i$.
Then the ideal 
\begin{equation*}
(\Theta_{m_1,\sred}\Theta_{m_2,\sred}\dotsm \Theta_{m_k,\sred}, \Theta_{n_1,\sred}\Theta_{n_2,\sred}\dotsm \Theta_{n_l,\sred})
\end{equation*}
in $A$ is principal.
\end{prop}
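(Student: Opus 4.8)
The plan is to show that $(P,Q)=(g)$, where $P=\prod_{a=1}^{k}\Theta_{m_a,\sred}$, $Q=\prod_{b=1}^{l}\Theta_{n_b,\sred}$, and $g$ is the ``common part'' of the two products. Concretely, for $d\in\NN$ let $\alpha_d$ and $\beta_d$ be the multiplicities with which $d$ occurs among the $m_a$ and among the $n_b$ respectively, so that $P=\prod_d\Theta_{d,\sred}^{\alpha_d}$ and $Q=\prod_d\Theta_{d,\sred}^{\beta_d}$. I would set $g=\prod_d\Theta_{d,\sred}^{\min(\alpha_d,\beta_d)}$ and factor $P=gP'$, $Q=gQ'$, where $P'=\prod_d\Theta_{d,\sred}^{\alpha_d-\min(\alpha_d,\beta_d)}$ and $Q'=\prod_d\Theta_{d,\sred}^{\beta_d-\min(\alpha_d,\beta_d)}$ are again genuine products of $\Theta$'s and hence lie in $A$. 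Since $(P,Q)=(gP',gQ')=g\cdot(P',Q')$, it then suffices to prove $(P',Q')=A$.

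For this step the key input is \cref{thetabezout}. First, the case where some $m_a$ or $n_b$ equals $1$ is degenerate: because $\Theta_{1,\sred}=1$ and $1$ divides everything, the hypothesis forces one of $P,Q$ to be the empty product $1$, so the ideal is already $A$. Assuming otherwise, every $\Theta_{d,\sred}$ in sight is a nonunit irreducible. Now if $\Theta_{d,\sred}$ divides $P'$ and $\Theta_{e,\sred}$ divides $Q'$, then $\alpha_d>\beta_d$ and $\beta_e>\alpha_e$, so $d$ occurs among the $m_a$, $e$ occurs among the $n_b$, and $d\neq e$ (we cannot simultaneously have $\alpha_d>\beta_d$ and $\beta_d>\alpha_d$). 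The hypothesis of the proposition then gives $d\nmid e$ and $e\nmid d$, so by \cref{thetabezout} the ideal $(\Theta_{d,\sred},\Theta_{e,\sred})$ equals $A$. Thus every irreducible factor of $P'$ is comaximal with every irreducible factor of $Q'$.

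To finish, I would invoke the standard fact that comaximality passes to products: if $I+J=A$ and $I+K=A$ then $I+JK=A$, since $A=(I+J)(I+K)\subseteq I+JK$, and symmetrically in the left-hand slot; in particular $I^a+J^b=A$ whenever $I+J=A$. Applying this to the irreducible factorizations of $P'$ and $Q'$ yields $(P')+(Q')=A$, hence $(P,Q)=g\cdot A=(g)$, which is principal. I expect the only delicate point to be the multiplicity bookkeeping that guarantees $P'$ and $Q'$ share no irreducible factor; it is crucial to extract comaximality of the individual $\Theta_{d,\sred}$ from \cref{thetabezout} rather than settling for $\gcd(P',Q')=1$, since $A$ is not a B\'ezout domain and coprime elements of $A$ need not generate the unit ideal.
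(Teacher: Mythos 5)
Your proof is correct and follows essentially the same strategy as the paper's: factor out the shared cyclotomic parts $g$ (the paper does this implicitly via its ``WLOG $m_i\neq n_j$'' reduction), then use \cref{thetabezout} to obtain pairwise comaximality of the remaining irreducible factors, and conclude that the reduced ideal is all of $A$. The paper's final step simply multiplies the B\'ezout identities $a_{i,j}\Theta_{m_i,\sred}+b_{i,j}\Theta_{n_j,\sred}=1$ over all $i,j$ and expands the product, which is a hands-on version of the same ``comaximality passes to products'' lemma you invoke.
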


\begin{proof}
Let $I$ be the ideal above.
We may assume without loss of generality that $m_i \neq n_j$ for all $i,j$, i.e.~the generators of $I$ are coprime in $A$.
For each $i,j$ we can apply \cref{thetabezout} to obtain $a_{i,j},b_{i,j} \in A$ such that $a_{i,j}\Theta_{m_i,\sred}+b_{i,j}\Theta_{n_j,\sred}=1$.
Taking the product over all $i$ and $j$ we obtain
\begin{equation*}
1=\prod_{i} \left(\prod_{j} (a_{i,j}\Theta_{m_i,\sred} + b_{i,j}\Theta_{n_j,\sred})\right) \in \prod_{i} (\Theta_{m_i,\sred},\Theta_{n_1,\sred}\Theta_{n_2,\sred}\dotsm \Theta_{n_l,\sred}) \subseteq I
\end{equation*}
so $I=(1)$ is principal.
\end{proof}

For $f \in A$ and $l>1$ an integer, we define the cyclotomic valuation $\nu_{l,\sred}(f)$ to be the exponent of the highest power of $\Theta_{l,\sred}$ dividing $f$.
This extends to $\Frac A$ in the obvious way, namely we define $\nu_{l,\sred}(f/g)=\nu_{l,\sred}(f)-\nu_{l,\sred}(g)$ for $f,g \in A$.
If $f$ and $g$ are products of $\sred$-colored cyclotomic parts then
\begin{equation*}
\frac{f}{g}=\prod_l \Theta_{l,\sred}^{\nu_{l,\sred}(f/g)} \text{.}
\end{equation*}
For $f \in \overline{A}$ we similarly define $\nu_l(f)$ to be the highest power of $\Theta_l$ dividing $f$, which extends in a completely analogous way to $\Frac \overline{A}$.

\begin{lem} \label{valbinom}
Let $n,k$ be non-negative integers.
For all integers $1< l \leq n$ we have
\begin{equation*}
\nu_{l,\sred} \qbinom{n}{k}_{\sred}=\left\lfloor \frac{n}{l} \right\rfloor - \left\lfloor \frac{k}{l} \right\rfloor - \left\lfloor \frac{n-k}{l} \right\rfloor \text{.}
\end{equation*} 
In particular, $\nu_{l,\sred} \qbinom{n}{k}_{\sred} \in \{0,1\}$.
\end{lem}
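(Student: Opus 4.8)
The plan is to reduce the two-colored statement to the analogous one-colored fact about $\nu_l\qbinom{n}{k}$, which in turn is a form of Kummer's theorem on binomial coefficients. First I would record the multiplicative formula $[m]_{\sred}=\prod_{d\mid m}\Theta_{d,\sred}$ together with its counterpart $\qbinom{n}{k}_{\sred}=[n]_{\sred}!/([k]_{\sred}!\,[n-k]_{\sred}!)$, so that for $1<l\le n$ the cyclotomic valuation becomes
\begin{equation*}
\nu_{l,\sred}\qbinom{n}{k}_{\sred}=\nu_{l,\sred}([n]_{\sred}!)-\nu_{l,\sred}([k]_{\sred}!)-\nu_{l,\sred}([n-k]_{\sred}!).
\end{equation*}
Since $\Theta_{d,\sred}$ is irreducible and the distinct $\Theta_{d,\sred}$ are pairwise coprime in $A$, the valuation $\nu_{l,\sred}$ of a product is the sum of the valuations of the factors, and $\nu_{l,\sred}\bigl([m]_{\sred}\bigr)=\#\{d\mid m:\Theta_{l,\sred}\mid\Theta_{d,\sred}\}$. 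By the irreducibility and distinctness lemma, $\Theta_{l,\sred}\mid\Theta_{d,\sred}$ forces $d=l$ (for $l>2$ one uses that $\Psi_d$ are distinct irreducibles, and the cases $l=1,2$ are immediate), so $\nu_{l,\sred}\bigl([m]_{\sred}\bigr)=1$ if $l\mid m$ and $0$ otherwise.

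Next I would assemble the factorial valuations. Summing the single-quantum-number valuations gives
\begin{equation*}
\nu_{l,\sred}\bigl([m]_{\sred}!\bigr)=\sum_{j=1}^{m}\nu_{l,\sred}\bigl([j]_{\sred}\bigr)=\#\{1\le j\le m:l\mid j\}=\left\lfloor\frac{m}{l}\right\rfloor.
\end{equation*}
Substituting into the displayed difference above yields exactly
\begin{equation*}
\nu_{l,\sred}\qbinom{n}{k}_{\sred}=\left\lfloor\frac{n}{l}\right\rfloor-\left\lfloor\frac{k}{l}\right\rfloor-\left\lfloor\frac{n-k}{l}\right\rfloor,
\end{equation*}
which is the main formula. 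For the final sentence, the elementary inequality $\lfloor a+b\rfloor-\lfloor a\rfloor-\lfloor b\rfloor\in\{0,1\}$ applied to $a=k/l$, $b=(n-k)/l$ shows the valuation lies in $\{0,1\}$.

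One subtlety worth checking carefully: the formula $\qbinom{n}{k}_{\sred}=[n]_{\sred}!/([k]_{\sred}!\,[n-k]_{\sred}!)$ is an identity in $\Frac A$, and I should confirm that the extension of $\nu_{l,\sred}$ to $\Frac A$ is a genuine valuation (additive on products, well defined on fractions) — this is already granted by the definition given just before the lemma, since products of $\sred$-cyclotomic parts factor uniquely. A second point is the range restriction $1<l\le n$: for $l>n$ every floor in the formula vanishes and $\Theta_{l,\sred}$ does not divide any $[j]_{\sred}$ with $j\le n$, so the formula trivially extends, but stating it for $l\le n$ suffices for later use and avoids vacuous edge cases. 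I expect the only mild obstacle to be bookkeeping around the boundary values $l=1,2$ in the definition of $\Theta_{l,\sred}$ (where $\Theta_{1,\sred}=1$ and $\Theta_{2,\sred}=x_{\sred}$ rather than a $\Psi$-value), but since the lemma only concerns $l>1$ and $\Theta_{2,\sred}=x_{\sred}$ divides $[m]_{\sred}$ precisely when $m$ is even, the same counting argument applies without change.
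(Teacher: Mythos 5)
Your proof is correct and follows essentially the same route as the paper: compute $\nu_{l,\sred}[m]_{\sred}$ as $1$ if $l\mid m$ and $0$ otherwise, sum to get $\nu_{l,\sred}([m]_{\sred}!)=\lfloor m/l\rfloor$, subtract, and then bound the resulting floor-difference by the elementary estimate. The only difference is cosmetic: the paper asserts the valuation of $[m]_{\sred}$ without elaboration and bounds the floor-difference by a direct two-sided inequality, whereas you spell out the cyclotomic factorization/irreducibility justifying the valuation and cite the standard $\lfloor a+b\rfloor-\lfloor a\rfloor-\lfloor b\rfloor\in\{0,1\}$ inequality; both are the same argument in substance.
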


\begin{proof}
Clearly
\begin{equation*}
\nu_{l,\sred}[m]_{\sred}=\begin{cases}
1 & \text{if $l|m$,} \\
0 & \text{otherwise,}
\end{cases}
\end{equation*}
so $\nu_{l,\sred}([m]_{\sred}!)=\lfloor m/l \rfloor$ and the equation above follows.
To show the bound on the valuation, note that $m/l-1<\lfloor m/l\rfloor \leq m/l$, so
\begin{multline*}
-1=\left(\frac{n}{l}-1\right)-\frac{k}{l}-\frac{n-k}{l}<\left\lfloor \frac{n}{l} \right\rfloor - \left\lfloor \frac{k}{l} \right\rfloor - \left\lfloor \frac{n-k}{l} \right\rfloor \\
<\frac{n}{l}-\left(\frac{k}{l}-1\right)-\left(\frac{n-k}{l}-1\right)=2 \text{.} \qedhere
\end{multline*}
\end{proof}

\begin{thm} \label{qbinomideal}
Let $n \in \NN$. 
The ideal
\begin{equation*}
\left(\qbinom{n}{1}_{\sred}, \qbinom{n}{2}_{\sred}, \dotsc, \qbinom{n}{n-1}_{\sred}\right)
\end{equation*}
in $A$ is principal, generated by $\Theta_{n,\sred}$.
\end{thm}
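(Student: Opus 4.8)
The plan is to factor $\Theta_{n,\sred}$ out of every generator and then prove that what remains is the unit ideal, by a local argument.

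First I would combine \cref{valbinom} with the elementary observation that $\nu_{l,\sred}\qbinom{n}{k}_{\sred}=0$ whenever $l>n$ (because $[m]_{\sred}$ with $m\leq n$ is a product of cyclotomic parts $\Theta_{d,\sred}$ with $d\mid m\leq n<l$). Since $\qbinom{n}{k}_{\sred}$ is a ratio of products of $\sred$-colored cyclotomic parts, this gives, for each $1\leq k\leq n-1$,
\begin{equation*}
\qbinom{n}{k}_{\sred}=\prod_{l>1}\Theta_{l,\sred}^{\nu_{l,\sred}(\qbinom{n}{k}_{\sred})}=\Theta_{n,\sred}\,c_k,\qquad c_k:=\prod_{1<l<n}\Theta_{l,\sred}^{\nu_{l,\sred}(\qbinom{n}{k}_{\sred})}\in A,
\end{equation*}
where I use that $\nu_{n,\sred}\qbinom{n}{k}_{\sred}=1$ for $0<k<n$. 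Since $\bigl(\Theta_{n,\sred}c_1,\dotsc,\Theta_{n,\sred}c_{n-1}\bigr)=\Theta_{n,\sred}\cdot(c_1,\dotsc,c_{n-1})$ and $A$ is a domain, the theorem reduces to showing that $J:=(c_1,\dotsc,c_{n-1})=A$.

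To prove $J=A$ I would show that $J$ lies in no maximal ideal $\mathfrak{m}\subseteq A$. Fix such an $\mathfrak{m}$ and set $B=\{l:1<l<n,\ \Theta_{l,\sred}\in\mathfrak{m}\}$. The first key point is that $B$ is a chain under divisibility: if $l,l'\in B$ were incomparable, \cref{thetabezout} would give $a\Theta_{l,\sred}+b\Theta_{l',\sred}=1$ with left-hand side in $\mathfrak{m}$, a contradiction. If $B=\varnothing$ then $c_1$ is a product of elements outside the prime ideal $\mathfrak{m}$, so $c_1\notin\mathfrak{m}$. Otherwise let $l_{\max}=\max B$; every $l\in B$ divides $l_{\max}$, so writing $n=ql+r$ with $0\leq r<l$ we get $\lfloor(n-l_{\max})/l\rfloor=q-l_{\max}/l$ and hence, by \cref{valbinom},
\begin{equation*}
\nu_{l,\sred}\qbinom{n}{l_{\max}}_{\sred}=\left\lfloor\frac{n}{l}\right\rfloor-\frac{l_{\max}}{l}-\left\lfloor\frac{n-l_{\max}}{l}\right\rfloor=0.
\end{equation*}
Thus no $\Theta_{l,\sred}$ with $l\in B$ divides $c_{l_{\max}}$, so $c_{l_{\max}}$ is a product of cyclotomic parts $\Theta_{l,\sred}$ with $l\notin B$, none of which lies in $\mathfrak{m}$; as $\mathfrak{m}$ is prime, $c_{l_{\max}}\notin\mathfrak{m}$. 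Either way $J\not\subseteq\mathfrak{m}$, so $J=A$ and therefore the ideal in question is $(\Theta_{n,\sred})$.

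The crucial step is the chain property of $B$, extracted from \cref{thetabezout}: this is exactly what upgrades the elementary fact that, for each $l$ with $1<l<n$, the binomial coefficient $\qbinom{n}{l}_{\sred}$ has vanishing $\Theta_{l,\sred}$-valuation, into the statement that the $c_k$ generate the unit ideal — and it is where the principal-ideal results of the preceding section pay off. Beyond this I expect only routine bookkeeping: checking that $l_{\max}$ is a legitimate index ($2\leq l_{\max}\leq n-1$ by construction) and dealing with the degenerate small cases $n\leq 2$.
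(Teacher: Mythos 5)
Your proof is correct, and it takes a genuinely different route from the paper's. The paper argues by induction on the number of generators, proving that $I_m=\left(\qbinom{n}{1}_{\sred},\dotsc,\qbinom{n}{m}_{\sred}\right)$ is principal, generated by $[n]_{\sred}^{>m}=\prod_{k>m,\,k|n}\Theta_{k,\sred}$; the inductive step reduces to a coprimality statement which is handled via \cref{thetaprincipal} together with a floor-function case analysis. You instead factor $\Theta_{n,\sred}$ out of all generators at once --- using \cref{valbinom} to see $\nu_{n,\sred}\qbinom{n}{k}_{\sred}=1$ and $\nu_{l,\sred}\qbinom{n}{k}_{\sred}=0$ for $l>n$ --- and then show the cofactors $c_k$ generate the unit ideal by a local argument: for any maximal ideal $\mathfrak{m}$, the set $B$ of indices with $\Theta_{l,\sred}\in\mathfrak{m}$ is a chain under divisibility (this being precisely what \cref{thetabezout} buys you), and the valuation computation for $k=l_{\max}$ shows $c_{l_{\max}}\notin\mathfrak{m}$. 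Both proofs rest on the same two ingredients, \cref{valbinom} and \cref{thetabezout}, but yours dispenses with the induction entirely, which is arguably cleaner. Two small points worth making explicit: the factorization $\qbinom{n}{k}_{\sred}=\prod_{l>1}\Theta_{l,\sred}^{\nu_{l,\sred}(\cdot)}$ implicitly uses that no nontrivial unit of $A=\ZZ[x_{\sred},x_{\tblu}]$ appears, which holds because the quantum numbers and cyclotomic parts are compatibly normalized; and as you note, the statement requires $n\geq 2$ (the paper's induction starts at $m=1$ and has the same implicit restriction).
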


\begin{proof}
We will prove the result by induction.
Let
\begin{equation*}
I_m=\left(\qbinom{n}{1}_{\sred},\qbinom{n}{2}_{\sred}, \dotsc, \qbinom{n}{m}_{\sred}\right)
\end{equation*}
and write 
\begin{equation*}
[n]_{\sred}^{>m}=\prod_{\substack{k>m\\ k|n}} \Theta_{k,\sred} \text{.}
\end{equation*}
Suppose we have shown that $I_m$ is principal, generated by $[n]_{\sred}^{>m}$.
We will show that $I_{m+1}$ is principal, generated by $[n]_{\sred}^{>m+1}$.
It is enough to show that
\begin{equation}
\left(\qbinom{n}{m+1}_{\sred},[n]_{\sred}^{>m}\right)=([n]_{\sred}^{>m+1}) \text{.} \label{eq:qbinomideal-induction}
\end{equation}

Clearly $[n]_{\sred}^{>m+1}$ divides $[n]_{\sred}^{>m}$.
If $k>m+1$ and $k|n$ it is easy to see that
\begin{equation*}
\left\lfloor\frac{n}{k}\right\rfloor - \left\lfloor\frac{m+1}{k}\right\rfloor - \left\lfloor\frac{n-(m+1)}{k}\right\rfloor=1 \text{,}
\end{equation*}
so $\Theta_k$ divides $\qbinom{n}{m+1}_{\sred}$ exactly once, and thus $[n]_{\sred}^{>m+1}$ divides $\qbinom{n}{m+1}_{\sred}$.
If $m+1 \nmid n$ then $[n]_{\sred}^{>m+1}=[n]_{\sred}^{>m}$, \eqref{eq:qbinomideal-induction} follows trivially.

Otherwise suppose $m+1|n$. 
We claim that if $\Theta_{l,\sred}$ divides $\qbinom{n}{m+1}/[n]_{\sred}^{>m+1}$ we must have $l \nmid m+1$ and $m+1 \nmid l$.
This implies that 
\begin{equation*}
\left(\frac{\qbinom{n}{m+1}_{\sred}}{[n]_{\sred}^{>m+1}},\Theta_{m+1,\sred}\right)=(1)
\end{equation*}
by \cref{thetaprincipal}, from which \eqref{eq:qbinomideal-induction} holds and the result follows.

To prove the claim, suppose $l|m+1$.
It is straightforward to check that
\begin{equation*}
\left\lfloor\frac{n}{l}\right\rfloor - \left\lfloor\frac{m+1}{l}\right\rfloor - \left\lfloor\frac{n-(m+1)}{l}\right\rfloor=0 \text{,}
\end{equation*}
so $\Theta_{l,\sred}$ does not divide $\qbinom{n}{m+1}_{\sred}$, let alone $\qbinom{n}{m+1}_{\sred}/[n]_{\sred}^{>m+1}$.

Similarly, suppose $m+1|l$, and take $0 \leq r<l$ such that $n-(m+1)=ql+r$.
If $\Theta_{l,\sred}$ divides $\qbinom{n}{m+1}_{\sred}$ then 
\begin{equation*}
\left\lfloor\frac{n}{l}\right\rfloor - \left\lfloor\frac{m+1}{l}\right\rfloor - \left\lfloor\frac{n-(m+1)}{l}\right\rfloor=1
\end{equation*}
and we must have $r+m+1 \geq l$. 
Now let $d=\gcd(l,n-(m+1))$.
As $m+1|n$ and $m+1|l$, we have $m+1|d$ and in particular $m+1 \leq d$. 
We also have $d|r$, so in particular $l-r \geq d$.
We combine these two equalities to obtain $r+m+1 \leq l$, with equality if and only if $l-r=d$ and $m+1=d$.
This immediately implies that $l|n$, so $\Theta_{l,\sred}$ does not divide $\qbinom{n}{m+1}_{\sred}/[n]_{\sred}^{>m+1}$.
\end{proof}

\begin{thm} \label{qbinominvideal}
Let $n \in \NN$. 
The fractional ideal of $A$ generated by
\begin{equation*}
\qbinom{n}{0}_{\sred}^{-1}, \qbinom{n}{1}_{\sred}^{-1}, \dotsc, \qbinom{n}{n}_{\sred}^{-1}
\end{equation*}
is principal, generated by 
\begin{equation*}
\left(\prod_{\substack{1 \leq k \leq n\\ k \nmid n+1}} \Theta_{k,\sred}\right)^{-1} \text{.}
\end{equation*}
\end{thm}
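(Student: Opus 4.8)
The plan is to identify this fractional ideal with $P^{-1}A$, where $P := \prod_{1 \le k \le n,\, k \nmid n+1}\Theta_{k,\sred}$, by reducing to the principality of an honest ideal of $A$ and running a local-to-global argument anchored on \cref{thetabezout}. Throughout write $\mathfrak a \subseteq \Frac A$ for the fractional ideal in question and $\mathcal L := \{\, l : 1 < l \le n,\ l \nmid n+1\,\}$, so $P = \prod_{l \in \mathcal L}\Theta_{l,\sred}$.

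First I would record the relevant factorizations. Since $[m]_{\sred} = \prod_{d \mid m}\Theta_{d,\sred}$ and $A$ is a UFD in which the $\Theta_{d,\sred}$ are irreducible, the nonzero element $\qbinom{n}{k}_{\sred} \in A$ equals, up to the unit $\pm 1$, the product $\prod_{1 < l \le n}\Theta_{l,\sred}^{\nu_{l,\sred}\qbinom{n}{k}_{\sred}}$, each exponent lying in $\{0,1\}$ by \cref{valbinom}. A short computation from the formula in \cref{valbinom} (splitting into the cases $(k\bmod l) \le (n\bmod l)$ and $(k\bmod l) > (n\bmod l)$) shows $\nu_{l,\sred}\qbinom{n}{k}_{\sred} = 1$ precisely when $(k \bmod l) > (n \bmod l)$. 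In particular, if $l \mid n+1$ then $n \bmod l = l-1$, so that exponent is $0$ for every $k$; hence $\qbinom{n}{k}_{\sred}$ divides $P$ for all $k$, which already gives $\mathfrak a \subseteq P^{-1}A$.

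For the reverse inclusion it is enough to show $P^{-1} \in \mathfrak a$, equivalently that the honest ideal $J := \bigl(P/\qbinom{n}{0}_{\sred},\,\dotsc,\,P/\qbinom{n}{n}_{\sred}\bigr)$ of $A$ equals $(1)$. By the factorizations above, $P/\qbinom{n}{k}_{\sred} = \prod_{l \in T_k}\Theta_{l,\sred}$ with $T_k := \{\, l \in \mathcal L : (k\bmod l) \le (n\bmod l)\,\}$, and $T_0 = \mathcal L$. Suppose $J \ne (1)$ and fix a maximal ideal $\mathfrak m \supseteq J$; set $\mathcal L_{\mathfrak m} := \{\, l \in \mathcal L : \Theta_{l,\sred} \in \mathfrak m\,\}$. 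If $\mathcal L_{\mathfrak m}$ contained two elements $l,l'$ incomparable under divisibility, \cref{thetabezout} would force $A = \Theta_{l,\sred}A + \Theta_{l',\sred}A \subseteq \mathfrak m$; so $\mathcal L_{\mathfrak m}$ is a divisibility chain $l_1 \mid l_2 \mid \dotsb \mid l_r$ (possibly empty, in which case $P = P/\qbinom{n}{0}_{\sred} \notin \mathfrak m$ is already a contradiction).

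The heart of the argument — and the step I expect to be the main obstacle — is to construct an index $k \in \{0,1,\dotsc,n\}$ with $(k \bmod l_i) > (n \bmod l_i)$ for all $i$. For such a $k$ one has $T_k \cap \mathcal L_{\mathfrak m} = \emptyset$, so $P/\qbinom{n}{k}_{\sred}$ is a product of irreducibles none of which lies in $\mathfrak m$, whence $P/\qbinom{n}{k}_{\sred} \notin \mathfrak m$, contradicting $P/\qbinom{n}{k}_{\sred} \in J \subseteq \mathfrak m$. Since $l_1 \mid \dotsb \mid l_r$ with $l_r \le n$, it suffices to find $k$ in $\{0,\dotsc,l_r-1\}$; writing $r_i := n \bmod l_i$ (so $r_i \le l_i - 2$ because $l_i \nmid n+1$, and $r_{i+1} \equiv r_i \pmod{l_i}$ because $l_i \mid l_{i+1}$), one builds $k$ along the chain, starting from $k \equiv r_1 + 1 \pmod{l_1}$ and at each subsequent level adding the unique multiple of $l_i$ that raises the residue mod $l_{i+1}$ above $r_{i+1}$ while keeping $k < l_{i+1}$; the compatibility $r_{i+1} \equiv r_i \pmod{l_i}$ is exactly what makes the residue conditions simultaneously solvable. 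This yields $J = (1)$ and hence $\mathfrak a = P^{-1}A$. The difficulty, relative to the two-generator situation handled directly by \cref{thetaprincipal}, is that here there are $n+1$ generators with potentially incompatible supports and a finitely generated fractional ideal need not equal its divisorial closure even over a UFD; the route through is precisely the observation that any family of $\Theta_{l,\sred}$ sharing a prime must come from a divisibility chain, after which the nested structure of that chain lets one solve the residue system explicitly.
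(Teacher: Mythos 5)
Your proof is correct, and it takes a genuinely different route from the paper's. The paper proceeds by induction on the number of generators: it sets $I_m = (\qbinom{n}{0}_{\sred}^{-1},\dotsc,\qbinom{n}{m}_{\sred}^{-1})$, explicitly guesses the generator $g_m^{-1}$ of $I_m$, and at each step reduces to a two-generator coprimality statement that it verifies through floor-function arithmetic and \cref{thetaprincipal}. You instead prove both inclusions at once: the easy inclusion $\mathfrak a \subseteq P^{-1}A$ from the valuation formula, and the hard inclusion by showing the ordinary ideal $J = (P/\qbinom{n}{0}_{\sred},\dotsc,P/\qbinom{n}{n}_{\sred})$ is all of $A$ via a localization-at-maximal-ideals argument. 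The structural observation --- that the set of $l$ with $\Theta_{l,\sred} \in \mathfrak{m}$ must form a divisibility chain, by \cref{thetabezout} --- replaces the paper's repeated appeals to \cref{thetaprincipal}, and the CRT-style construction of a witness index $k$ with $(k\bmod l_i) > (n\bmod l_i)$ along the chain (using $r_{i+1}\equiv r_i \pmod{l_i}$ and $r_i \le l_i - 2$) replaces the paper's step-by-step divisor bookkeeping. I checked the witness construction: writing $r_{i+1} = r_i + e l_i$, the choice $k_{i+1} = k_i + e l_i$ lands strictly between $r_{i+1}$ and $l_{i+1}$ and preserves all earlier residue conditions, so the induction along the chain closes. (Your parenthetical ``the unique multiple'' is a slight overstatement --- the interval $(r_{i+1}, l_{i+1})$ may contain more than one element of the arithmetic progression --- but only existence is needed.) Your approach is arguably more conceptual and makes it transparent where each hypothesis ($l \nmid n+1$, $l \le n$, the chain structure) enters; the paper's induction has the advantage of producing the intermediate generators $g_m^{-1}$ explicitly, which it then reuses in the proof of \cref{onecolvalbound}.
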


\begin{proof}
We follow a similar strategy as in the proof of \cref{qbinomideal}.
Let $I_m$ denote the fractional ideal generated by
\begin{equation*}
\qbinom{n}{0}_{\sred}^{-1},\qbinom{n}{1}_{\sred}^{-1}, \dotsc, \qbinom{n}{m}_{\sred}^{-1}
\end{equation*}
and let 
\begin{equation*}
g_m=\prod_{\substack{k|n-m+i \text{ for some } 1 \leq i \leq m\\ k\nmid n+1}} \Theta_{k,\sred} \text{.}
\end{equation*}
Suppose we have shown that $I_m$ is principal, generated by $g_m^{-1}$.
We will show that $I_{m+1}$ is principal, generated by $g_{m+1}^{-1}$.
It is enough to show that the fractional ideal generated by
\begin{equation*}
\qbinom{n}{m+1}_{\sred}^{-1},g_m^{-1}
\end{equation*}
is equal to the principal fractional ideal generated by $g_{m+1}^{-1}$.
This is equivalent to proving equality of the following (ordinary) ideals
\begin{equation}
\left(g_m,\qbinom{n}{m+1}_{\sred}\right)=\left(\frac{\qbinom{n}{m+1}_{\sred}}{h_m}\right) \label{eq:qbinominvideal-induction}
\end{equation}
of $A$, where 
\begin{equation*}
h_m=\frac{g_{m+1}}{g_m}=\prod_{\substack{k|n-m\\ k \nmid n-m+1, k \nmid n-m+2, \dotsc ,k \nmid n+1}} \Theta_{k,\sred} \text{.}
\end{equation*}
(In particular, this shows that $\qbinom{n}{m+1}_{\sred}$ divides $g_m h_m=g_{m+1}$.)

We first check that the ideal on the right-hand side of \eqref{eq:qbinominvideal-induction} is an ordinary ideal. 
If $\Theta_{k,\sred}$ divides $h_m$ (i.e.~if $k|n-m$ and $k \nmid n-m+i$ for all $1 \leq i \leq m+1$) then $k \nmid m+1$ and the fractional part of $(n-(m+1))/k$ is $(k-1)/k$.
This implies that
\begin{equation*}
\left\lfloor \frac{n}{k} \right\rfloor - \left\lfloor \frac{m+1}{k} \right\rfloor - \left\lfloor \frac{n-(m+1)}{k} \right\rfloor = 1
\end{equation*}
so $\Theta_{k,\sred}$ also divides $\qbinom{n}{m+1}_{\sred}$.

It is clear that $\qbinom{n}{m+1}_{\sred}/h_m$ divides $\qbinom{n}{m+1}_{\sred}$. 
Suppose $\Theta_{k,\sred}$ divides $\qbinom{n}{m+1}_{\sred}/h_m$. 
Since we can write
\begin{equation*}
\qbinom{n}{m+1}=\frac{[n]_{\sred} [n-1]_{\sred} \dotsm [n-m]_{\sred}}{[m+1]_{\sred} [m]_{\sred} \dotsm [1]_{\sred}} \text{,}
\end{equation*}
this implies that either $k\nmid n-m$ and $k|n-m+i$ for some $1 \leq i \leq m$, or $k|n-m$ and $k|n-m+i$ for some $1 \leq i \leq m+1$. 
In either case, it is easy to check that $k\nmid n+1$, for otherwise $n/k$ has fractional part $(k-1)/k$, so
\begin{equation*}
\left\lfloor \frac{n}{k} \right\rfloor - \left\lfloor \frac{m+1}{k} \right\rfloor - \left\lfloor \frac{n-(m+1)}{k} \right\rfloor = 0
\end{equation*}
and $\Theta_{k,\sred}$ cannot divide $\qbinom{n}{m+1}_{\sred}$. 
This shows that $\qbinom{n}{m+1}_{\sred}/h_m$ divides $g_m$.

We will now show that
\begin{equation*}
\left(\frac{g_{m+1}}{\qbinom{n}{m+1}_{\sred}},h_m\right)=(1)
\end{equation*}
using \cref{thetaprincipal}, from which \eqref{eq:qbinominvideal-induction} holds and the result follows. 
It is enough to show that for any $l,d>1$, we do not have $\Theta_{l,\sred}|g_{m+1}/\qbinom{n}{m+1}_{\sred}$ and $\Theta_{ld,\sred}|h_m$, or $\Theta_{ld,\sred}|g_{m+1}/\qbinom{n}{m+1}_{\sred}$ and $\Theta_{l,\sred}|h_m$.

Suppose first that $\Theta_{l,\sred}|g_{m+1}/\qbinom{n}{m+1}_{\sred}$ and $\Theta_{ld,\sred}|h_m$. 
Then $l \nmid n+1$ and $ld|n-m$, so $l|n-m$ and $l \nmid m+1$. 
This shows that the fractional part of $(n-(m+1))/l$ is $(l-1)/l$ and the fractional part of $(m+1)/l$ is non-zero, so
\begin{equation*}
\left\lfloor \frac{n}{l} \right\rfloor - \left\lfloor \frac{m+1}{l} \right\rfloor - \left\lfloor \frac{n-(m+1)}{l} \right\rfloor = 1
\end{equation*}
which contradicts $\Theta_{l,\sred}|g_{m+1}/\qbinom{n}{m+1}_{\sred}$.

Similarly, suppose that $\Theta_{ld,\sred}|g_{m+1}/\qbinom{n}{m+1}_{\sred}$ and $\Theta_{l,\sred}|h_m$. 
Then $l|n-m$ and $l \nmid n-m+i$ for $1 \leq i \leq m+1$, while $ld|n-m+i$ for some $0 \leq i \leq m$ and $ld \nmid n+1$.
The only way this can happen is if $ld|n-m$. 
This implies that $ld \nmid m+1$, and we similarly obtain
\begin{equation*}
\left\lfloor \frac{n}{ld} \right\rfloor - \left\lfloor \frac{m+1}{ld} \right\rfloor - \left\lfloor \frac{n-(m+1)}{ld} \right\rfloor = 1
\end{equation*}
which contradicts $\Theta_{ld,\sred}|g_{m+1}/\qbinom{n}{m+1}_{\sred}$.
\end{proof}

\section{Existence and rotatability}

Let $Q=\Frac A$ and $\overline{Q}=\Frac \overline{A}$.
Our goal in this section is to prove \cref{existence} by showing that the denominators of the coefficients of $\JW_Q(\prescript{}{\sred}{n})$ divide 
\begin{equation*}
\prod_{\substack{1 \leq k \leq n\\ k \nmid n+1}} \Theta_{k,\sred}
\end{equation*}
by comparing them with the coefficients of $\JW_{\overline{Q}}(n)$.
First, we prove the analogous statement for $\JW_{\overline{Q}}(n)$.

\begin{lem} \label{onecolvalbound}
Let $k>1$ be an integer, and let $D$ be a one-colored Temperley--Lieb diagram in $\TL_{\overline{Q}}(n)$.
Then
\begin{equation*}
\nu_k\left(\coeff_{{}\in \JW_{\overline{Q}}(n)} D \right) \geq -1 \text{,}
\end{equation*}
with equality only if $1<k \leq n$ and $k \nmid n+1$.
\end{lem}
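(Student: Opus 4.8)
The plan is to prove \cref{onecolvalbound} by induction on $n$, using the one-colored specialization of the coefficient recursion of \cref{JWcoef}. Since $\overline{Q}$ is a field and the one-colored quantum binomial coefficients are nonzero polynomials, hence invertible in $\overline{Q}$, $\JW_{\overline{Q}}(m)$ exists for every $m$ by \cref{existence-onecol}, and for a one-colored Temperley--Lieb diagram $D$ in $\TL_{\overline{Q}}(n+1)$ one has
\begin{equation*}
\coeff_{{}\in\JW_{\overline{Q}}(n+1)}D=\sum_i\frac{[i]}{[n+1]}\coeff_{{}\in\JW_{\overline{Q}}(n)}D_i,
\end{equation*}
where $i$ runs over the cap positions of $\hat D$ and each $D_i$ lies in $\TL_{\overline{Q}}(n)$. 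The base case $n=1$ is trivial since $\JW_{\overline{Q}}(1)=1$. Throughout one uses that $\nu_k([m])=1$ when $k\mid m$ and $=0$ otherwise, and that $\nu_k$ of a sum is at least the minimum of the $\nu_k$ of its terms.

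For the inductive step fix $k>1$ and $D$ in $\TL_{\overline{Q}}(n+1)$, and assume the statement for $\TL_{\overline{Q}}(n)$; in particular $\nu_k(\coeff_{{}\in\JW_{\overline{Q}}(n)}D_i)\geq -1$ for all $i$, with equality only if $k\leq n$ and $k\nmid n+1$. One argues by cases. If $k\mid n+1$, the inductive hypothesis forces $\nu_k(\coeff_{{}\in\JW_{\overline{Q}}(n)}D_i)\geq 0$, which compensates the factor $1/[n+1]$ and gives $\nu_k(\coeff_{{}\in\JW_{\overline{Q}}(n+1)}D)\geq -1$; since $k\mid n+1$ and $k>1$ force $k\leq n+1$ and $k\nmid n+2$, the equality clause needs nothing more. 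If $k\nmid n+1$ and $k>n+1$, then $[i]$ and $[n+1]$ contribute no power of $\Theta_k$ while the inductive hypothesis gives $\nu_k(\coeff_{{}\in\JW_{\overline{Q}}(n)}D_i)\geq 0$, so in fact $\nu_k(\coeff_{{}\in\JW_{\overline{Q}}(n+1)}D)\geq 0$. If $k\nmid n+1$, $k\leq n+1$ and $k\nmid n+2$, the termwise estimate gives $\nu_k(\coeff_{{}\in\JW_{\overline{Q}}(n+1)}D)\geq -1$, which the equality clause permits. The one genuinely delicate case is $k\mid n+2$ with $k\leq n$ (which, as $k>1$, also gives $k\nmid n+1$): here the termwise bound still only yields $\geq -1$, whereas one must show $\geq 0$.

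To treat this last case I would exhibit a cancellation. Group the terms of the recursion according to the distinct diagrams $E$ occurring among the $D_i$; the positions $i$ with $D_i=E$ form arithmetic progressions $i_0,i_0+2,\dotsc,i_0+2r$ of common difference $2$, since a maximal run of ``short'' caps of $\hat D$ (caps joining consecutive bottom points) is collapsed to produce $E$, and by the identity $[i_0]+[i_0+2]+\dotsb+[i_0+2r]=[r+1]\,[i_0+r]$ the contribution of $E$ equals $\bigl([r+1][i_0+r]/[n+1]\bigr)\coeff_{{}\in\JW_{\overline{Q}}(n)}E$. Passing to $\Theta_k$-adic localization (equivalently, to a root $\zeta$ of $\Theta_k$), each $\coeff_{{}\in\JW_{\overline{Q}}(n)}E$ has at worst a simple pole at $\zeta$ by the inductive hypothesis, so it suffices to show that whenever such a pole is actually present one has $k\mid r+1$ or $k\mid i_0+r$, so that $[r+1][i_0+r]$ vanishes at $\zeta$. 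This is the crux of the proof and the main obstacle: carrying it out requires strengthening the inductive hypothesis to a statement that, for every diagram realizing $\nu_k(\coeff)=-1$, records a congruence modulo $k$ constraining the lengths and the left endpoints of the maximal runs of short caps of that diagram—chosen so that enlarging a run by one cap (which is exactly what happens when passing from $E$ back to $\hat D$) forces one of $k\mid r+1$, $k\mid i_0+r$, using along the way the hypothesis $k\mid n+2$—and then verifying that this refined statement propagates through the recursion in each of the cases above. An alternative would be to bypass the recursion and read $\nu_k$ directly off an explicit closed formula for the coefficients of $\JW_{\overline{Q}}(n)$ (e.g.\ \cite[Proposition~4.1]{morrison-JW}), at the cost of a more involved combinatorial analysis of the diagram $D$.
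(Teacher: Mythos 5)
Your inductive argument for the inequality $\nu_k \geq -1$ matches the paper's: if $k\nmid m+1$ the factor $[i]/[m+1]$ contributes $\geq 0$ and the inductive hypothesis gives $\geq -1$; if $k\mid m+1$ the factor contributes $\geq -1$ but the inductive hypothesis forces the coefficient's valuation to be $\geq 0$. That part is fine. But your treatment of the equality clause has a genuine gap. You correctly isolate the delicate case $k\mid n+2$, $k\leq n$, and you propose a cancellation argument grouping the $D_i$ by the resulting diagram $E$ into arithmetic progressions and using the identity $[i_0]+[i_0+2]+\dotsb+[i_0+2r]=[r+1][i_0+r]$. However, the crux — that whenever $\coeff_{{}\in\JW_{\overline{Q}}(n)}E$ has a pole at a root of $\Theta_k$, one of $k\mid r+1$ or $k\mid i_0+r$ must hold — is exactly what you admit you have not proved; you only sketch a plan for a strengthened inductive hypothesis without formulating or verifying it. As written, the lemma is not established.

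The paper avoids the inductive cancellation entirely for the equality clause. It invokes \cref{existence-onecol} (the known one-colored existence criterion, \cite[Theorem~A.2]{el-univcox}) to conclude that $\JW(m+1)$ already exists over the subring $\overline{Q}_{\rm binom}=\ZZ[x][\qbinom{m+1}{r}^{-1}:0\le r\le m+1]$; by uniqueness of Jones--Wenzl projectors, all coefficients of $\JW_{\overline{Q}}(m+1)$ lie in $\overline{Q}_{\rm binom}$. Hence any $\Theta_k$ appearing in a denominator must divide the lcm of the $\qbinom{m+1}{r}$, and the one-color version of \cref{qbinominvideal} computes this lcm to be $\prod_{1\le r\le m+1,\ r\nmid m+2}\Theta_r$, giving $k\le m+1$ and $k\nmid m+2$ immediately. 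So the paper outsources the hard combinatorial content to a cited theorem plus the ideal computation of \cref{qbinominvideal}, rather than attempting the cancellation inside the recursion. If you want to keep your more elementary route you would need to actually state and prove the strengthened invariant on runs of short caps; otherwise the clean way to close the gap is to cite \cref{existence-onecol} together with \cref{qbinominvideal} as the paper does.
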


\begin{proof}
We proceed by induction. 
Suppose the result holds for $n=m$, and let $D$ be a one-colored Temperley--Lieb diagram in $\TL_{\overline{Q}}(m+1)$.
By the one-color version of \cref{JWcoef} we have
\begin{equation}
\begin{split}
\nu_k\left(\coeff_{{}\in \JW_{\overline{Q}}(m+1)} D \right)& =\nu_k\left(\sum_{\{i\}} \frac{[i]}{[m+1]}\coeff_{{}\in \JW_{\overline{Q}}(m)} D_i\right) \\
& \geq \min_{\{i\}} \left(\nu_k\left(\frac{[i]}{[m+1]}\right)+\nu_k\left(\coeff_{{}\in \JW_{\overline{Q}}(m)} D_i\right)\right) \text{.} 
\end{split}\label{eq:onecolvalbound}
\end{equation}
If $k \nmid m+1$, then $\nu_k([i]/[m+1]) \geq 0$ for any $i$ and $\nu_k(\coeff_{{}\in \JW_{\overline{Q}}(m)} D_i) \geq -1$.
On the other hand, if $k|m+1$, then $\nu_k([i]/[m+1]) \geq -1$ while $\nu_k(\coeff_{{}\in \JW_{\overline{Q}}(m)} D_i) \geq 0$.
In either case, the sum of the two valuations is at least $-1$, so the right-hand side of \eqref{eq:onecolvalbound} is at least $-1$.

Now suppose we have equality. 
By \cref{existence-onecol} the one-color Jones--Wenzl projector exists over the subring
\begin{equation*}
\overline{Q}_{\rm binom}=\ZZ[x]\left[\qbinom{m+1}{0}^{-1},\qbinom{m+1}{1}^{-1},\dotsc,\qbinom{m+1}{m+1}^{-1}\right] \text{.}
\end{equation*}
The natural embedding $\overline{Q}_{\rm binom} \subset \overline{Q}$ induces an embedding $\TL_{\overline{Q}_{\rm binom}}(m+1) \rightarrow \TL_{\overline{Q}}(m+1)$, and the image of $\JW_{\overline{Q}_{\rm binom}}(m+1)$ in $\TL_{\overline{Q}}(m+1)$ is clearly a Jones--Wenzl projector.
Since Jones--Wenzl projectors are unique, we conclude that the coefficients of $\JW_{\overline{Q}}(m+1)$ lie in $\overline{Q}_{\rm binom}$. 
In particular, if the $k$-valuation of any given coefficient is negative, then $\Theta_k$ must divide $\qbinom{m+1}{r}$ for some $0 \leq r \leq m+1$, so it must divide the least common multiple of $\qbinom{m+1}{0},\qbinom{m+1}{1},\dotsc,\qbinom{m+1}{m+1}$.
But a consequence of the one-color version of \cref{qbinominvideal} is that this least common multiple is
\begin{equation*}
g_{m+1}=\prod_{\substack{1 \leq r \leq m+1\\ r \nmid m+2}} \Theta_{r} \text{.}
\end{equation*}
So we must have $(\Theta_k,g_{m+1})=(\Theta_k)$, so $1 < k \leq m+1$ and $k \nmid m+2$ as required.
\end{proof}

Now let $A'=A[x]/(x^2-x_{\sred} x_{\tblu})$.
We view $A'$ as both an $A$-algebra and an $\overline{A}$-algebra in the obvious way. 
Writing $Q'=\Frac A'$, we have an isomorphism
\begin{equation*}
\begin{aligned}
\TL_{Q'}(n)& \longrightarrow 2\TL_{Q'}(\prescript{}{\sred}{n})\\
e_i & \longmapsto \begin{cases}
\frac{x}{x_{\sred}} e_i & \text{$i$ odd,} \\
\frac{x}{x_{\tblu}} e_i & \text{$i$ even,}
\end{cases}
\end{aligned}
\end{equation*}
which maps $\JW_{Q'}(n) \mapsto \JW_{Q'}(\prescript{}{\sred}{n})$.
So for any two-colored Temperley--Lieb diagram $D$, we have
\begin{equation*}
\coeff_{{}\in \JW_{Q}(\prescript{}{\sred}{n})} D=\coeff_{{}\in \JW_{Q'}(\prescript{}{\sred}{n})} D=x^a x_{\sred}^b x_{\tblu}^c \coeff_{{}\in \JW_{Q'}(n)} \overline{D}=x^a x_{\sred}^b x_{\tblu}^c \coeff_{{}\in \JW_{\overline{Q}}(n)} \overline{D}
\end{equation*}
for some integers $a,b,c$ for which $a+b+c=0$, where $\overline{D}$ denotes the one-color diagram obtained from $D$ by forgetting the coloring. 
It follows that when $k>2$ we have
\begin{align}
\nu_{k,\sred}\left(\coeff_{{}\in \JW_{Q}(\prescript{}{\sred}{n})} D \right)=\nu_{k,\tblu}\left(\coeff_{{}\in \JW_{Q}(\prescript{}{\sred}{n})} D \right)& =\nu_k\left(\coeff_{{}\in \JW_{\overline{Q}}(n)} \overline{D} \right) \text{,} \label{eq:valequalitynottwo} \\
\intertext{and}
\nu_{2,\sred}\left(\coeff_{{}\in \JW_{Q}(\prescript{}{\sred}{n})} D \right)+\nu_{2,\tblu}\left(\coeff_{{}\in \JW_{Q}(\prescript{}{\sred}{n})} D \right)& =\nu_k\left(\coeff_{{}\in \JW_{\overline{Q}}(n)} \overline{D} \right) \text{.} \label{eq:valequalitytwo}
\end{align}

\begin{lem} \label{twocolvalbound}
Let $k>1$ be an integer, and let $D$ be a two-colored Temperley--Lieb diagram in $2\TL_{Q}(\prescript{}{\sred}{n})$.
Then
\begin{equation*}
\nu_{k,\upur}\left(\coeff_{{}\in \JW_{Q}(\prescript{}{\sred}{n})} D \right) \geq -1 \text{,}
\end{equation*}
and if we have equality then $1 < k \leq n$ and $k \nmid n+1$, and $\upur=\sred$ if $k=2$.
\end{lem}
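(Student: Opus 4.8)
The plan is to reduce everything to the one-colored \cref{onecolvalbound} via the relations \eqref{eq:valequalitynottwo}--\eqref{eq:valequalitytwo}, which convert two-colored cyclotomic valuations into one-colored ones; we treat $k>2$ and $k=2$ separately. Throughout write $c=\coeff_{{}\in \JW_Q(\prescript{}{\sred}{n})} D$, let $\overline{D}\in \TL_{\overline{Q}}(n)$ be $D$ with its coloring forgotten, and put $\bar c=\coeff_{{}\in \JW_{\overline{Q}}(n)} \overline{D}$. If $k>2$ then $\Theta_{k,\sred}=\Theta_{k,\tblu}$, so \eqref{eq:valequalitynottwo} gives $\nu_{k,\upur}(c)=\nu_k(\bar c)$ for both colors $\upur$; hence $\nu_{k,\upur}(c)\geq -1$, with equality only if $1<k\leq n$ and $k\nmid n+1$, directly from \cref{onecolvalbound} applied to $\overline{D}$ (and the clause about $\upur$ is vacuous).

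Now let $k=2$. The first step is to show $\nu_{2,\tblu}(c)\geq 0$ by induction on $n$, using the recursion $c=\sum_i \tfrac{[i]_{\upur_i}}{[n+1]_{\sred}}\,\coeff_{{}\in \JW_Q(\prescript{}{\sred}{n})} D_i$ of \cref{JWcoef} (for $D$ in $2\TL_Q(\prescript{}{\sred}{(n+1)})$). The key point is that $x_\tblu=\Theta_{2,\tblu}$ never divides $[m]_{\sred}=\prod_{d\mid m}\Theta_{d,\sred}$: indeed $\Theta_{2,\sred}=x_\sred$ and the polynomials $\Theta_{d,\sred}=\Psi_d(x_\sred x_\tblu)$ for $d>2$ have nonzero constant term. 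Thus $\nu_{2,\tblu}([m]_{\sred})=0$ while $\nu_{2,\tblu}([m]_{\tblu})=[2\mid m]\geq 0$, so in the recursion the denominator contributes nothing to $\nu_{2,\tblu}$, each numerator contributes something $\geq 0$, and the inductive hypothesis bounds the remaining factor below by $0$; the base cases $n\leq 1$ are trivial. This already gives the ``$\upur=\sred$ if $k=2$'' clause.

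It remains to prove $\nu_{2,\sred}(c)\geq -1$, with equality forcing $2\leq n$ and $2\nmid n+1$ (that is, $n$ even). This is the part I expect to be the main obstacle, as a bare minimum-of-valuations estimate does not suffice. Writing $c=x^{\ell}x_\sred^{-p_o}x_\tblu^{-p_e}\bar c$ as in the paragraph preceding \eqref{eq:valequalitynottwo}, where $p_o$ and $p_e$ count the odd- and even-indexed generators in a reduced word for $D$ and $\ell=p_o+p_e$, and using that $\bar c(-x)=(-1)^{\ell}\bar c(x)$ (because each $[m]$ satisfies $[m](-x)=(-1)^{m-1}[m](x)$, equivalently by the sign automorphism of $2\TL$), one computes $\nu_{2,\sred}(c)=\tfrac12\bigl(\nu_2(\bar c)+p_e-p_o\bigr)$; so the claim is equivalent to $\nu_2(\bar c)\geq p_o-p_e-2$, which genuinely strengthens the bound $\nu_2(\bar c)\geq -1$ of \cref{onecolvalbound} whenever $p_o>p_e+1$. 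To prove it I would strengthen the induction along \cref{JWcoef}, establishing simultaneously that $\nu_{2,\sred}(c)\geq -1$ and that $\nu_{2,\sred}(c)\geq 0$ when $n$ is odd; the delicate step is the passage from even $n$ to odd $n+1$, where the summands indexed by odd $i$ can individually have $(2,\sred)$-valuation $-1$, so one must either exhibit a cancellation among them or carry a finer invariant recording which diagrams $D_i$ can attain $\nu_{2,\sred}=-1$ --- in effect reconstructing inside $2\TL_Q(\prescript{}{\sred}{n})$ the ``ring of definition'' information that the proof of \cref{onecolvalbound} imports from \cref{existence-onecol}. Once this is in hand the equality clause is immediate, since $\nu_{2,\sred}(c)\geq 0$ whenever $n$ is odd or $n\leq 1$.
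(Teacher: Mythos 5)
Your handling of $k>2$ via \eqref{eq:valequalitynottwo} and \cref{onecolvalbound} matches the paper, and your derivation of $\nu_{2,\tblu}\geq 0$ by the \cref{JWcoef} recursion (using that $\Theta_{2,\tblu}=x_{\tblu}$ divides no $[m]_{\sred}$, so the denominator $[n+1]_{\sred}$ contributes $0$ to $\nu_{2,\tblu}$) is also essentially the paper's. But then you diagnose the remaining obstruction accurately and stop there. The step you identify as delicate --- passing from even $n$ to odd $n+1$, where $\nu_{2,\sred}([n+1]_{\sred})=0$ and so every summand in the recursion can independently sit at $\nu_{2,\sred}=-1$ --- really does defeat any min-of-valuations estimate, and neither your proposed strengthened one-colored bound $\nu_2(\bar c)\geq p_o-p_e-2$ nor your proposed finer invariant is established. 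So the proof has a genuine gap at exactly the point you flag.

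The missing idea is a symmetry, not a sharper recursion. For $n$ odd, the assignment $e_i\mapsto e_{n-i}$, $x_{\sred}\leftrightarrow x_{\tblu}$ defines a $\ZZ$-algebra involution $\tau$ of $2\TL_{Q}(\prescript{}{\sred}{n})$: precisely because $n$ is odd, the reflection $i\mapsto n-i$ flips the parity of the generator index, which is compatible with swapping colors in the quadratic relations \eqref{eq:oddquadratic}--\eqref{eq:evenquadratic}. (On diagrams, $\tau$ reflects about a vertical axis and swaps the two colors.) Since $\tau$ permutes the $e_i$'s and fixes the coefficient of the identity, it fixes $\JW_Q(\prescript{}{\sred}{n})$, and since it swaps $\Theta_{2,\sred}$ and $\Theta_{2,\tblu}$ one gets
\begin{equation*}
\nu_{2,\sred}\Bigl(\coeff_{{}\in\JW_Q(\prescript{}{\sred}{n})}D\Bigr)=\nu_{2,\tblu}\Bigl(\coeff_{{}\in\JW_Q(\prescript{}{\sred}{n})}\tau(D)\Bigr)\geq 0
\end{equation*}
for all $D$, the last inequality being the $\tblu$-bound you already proved. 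This gives $\nu_{2,\sred}\geq 0$ at every odd level outright, which is exactly your target strengthening, and then the simple min-of-valuations estimate along \cref{JWcoef} (with the casework on the parity of $n$ and the color $\upur$) closes the induction without any cancellation analysis. In short: your reduction and your $\tblu$-bound are right, but the $\sred$-bound at odd levels should be obtained from this reflect-and-swap involution rather than by trying to push harder on the recursion.
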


\begin{proof}
By \cref{onecolvalbound} and \eqref{eq:valequalitynottwo} we need only concern ourselves with the case where $k=2$. 
We proceed by induction as in the proof of \cref{onecolvalbound}.
Suppose the result holds for $n=m$, and let $D$ be a two-colored Temperley--Lieb diagram in $\TL_{Q}(\prescript{}{\sred}{(m+1)})$.
By \cref{JWcoef} we have
\begin{equation}
\begin{split}
\nu_{2,\upur}\left(\coeff_{{}\in \JW_{Q}(\prescript{}{\sred}{(m+1)})} D \right)& =\nu_{2,\upur}\left(\sum_{\{i\}} \frac{[i]_{\vgrn}}{[m+1]_{\sred}}\coeff_{{}\in \JW_{Q}(\prescript{}{\sred}{m})} D_i\right) \\
& \geq \min_{\{i\}} \left(\nu_{2,\upur}\left(\frac{[i]_{{\vgrn}}}{[m+1]_{\sred}}\right)+\nu_{2,\upur}\left(\coeff_{{}\in \JW_{Q}(\prescript{}{\sred}{m})} D_i\right)\right) \text{.} 
\end{split}\label{eq:twocolvalbound}
\end{equation}
If $m$ is even, then for all $i$
\begin{equation*}
\nu_{2,\upur}\left(\frac{[i]_{{\vgrn}}}{[m+1]_{\sred}}\right) \geq 0 \qquad \text{ and } \qquad \nu_{2,\upur}\left(\coeff_{{}\in \JW_{Q}(\prescript{}{\sred}{m})} D_i\right) \geq -1
\end{equation*}
by induction.
If $\upur=\tblu$, then for all $i$ 
\begin{equation*}
\nu_{2,\tblu}\left(\frac{[i]_{{\vgrn}}}{[m+1]_{\sred}}\right) \geq 0 \qquad \text{ and } \qquad \nu_{2,\tblu}\left(\coeff_{{}\in \JW_{Q}(\prescript{}{\sred}{m})} D_i\right) \geq 0
\end{equation*}
since $\nu_{2,\tblu}[m+1]_{\sred}=0$ for all $m$ and because the $\tblu$-colored valuation is non-negative by induction.
On the other hand, if $m$ is odd and $\upur=\sred$, then for all $i$ 
\begin{equation*}
\nu_{2,\upur}\left(\frac{[i]_{{\vgrn}}}{[m+1]_{\sred}}\right) \geq -1 \qquad \text{ and } \qquad \nu_{2,\sred}\left(\coeff_{{}\in \JW_{\overline{Q}}(\prescript{}{\sred}{m})} D_i\right) \geq 0 \text{.}
\end{equation*}
In all cases, the sum of the two valuations is at least $-1$ (and at least $0$ in the case where $\upur=\tblu$), so the right-hand side of \eqref{eq:twocolvalbound} is at least $-1$.

Now suppose $\upur=\sred$ and the left-hand side of \eqref{eq:twocolvalbound} is $-1$ but $m$ is even.
There is an involution of $\ZZ$-algebras (or a ``color-swap-twisted'' $R$-algebra involution)
\begin{equation*}
\begin{aligned}
\tau: 2\TL_{Q}(\prescript{}{\sred}{(m+1)};[2]_{\sred},[2]_{\tblu})& \longrightarrow 2\TL_{Q}(\prescript{}{\sred}{(m+1)};[2]_{\sred},[2]_{\tblu})\\
[2]_{\sred} & \longmapsto [2]_{\tblu} \\
[2]_{\tblu} & \longmapsto [2]_{\sred} \\
e_i & \longmapsto e_{(m+1)-i}
\end{aligned}
\end{equation*}
For a diagram $D$, $\tau(D)$ is the diagram obtained by reflecting $D$ about a vertical axis and swapping colors. 
Clearly this involution fixes $\JW_Q(\prescript{}{\sred}{n})$, so we have 
\begin{equation*}
\nu_{2,\sred}\left(\coeff_{{}\in \JW_{Q}(\prescript{}{\sred}{(m+1)})} D \right)=\nu_{2,\tblu}\left(\coeff_{{}\in \JW_{Q}(\prescript{}{\sred}{(m+1)})} \tau(D) \right) \geq 0
\end{equation*}
which is a contradiction, and completes the proof.
\end{proof}

\begin{lem} \label{qbinominvcoef}
Let $n,k$ be integers with $0 \leq k \leq n$.
There exists a two-colored diagram $D$ such that $\coeff_{{}\in \JW_Q(\prescript{}{\sred}{n})} D=\qbinom{n}{k}_{\sred}^{-1}$.
\end{lem}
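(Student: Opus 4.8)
The plan is to exhibit, for each $0 \leq k \leq n$, an explicit two-colored Temperley--Lieb diagram $D$ whose coefficient in $\JW_Q(\prescript{}{\sred}{n})$ equals $\qbinom{n}{k}_{\sred}^{-1}$. The natural candidate is the diagram that, near one boundary (say the right), has $k$ ``turnback'' strands nested in the standard way and is otherwise as simple as possible --- in the one-colored setting this is the diagram whose coefficient is the classical $\qbinom{n}{k}^{-1}$, a fact going back to the work on the one-colored Jones--Wenzl projector (e.g.\ the recursions of Frenkel--Khovanov or Morrison). Concretely, I would let $\overline{D}$ be that one-colored diagram and let $D$ be the two-colored diagram obtained by putting the correct alternating coloring (leftmost region red) on top of $\overline{D}$. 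Then I would apply the comparison identity
\begin{equation*}
\coeff_{{}\in \JW_{Q}(\prescript{}{\sred}{n})} D = x^a x_{\sred}^b x_{\tblu}^c \coeff_{{}\in \JW_{\overline{Q}}(n)} \overline{D}
\end{equation*}
with $a+b+c=0$, established in the paragraph preceding \cref{twocolvalbound}: since $\coeff_{{}\in \JW_{\overline{Q}}(n)} \overline{D} = \qbinom{n}{k}^{-1}$, the right-hand side is $x^a x_{\sred}^b x_{\tblu}^c \qbinom{n}{k}^{-1}$, and I must check that this equals $\qbinom{n}{k}_{\sred}^{-1}$ exactly (not merely up to a monomial).

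To pin down the monomial correction I would use \eqref{eq:colordependence}: each one-colored quantum number $[m]$ differs from the two-colored $[m]_{\sred}$ (or $[m]_{\tblu}$) by an explicit power of $x_{\sred}$, $x_{\tblu}$ or $x = \sqrt{x_\sred x_\tblu}$ depending on the parity of $m$. So $\qbinom{n}{k}$, being a ratio of products of quantum numbers, differs from $\qbinom{n}{k}_{\sred}$ by a monomial in $x, x_{\sred}, x_{\tblu}$ that is completely determined by counting how many even versus odd quantum numbers occur in the numerator and denominator; that count is visible from $n$ and $k$ alone. The remaining task is to verify that the monomial $x^a x_{\sred}^b x_{\tblu}^c$ coming from the isomorphism $\TL_{Q'}(n) \to 2\TL_{Q'}(\prescript{}{\sred}{n})$ applied to the specific diagram $D$ matches, which amounts to computing $a$, $b$, $c$ from the combinatorics of $D$ (the number of odd-indexed versus even-indexed cups and caps, equivalently the coloring of the turnback regions). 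Since both monomial discrepancies are governed by the same parity bookkeeping, they will cancel, giving the claimed equality.

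The main obstacle is purely bookkeeping: one must choose the diagram $D$ carefully enough that the monomial $x^a x_\sred^b x_\tblu^c$ dictated by the coloring of $D$ agrees on the nose with the monomial relating $\qbinom{n}{k}$ and $\qbinom{n}{k}_{\sred}$. A clean way to sidestep a long case analysis is to work over $Q' = \Frac A'$ throughout: there $\qbinom{n}{k}_{\sred}$ and $\qbinom{n}{k}$ are literally related by multiplication by an explicit unit, and the diagram isomorphism is given by the simple substitution $e_i \mapsto (x/x_\sred)e_i$ or $(x/x_\tblu)e_i$. I would also remark that it suffices to treat one convenient coloring of the nested-turnback diagram, since the defining property (leftmost region red) forces the rest, and both $n$ even and $n$ odd are handled uniformly by this substitution. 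Finally, the edge cases $k=0$ and $k=n$ are trivial: $\qbinom{n}{0}_{\sred} = \qbinom{n}{n}_{\sred} = 1$ and the identity diagram has coefficient $1$ in $\JW_Q(\prescript{}{\sred}{n})$ by definition.
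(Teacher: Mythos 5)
Your reduction to the one-colored case is workable but genuinely different from the paper's argument, and it leaves the decisive step unverified. The paper applies the recursion of \cref{JWcoef} directly to the two-colored diagram $D$ with $k$ nested caps on the bottom left and $k$ nested cups on the top right: folding down the rightmost strand exposes exactly one removable cap, at position $k$, whose color $\upur$ always satisfies $[k]_{\upur}=[k]_{\sred}$ (by \eqref{eq:colordependence} when $k$ is odd; because the cap is red when $k$ is even), so the recursion telescopes to
\begin{equation*}
\frac{[k]_{\sred}[k-1]_{\sred}\dotsm[1]_{\sred}}{[n]_{\sred}[n-1]_{\sred}\dotsm[n-k+1]_{\sred}}=\qbinom{n}{k}_{\sred}^{-1}
\end{equation*}
with no monomial correction entering at all. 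By contrast, your route must reconcile two separate monomial discrepancies --- the factor $x^a x_{\sred}^b x_{\tblu}^c$ produced by evaluating the isomorphism $\TL_{Q'}(n)\to 2\TL_{Q'}(\prescript{}{\sred}{n})$ on a reduced word for $\overline{D}$, and the monomial relating $\qbinom{n}{k}$ to $\qbinom{n}{k}_{\sred}$ via \eqref{eq:colordependence} --- and you assert that ``they will cancel'' without computing either one. They do cancel (both are $1$ when $n$ is odd or $k$ is even, and both are $x/x_{\sred}$ when $n$ is even and $k$ is odd), but this cancellation \emph{is} the content of the lemma, so it cannot simply be asserted. You also lean on $\coeff_{{}\in \JW_{\overline{Q}}(n)}\overline{D}=\qbinom{n}{k}^{-1}$ as a known one-colored fact citing Frenkel--Khovanov and Morrison, but those sources use the opposite sign convention ($e_i^2=+[2]e_i$), under which the nested-turnback coefficient carries a sign $(-1)^k$; a sign translation is therefore required and should be stated. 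In short, your route is viable, but the two bookkeeping verifications you defer contain essentially all of the work, whereas the paper's direct two-colored induction sidesteps both.
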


\begin{proof}
Take $D$ to be the diagram with $k$ nested caps on the bottom left, $k$ nested cups on the top right, and all other strands connected from bottom to top. 
For example, if $n=5$ and $k=2$ we set
\begin{equation*}
D=\begin{gathered}
        \begin{tikzpicture}[xscale=.4,yscale=.2]
          \begin{scope}
            \clip (-3,3.5) rectangle (3,-3.5);
            \draw[fill=dgrmred] (2.5,4) rectangle (-4,-4);
            \draw[fill=dgrmblu] (4,-4) to (2,-3.5) to[out=90,in=-90] (-2,3.5) to (-2,4) to (-1,3.5) to[out=-90,in=180] (0.5,1.5) to[out=0,in=-90] (2,3.5) to (4,4) to (4,-4);
          \draw[fill=dgrmblu] (-2,-4) to (-2,-3.5) to[out=90,in=180] (-.5,-1.5) to[out=0,in=90] (1,-3.5) to (1,-4) to (0,-3.5) to[out=90,in=0] (-.5,-2.5) to[out=180,in=90] (-1,-3.5) to (-2,-4);
          \draw[fill=dgrmblu] (0,4) to (0,3.5) to[out=-90,in=180] (.5,2.5) to[out=0,in=-90] (1,3.5) to (0,4);          
       \end{scope}
       \draw[dashed] (-3,3.5) to (3,3.5);
       \draw[dashed] (-3,-3.5) to (3,-3.5);
     \end{tikzpicture}
\end{gathered}
\end{equation*}
The result follows by \cref{JWcoef} and induction on $n$.
\end{proof}

\begin{proof}[Proof of \cref{existence}]
Let
\begin{equation*}
T_R=\{f \in 2\TL_R(\prescript{}{\sred}{n}) : e_i f=0 \text{ for all } 1 \leq i \leq n-1\} \text{.}
\end{equation*}
In other words, $T_R$ is the (right) annihilator of the generators $e_1,\dotsc,e_{n-1}$. 
One can show that $\JW_R(\prescript{}{\sred}{n})$ exists if and only if there exists $f \in T_R$ for which $\coeff_{{} \in f} 1$ is invertible in $R$ (see e.g.~\cite[Exercise~9.25]{emtw} for the one-colored case). 
When this happens, $T_R=R\JW_R(\prescript{}{\sred}{n})$.

Clearly $\JW_Q(\prescript{}{\sred}{n})$ exists so $T_Q=Q\JW_Q(\prescript{}{\sred}{n})$. 
Thus $T_A$ is a free $A$-module of rank $1$, generated by $c\JW_Q(\prescript{}{\sred}{n}) \in \TL_A(\prescript{}{\sred}{n})$, where $c$ is the least common multiple of the denominators of the coefficients of $\JW_Q(\prescript{}{\sred}{n})$.
\Cref{twocolvalbound} implies that $c$ divides 
\begin{equation*}
g_n=\prod_{\substack{1 \leq k \leq n\\ k \nmid n+1}} \Theta_{k,\sred} \text{,}
\end{equation*}
while \cref{qbinominvcoef} and \cref{qbinominvideal} give $c=g_n$. 

Suppose $\qbinom{n}{k}_{\sred}$ is invertible in $R$ for all $0 \leq k \leq n$. 
Then $g_n$ is invertible in $R$ too by \cref{qbinominvideal}. 
Thus $T_R \geq R \otimes_A g_n \JW_Q(\prescript{}{\sred}{n})$ contains an element $f=1 \otimes g_n \JW_Q(\prescript{}{\sred}{n})$ for which $\coeff_{{} \in f} 1=g_n$ is invertible, so $\JW_R(\prescript{}{\sred}{n})$ exists.

Conversely, suppose $\JW_R(\prescript{}{\sred}{n})$ exists. 
We have $T_R=R\JW_R(\prescript{}{\sred}{n}) \geq R \otimes_A g_n\JW_Q(\prescript{}{\sred}{n})$, and thus $g_n\JW_R(\prescript{}{\sred}{n})=1 \otimes_A g_n\JW_Q(\prescript{}{\sred}{n})$.
But the coefficients of $g_n\JW_Q(\prescript{}{\sred}{n})$ (which lie in $A$) generate $(1)$ as an ideal of $A$ (again by \cref{qbinominvideal} and \cref{qbinominvcoef}), which directly implies that $g_n$ is invertible in $R$, and $\JW_R(\prescript{}{\sred}{n})=g_n^{-1} \otimes_A g_n \JW_Q(\prescript{}{\sred}{n})$.
\end{proof}

A consequence of the above computation is the aforementioned ``generic computation'' of coefficients of $\JW_R(\prescript{}{\sred}{n})$.

\begin{cor} \label{genericcoefs}
For each two-colored Temperley--Lieb diagram $D$ there are coprime elements $f_D,g_D \in A$ such that if $\JW_R(\prescript{}{\sred}{n})$ exists, the specialization of $g_D$ in $R$ is invertible for all $D$ and
\begin{equation*}
\coeff_{{} \in \JW_R(\prescript{}{\sred}{n})} D
\end{equation*}
is the specialization of $f_D/g_D$ in $R$. 
\end{cor}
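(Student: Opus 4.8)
The plan is to read this off directly from the proof of \cref{existence}, where the relevant integral structure has already been established. Since $A=\ZZ[x_{\sred},x_{\tblu}]$ is a UFD, for each two-colored Temperley--Lieb diagram $D$ (say on $n$ boundary points) we may write
\[
\coeff_{{}\in \JW_Q(\prescript{}{\sred}{n})} D=\frac{f_D}{g_D}
\]
with $f_D,g_D\in A$ coprime; this pins down $f_D$ and $g_D$ up to a common unit of $A$, which is harmless. These are the desired elements, and the remaining content is to verify the two assertions when $\JW_R(\prescript{}{\sred}{n})$ exists.

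First I would recall from the proof of \cref{existence} that the least common multiple $c$ of the denominators of the coefficients of $\JW_Q(\prescript{}{\sred}{n})$ is exactly $g_n=\prod_{1\leq k\leq n,\,k\nmid n+1}\Theta_{k,\sred}$ (the upper bound $c\mid g_n$ is \cref{twocolvalbound}, and equality comes from \cref{qbinominvcoef} together with \cref{qbinominvideal}). In particular $g_D\mid g_n$ for every $D$. Now suppose $\JW_R(\prescript{}{\sred}{n})$ exists. The same proof shows that the image of $g_n$ in $R$ is invertible; writing $g_n=g_D h_D$ with $h_D\in A$, the image of $g_D$ in $R$ is therefore invertible as well, with inverse the image of $h_D g_n^{-1}$. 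Hence the specialization of $f_D/g_D$ in $R$ makes sense.

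It then remains to identify this specialization with $\coeff_{{}\in \JW_R(\prescript{}{\sred}{n})} D$. Again by the proof of \cref{existence}, inside $R\otimes_A \TL_A(\prescript{}{\sred}{n})$ we have $\JW_R(\prescript{}{\sred}{n})=g_n^{-1}\otimes_A g_n\JW_Q(\prescript{}{\sred}{n})$, where $g_n\JW_Q(\prescript{}{\sred}{n})$ genuinely lies in $\TL_A(\prescript{}{\sred}{n})$ since its coefficients are elements of $A$. Taking the coefficient of $D$ shows that $\coeff_{{}\in \JW_R(\prescript{}{\sred}{n})} D$ is the image in $R$ of $g_n^{-1}\cdot (g_n/g_D)\cdot f_D=f_D/g_D$, which is precisely the specialization of $f_D/g_D$.

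I do not expect a genuine obstacle here: the corollary is essentially a repackaging of the ``generic denominator'' computation already carried out. The only points needing care are the divisibility $g_D\mid g_n$ (which is exactly \cref{twocolvalbound} combined with \cref{qbinominvideal}) and the elementary fact that a divisor of a unit is a unit; both are immediate.
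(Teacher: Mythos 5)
Your proof is correct and follows essentially the same route as the paper: it reads off $f_D$ and $g_D$ from the generic computation and the invertibility of $g_n$ established in the proof of \cref{existence}. In fact, your explicit identification $g_D = g_n/\gcd(f,g_n)$ (where $f = \coeff_{{}\in g_n\JW_Q(\prescript{}{\sred}{n})} D$) is cleaner than what the paper writes, which appears to contain a slip (the paper sets $g_D=\gcd(f,g_n)$, which does not make $f_D/g_D$ equal to $f/g_n$).
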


\begin{rem}
We consider generic computation of the coefficients of one-colored Jones--Wenzl projectors (at least for subrings of $\CC$) to be mathematical folklore, i.e.~a ``known'' result without a published proof. 
In \cite[Theorem~6.13]{ew-localizedcalc} Elias--Williamson carefully prove an analogous result under the assumption that $R$ is both an integral domain and a henselian local ring.
Our proof does not require any restrictions on $R$ but is essentially equivalent to \cref{existence}.
\end{rem}

\begin{proof}
From the proof of \cref{existence}, if $\JW_R(\prescript{}{\sred}{n})$ exists then $g_n$ is invertible in $R$, and 
\begin{equation*}
\coeff_{{} \in \JW_R(\prescript{}{\sred}{n})} D = \coeff_{{} \in g_n^{-1} \otimes_A g_n\JW_Q(\prescript{}{\sred}{n})} D \text{.}
\end{equation*}
Set $f=\coeff_{{} \in g_n\JW_Q(\prescript{}{\sred}{n})}$, and take $f_D=f/\gcd(f,g_n)$ and $g_D=\gcd(f,g_n)$.
\end{proof}


For $f \in Q$, say that $f$ exists in $R$ if there are $a,b \in A$ with $f=a/b$ and $b$ invertible in $R$.

\begin{lem}
Suppose $\JW_R(\prescript{}{\sred}{n})$ exists. 
Then $\frac{[n+1]_{\sred}}{[k]_{\sred}}$ exists in $R$ for any integer $1 \leq k \leq n+1$.
\end{lem}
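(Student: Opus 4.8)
The plan is to compute the cyclotomic valuations $\nu_{l,\sred}([n+1]_{\sred}/[k]_{\sred})$ for every $l>1$ and show that whenever this valuation is negative, the corresponding cyclotomic part $\Theta_{l,\sred}$ is already invertible in $R$ — so that writing $[n+1]_{\sred}/[k]_{\sred} = a/b$ with $b$ a product of the ``bad'' $\Theta_{l,\sred}$, the denominator $b$ is invertible in $R$. The assumption that $\JW_R(\prescript{}{\sred}{n})$ exists is exploited via \cref{existence}: it tells us that $\qbinom{n}{j}_{\sred}$ is invertible in $R$ for all $0 \leq j \leq n$, equivalently (via \cref{qbinominvideal}) that $g_n = \prod_{1 \leq j \leq n,\ j \nmid n+1} \Theta_{j,\sred}$ is invertible in $R$.

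First I would record that $\nu_{l,\sred}$ of the quotient is $\lfloor (n+1)/l\rfloor - \lfloor k/l \rfloor$ using $\nu_{l,\sred}[m]_{\sred} = 1$ iff $l \mid m$. This is negative precisely when $l \mid k$ but $l \nmid n+1$ (and in that case the valuation equals $-1$, since $0 \leq \lfloor(n+1)/l\rfloor - \lfloor k/l\rfloor$ would otherwise force a contradiction with $l \le k \le n+1$ — more carefully, $l\mid k$ and $l\nmid n+1$ give $\lfloor(n+1)/l\rfloor = \lfloor (n+1-k)/l\rfloor + k/l$ plus a correction of $-1$ from the non-divisibility, so the difference is exactly $-1$). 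So the only primes-to-worry-about are those $l$ with $l \mid k$ and $l \nmid n+1$; for such $l$ we have $1 < l \leq k \leq n$, hence $\Theta_{l,\sred}$ is one of the factors of $g_n$ and is therefore invertible in $R$. Assembling these, $[n+1]_{\sred}/[k]_{\sred} = a/b$ where $b = \prod_{l \mid k,\ l \nmid n+1} \Theta_{l,\sred}$ divides $g_n$, so $b$ is invertible in $R$; I should also address the $l=2$ case separately, since $\Theta_{2,\sred}=x_{\sred}$ is colored, but the same divisibility bookkeeping applies and $2 \leq k \leq n$, $2 \nmid n+1$ again puts $\Theta_{2,\sred}$ among the factors of $g_n$.

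The main obstacle I anticipate is the $l=2$ subtlety together with being careful about exactly which cyclotomic parts appear with negative valuation and confirming each such $l$ genuinely satisfies $l \leq n$ (not just $l \leq n+1$): if $l \mid k$ and $l \nmid n+1$ then in particular $l \neq n+1$, and since $l \leq k \leq n+1$ we get $l \leq n$, so $\Theta_{l,\sred}$ divides $g_n$. Once that is pinned down the argument is just a reorganization of already-established facts (\cref{existence}, \cref{qbinominvideal}, and the valuation formula from the proof of \cref{valbinom}), so no new ideas are required — the content is entirely in matching the floor-function bookkeeping for $[n+1]_{\sred}/[k]_{\sred}$ against the divisibility pattern defining $g_n$.
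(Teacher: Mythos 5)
Your proposal is correct and takes essentially the same route as the paper: write $[n+1]_{\sred}/[k]_{\sred}$ as a ratio of products of cyclotomic parts $\Theta_{l,\sred}$, observe that after cancellation the denominator is $\prod_{l\mid k,\ l\nmid n+1}\Theta_{l,\sred}$, note that each such $l$ satisfies $1<l\leq n$ and $l\nmid n+1$ so that the denominator divides $g_n=\prod_{1\leq l\leq n,\ l\nmid n+1}\Theta_{l,\sred}$, and conclude via \cref{existence} and \cref{qbinominvideal} that $g_n$ is invertible in $R$. Your worry about $l=2$ is unnecessary: the decomposition $[m]_{\sred}=\prod_{l\mid m}\Theta_{l,\sred}$ is already uniformly $\sred$-colored (including $\Theta_{2,\sred}=x_{\sred}$), and $g_n$ is built from the $\sred$-colored cyclotomic parts as well, so no color mismatch arises.
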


\begin{proof}
We have
\begin{equation*}
\frac{[n+1]_{\sred}}{[k]_{\sred}}=\frac{\prod_{l|n+1} \Theta_{l,\sred}}{\prod_{l|k} \Theta_{l,\sred}}=\frac{\prod_{\substack{l|n+1\\ l \nmid k}} \Theta_{l,\sred}}{\prod_{\substack{l|k\\ l \nmid n+1}} \Theta_{l,\sred}} \text{,}
\end{equation*}
and the denominator of the right-hand side divides
\begin{equation*}
\prod_{\substack{1 < l \leq n\\ l \nmid n+1}} \Theta_{l,\sred}
\end{equation*}
which is invertible by \cref{existence} and \cref{qbinominvideal}.
\end{proof}

\begin{prop} \label{simple-rotatability}
Suppose the two-colored Jones--Wenzl projectors $\JW_R(\prescript{}{\sred}{n})$ and $\JW_R(\prescript{}{\tblu}{n})$ exist.
Then $\JW_R(\prescript{}{\sred}{n})$ is rotatable if and only if $\frac{[n+1]_{\sred}}{[k]_{\sred}}=0$ for all integers $1 \leq k \leq n$.
\end{prop}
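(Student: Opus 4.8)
The plan is to compute $\pTr(\JW_R(\prescript{}{\sred}{n}))$ by specializing the generic partial trace over $Q=\Frac A$ and then to detect its vanishing from a small number of coefficients. Recall from the discussion around \eqref{eq:genericpTr} that $\JW_R(\prescript{}{\sred}{n})$ is rotatable if and only if $\pTr(\JW_R(\prescript{}{\sred}{n}))=0$. By the proof of \cref{existence}, $g_n\JW_Q(\prescript{}{\sred}{n})$ lies in $2\TL_A(\prescript{}{\sred}{n})$, the element $g_n$ is invertible in $R$, and $\JW_R(\prescript{}{\sred}{n})$ is the specialization of $g_n\JW_Q(\prescript{}{\sred}{n})$ divided by $g_n$. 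Since $\pTr$ is defined over $\ZZ[x_{\sred},x_{\tblu}]$ it commutes with specialization, so $\pTr(\JW_R(\prescript{}{\sred}{n}))=0$ if and only if $\pTr(g_n\JW_Q(\prescript{}{\sred}{n}))=g_n\pTr(\JW_Q(\prescript{}{\sred}{n}))$ specializes to $0$ in $2\TL_R(\prescript{}{\sred}{(n-1)})$. As $\JW_Q(\prescript{}{\sred}{n})$ and $\JW_Q(\prescript{}{\sred}{(n-1)})$ both exist, \eqref{eq:genericpTr} identifies this element up to sign with $E:=g_n\frac{[n+1]_{\sred}}{[n]_{\sred}}\JW_Q(\prescript{}{\sred}{(n-1)})\in 2\TL_A(\prescript{}{\sred}{(n-1)})$.

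Next I would set $P:=g_n\frac{[n+1]_{\sred}}{[n]_{\sred}}g_{n-1}^{-1}\in Q$, where $g_{n-1}=\prod_{1\le k\le n-1,\ k\nmid n}\Theta_{k,\sred}$, so that $E=P\cdot\bigl(g_{n-1}\JW_Q(\prescript{}{\sred}{(n-1)})\bigr)$. For each $1\le k\le n$, \cref{qbinominvcoef} applied with $n-1$ in place of $n$ furnishes a diagram $D_k$ with $\coeff_{{}\in\JW_Q(\prescript{}{\sred}{(n-1)})}D_k=\qbinom{n-1}{k-1}_{\sred}^{-1}$, so $\coeff_{{}\in E}D_k=Pg_{n-1}\qbinom{n-1}{k-1}_{\sred}^{-1}$. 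These elements lie in $A$, and by \cref{qbinominvideal} (again for $n-1$) the fractional ideal they generate equals $Pg_{n-1}\cdot g_{n-1}^{-1}A=PA$; in particular $P\in A$ and the $\coeff_{{}\in E}D_k$ generate the principal ideal $(P)$. Hence the specialization of $E$ vanishes in $2\TL_R(\prescript{}{\sred}{(n-1)})$ if and only if $P$ specializes to $0$ in $R$, and this happens if and only if every $\coeff_{{}\in E}D_k$ specializes to $0$ in $R$.

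It then remains to identify each $\coeff_{{}\in E}D_k$ with $\frac{[n+1]_{\sred}}{[k]_{\sred}}$ up to an invertible factor. Using the absorption identity $[n]_{\sred}\qbinom{n-1}{k-1}_{\sred}=[k]_{\sred}\qbinom{n}{k}_{\sred}$ (immediate from the definition of the two-colored quantum binomial coefficients) one gets
\begin{equation*}
\coeff_{{}\in E}D_k=g_n\,\frac{[n+1]_{\sred}}{[k]_{\sred}}\,\qbinom{n}{k}_{\sred}^{-1}.
\end{equation*}
Since $\JW_R(\prescript{}{\sred}{n})$ exists, $g_n$ and $\qbinom{n}{k}_{\sred}$ are invertible in $R$ by \cref{existence}, and $\frac{[n+1]_{\sred}}{[k]_{\sred}}$ exists in $R$ by the preceding lemma; so $\coeff_{{}\in E}D_k$ specializes to $0$ in $R$ exactly when $\frac{[n+1]_{\sred}}{[k]_{\sred}}$ does. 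Combining this with the previous paragraph yields the claim.

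The step I expect to require the most care is the ideal-theoretic bookkeeping of the middle paragraph: one must verify both that the coefficients of $E$ at the special diagrams $D_k$ already generate the whole ideal $(P)$ — so that testing vanishing on these $n$ diagrams is enough to force $P=0$ in $R$, hence $\pTr(\JW_R(\prescript{}{\sred}{n}))=0$ — and that no denominators are lost when passing from $Q$ to $R$. Both points rest on \cref{qbinominvideal} together with the fact that the relevant products of cyclotomic parts are squarefree, so they should be routine once set up carefully, but they are the crux of the argument.
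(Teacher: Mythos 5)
Your proposal is correct and takes essentially the same approach as the paper: both reduce rotatability to the vanishing of the coefficients of the generic partial trace via \eqref{eq:genericpTr}, use the special diagrams from \cref{qbinominvcoef} to test those coefficients, apply the absorption identity $[n]_{\sred}\qbinom{n-1}{k-1}_{\sred}=[k]_{\sred}\qbinom{n}{k}_{\sred}$ to rewrite $\frac{[n+1]_{\sred}}{[n]_{\sred}}\qbinom{n-1}{k-1}_{\sred}^{-1}$ as $\frac{[n+1]_{\sred}}{[k]_{\sred}}\qbinom{n}{k}_{\sred}^{-1}$, and invoke the invertibility of $\qbinom{n}{k}_{\sred}$ from \cref{existence}. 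You are somewhat more explicit than the paper about the ideal-theoretic bookkeeping (the elements $E$ and $P$ and the appeal to \cref{qbinominvideal}), which is a reasonable way to make precise the paper's more casual remark that each coefficient of $\JW_Q(\prescript{}{\sred}{(n-1)})$ is a sum of fractions $a\qbinom{n-1}{k}_{\sred}^{-1}$, but the mathematical content is identical.
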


\begin{proof}
Calculating generically, we have
\begin{equation*}
\pTr(\JW_Q(\prescript{}{\sred}{n}))=-\frac{[n+1]_{\sred}}{[n]_{\sred}}\JW_Q(\prescript{}{\sred}{(n-1)})
\end{equation*}
by \eqref{eq:genericpTr}.
From the proof of \cref{genericcoefs} the coefficients of $\JW_Q(\prescript{}{\sred}{(n-1)})$ can be written as sums of fractions of the form $a\qbinom{n-1}{k}_{\sred}^{-1}$ for some $a \in A$ and some integer $0 \leq k \leq n-1$. 
Now observe that
\begin{equation*}
-\frac{[n+1]_{\sred}}{[n]_{\sred}}\frac{a}{\qbinom{n-1}{k}_{\sred}}=-\frac{[n+1]_{\sred}[k]_{\sred}! a}{[n]_{\sred}[n-1]_{\sred} \dotsm [n-k]_{\sred}}=-\frac{[n+1]_{\sred}}{[k+1]_{\sred}}\frac{a}{\qbinom{n}{k+1}_{\sred}}
\end{equation*}
noting that since $\JW_R(\prescript{}{\sred}{n})$ exists, $\qbinom{n}{k+1}_{\sred}$ is invertible. 
Thus $\JW_R(\prescript{}{\sred}{n})$ is rotatable if $\frac{[n+1]_{\sred}}{[k+1]_{\sred}}=0$.
Conversely, by \cref{qbinominvcoef} there is a a diagram whose coefficient in $\JW_Q(\prescript{}{\sred}{(n-1)})$ is exactly $\qbinom{n-1}{k}_{\sred}^{-1}$, so the above calculation shows that rotatability implies $\frac{[n+1]_{\sred}}{[k+1]_{\sred}}=0$.
\end{proof}

\begin{proof}[Proof of \cref{existsrotates}]
The condition on quantum binomial coefficients is the same as \cite[Assumption~1.1]{abe-homBS}. 
By \cite[Proposition~3.4]{abe-homBS} this implies that the quantum binomial coefficients $\qbinom{n}{k}_{\sred}$ and $\qbinom{n}{k}_{\tblu}$ are all invertible. 
Since
\begin{equation}
\qbinom{n+1}{k}_{\sred}=\frac{[n+1]_{\sred}}{[k]_{\sred}}\qbinom{n}{k-1}_{\sred} \label{eq:binomcoefincrement}
\end{equation}
and similarly for $\tblu$, we conclude that $\frac{[n+1]_{\sred}}{[k]_{\sred}}=\frac{[n+1]_{\tblu}}{[k]_{\tblu}}=0$ for all integers $1 \leq k \leq n$. 
Conversely, if the two-colored Jones--Wenzl projectors exist and are rotatable, then \eqref{eq:binomcoefincrement} combined with \cref{simple-rotatability} shows that $\qbinom{n+1}{k}_{\sred}$ and $\qbinom{n+1}{k}_{\tblu}$ vanish for all integers $1 \leq k \leq n$.
\end{proof}

\section{Applications to the Hecke category}

The diagrammatic Hecke category $\mathcal{H}$ of Elias--Williamson is constructed from a reflection representation of a Coxeter group called a \defnemph{realization}.
For each finite parabolic dihedral subgroup they identify a corresponding two-colored Temperley--Lieb algebra, whose defining parameters depend on the realization \cite[\S 5.2]{ew-soergelcalc}.
In \cite[\S 5]{ew-localizedcalc} Elias--Williamson highlight some hidden assumptions about their realizations from \cite{ew-soergelcalc}.
Their most basic assumption (without which the diagrammatic Hecke category is not well defined) is that certain two-colored Jones--Wenzl projectors exist and are rotatable. 
For the benefit of future work we give a corrected definition of a realization (which we call an \defnemph{Abe realization}) that ensures the existence and rotatability of these Jones--Wenzl projectors.

\begin{defn} \label{realizcorrected}
Let $\Bbbk$ be an integral domain.
An \defnemph{Abe realization} of a Coxeter system $(W,S)$ over $\Bbbk$ consists of a free, finite rank $\Bbbk$-module $V$ along with subsets 
\begin{align*}
\{\alpha_s : s \in S\} & \subset V &  \{\alpha_s^\vee : s \in S\} \subset V^\ast=\Hom_\field(V,\field)
\end{align*}
 such that
\begin{enumerate}[label=(\roman*)]
\item $\langle \alpha_s^\vee,\alpha_s \rangle=2$ for all $s \in S$;

\item the assignment 
\begin{equation*}
s(\beta)=\beta-\langle \alpha_s^\vee, \beta \rangle \alpha_s
\end{equation*} 
for all $s \in S$ and $\beta \in V$ defines a representation of the Coxeter group $W$ on $V$; 

\item \label{item:abecond} for all distinct $s,t \in S$ such that $st$ has order $m_{st}<\infty$, we have
\begin{equation*}
\qbinom{m_{st}}{k}_{\sred}(\langle \alpha_s^\vee,\alpha_t\rangle, \langle \alpha_t^\vee,\alpha_s\rangle)=\qbinom{m_{st}}{k}_{\tblu}(\langle \alpha_s^\vee,\alpha_t\rangle, \langle \alpha_t^\vee,\alpha_s\rangle)=0
\end{equation*}
for all integers $1 \leq k \leq m_{st}-1$.
\end{enumerate}
\end{defn}

By \cref{existsrotates}, condition \ref{item:abecond} above is equivalent to the existence and rotatability of $\JW_{\Bbbk}(\prescript{}{\sred}{(m_{st}-1)})$ and $\JW_{\Bbbk}(\prescript{}{\tblu}{(m_{st}-1)})$ for $[2]_{\sred}=\langle \alpha_s^\vee,\alpha_t\rangle$ and $[2]_{\tblu}=\langle \alpha_t^\vee,\alpha_s\rangle$. 
Thus \cref{Hwelldefined} follows from the discussion in \cite[\S 5.1]{ew-localizedcalc}. 
Moreover, condition \ref{item:abecond} is exactly Abe's assumption \cite[Assumption~1.1]{abe-homBS}, so \cref{abeequalsew} immediately follows by Abe's results \cite[Theorem~3.9]{abe-homBS} and \cite[Theorem~5.9]{abe-bimodhecke}.
It is also equivalent to 
\begin{equation} \label{eq:minpolycond}
\begin{aligned}
\Psi_{m_{st}}(\langle \alpha_s^\vee,\alpha_t \rangle \langle \alpha_t^\vee,\alpha_s \rangle)& =0 & &\text{if $m_{st}>2$,} \\
\langle \alpha_s^\vee,\alpha_t \rangle=\langle \alpha_t^\vee, \alpha_s^\vee \rangle& =0 & & \text{if $m_{st}=2$,}
\end{aligned}
\end{equation}
by \cref{qbinomideal}.

\begin{rem} \label{heckecategorification}
Abe's results assume that $\mathcal{H}$ categorifies the Hecke algebra. 
This only necessitates the additional assumption in \cref{abeequalsew} of \defnemph{Demazure surjectivity} \cite[Assumption~3.9]{ew-soergelcalc} for the realization $V$ \cite[\S\S 5.2--5.3]{ew-localizedcalc}.
This is a mild condition, and in particular holds if $2 \in \Bbbk^\times$. 
\end{rem}

\begin{rem} \label{H3defined}
There is a longstanding gap in the literature in defining the diagrammatic Hecke category for Coxeter groups containing a parabolic subgroup of type $H_3$. 
The diagrammatic Hecke category is currently not defined in such cases, because a crucial relation (the $H_3$ Zamolodchikov relation \cite[(5.12)]{ew-soergelcalc}) is incomplete.
One can argue that such a relation must exist in Abe's category, but explicitly determining this relation seems to be beyond current computational capabilities --- for further discussion see \cite[Remark~5.4]{ew-soergelcalc} and \cite[\S 3.6]{ew-localizedcalc}. 
Assuming such a relation can be found, it seems likely that \cref{Hwelldefined} would hold in this case as well.
\end{rem}

\begin{rem}
In \cite{ew-soergelcalc} Elias--Williamson incorrectly state that
\begin{equation} \label{eq:ewtechnicalcondition}
[m_{st}]_{\sred}(\langle \alpha_s^\vee,\alpha_t\rangle, \langle \alpha_t^\vee,\alpha_s\rangle)=[m_{st}]_{\tblu}(\langle \alpha_s^\vee,\alpha_t\rangle, \langle \alpha_t^\vee,\alpha_s\rangle)=0
\end{equation}
is enough to ensure the existence and rotatability of $\JW_\Bbbk(\prescript{}{\sred}{(m_{st}-1)})$.
(This error was identified in \cite{ew-localizedcalc} but only partially resolved there.)
In the same paper Elias--Williamson also incorrectly state that \eqref{eq:ewtechnicalcondition} is equivalent to \eqref{eq:minpolycond}.
Amusingly, when these two statements are combined these errors accidentally cancel and the resulting statement is equivalent to \cref{Hwelldefined}!
\end{rem}

\printbibliography

\end{document}